\DeclareMathOperator{\ad}{ad}
\DeclareMathOperator{\dom}{dom}
\DeclareMathOperator{\tr}{tr}
\DeclareMathOperator{\Ann}{Ann}
\DeclareMathOperator{\adj}{adj}
\DeclareSymbolFont{cyrillic}{T2A}{cmr}{m}{n}
\DeclareMathSymbol{\mTse}{\mathord}{cyrillic}{214}
\pgfplotsset{compat = newest}
\newcommand{\R}{\mathbb{R}}
\newcommand{\Z}{\mathbb{Z}}
\newcommand{\sm}{\setminus}
\newcommand{\smm}{\!\sm\!}
\newcommand{\set}[1]{\left\{#1\right\}}
\renewcommand{\t}[1]{\text{#1}}
\newcommand{\mf}[1]{\mathfrak{#1}}
\newcommand{\Id}{\t{Id}}
\newcommand{\p}{\partial}
\newcommand{\ds}{\displaystyle}
\DeclareMathOperator{\codim}{codim}
\newcommand{\nsea}{\text{\scalebox{0.9}{\rotatebox[origin=c]{15}{$\searrow$}}}}
\definecolor{purple1}{RGB}{130,45,92}
\definecolor{gold2}{RGB}{149,149,0}
\definecolor{gray1}{RGB}{147, 147, 147}
\definecolor{gray2}{RGB}{166, 166, 166}
\definecolor{gray3}{RGB}{185, 185, 185}
\definecolor{gray4}{RGB}{204, 204, 204}
\definecolor{MariaBlue}{RGB}{74, 144, 226}
\definecolor{MariaGreen}{RGB}{80, 227, 194}
\colorlet{MariaGreen2}{MariaGreen!75!black}
\newsavebox{\foobox}
\definecolor{rose}{RGB}{128,0,0}
\definecolor{roseyellow}{RGB}{222,205,99}
\definecolor{roseblue}{RGB}{167,188,214}
\definecolor{rosenavy}{RGB}{79,117,139}
\definecolor{roseorange}{RGB}{232,119,34}
\definecolor{rosegreen}{RGB}{61,68,30}
\definecolor{rosewhite}{RGB}{223,209,167}
\definecolor{rosegreen}{RGB}{61,68,30}
\setlist[enumerate]{leftmargin=.5in}
\setlist[itemize]{leftmargin=.5in}
\crefname{hypothesis}{Hypothesis}{Hypotheses}
\title{A Study of the Long-Term Behavior of Hybrid Systems with Symmetries via Reduction and the Frobenius-Perron Operator\thanks{
\funding{This work was funded by the NSF grant DMS-1645643 and AFOSR Award No. MURI FA9550-32-1-0400.}}}
\author{Maria Oprea\thanks{Center of Applied Mathematics, Cornell University, Ithaca, NY 
  (\email{mao237@cornell.edu}).}
\and Aden Shaw\thanks{Department of Mathematics, Rose-Hulman Institute of Technology, Terre Haute, IN 
	(\email{shawap@rose-hulman.edu}).}
\and Robi Huq\thanks{Department of Mathematics, University of Minnesota Twin Cities, Minneapolis, MN 
  (\email{huq00011@umn.edu}).}
\and Kaito Iwasaki\thanks{Department of Mathematics, University of Michigan, Ann Arbor, MI
	(\email{kaitoi@umich.edu}).}
\and Dora Kassabova\thanks{Department of Mathematics, University of Washington, Seattle, WA
	(\email{dmk285@uw.edu}).}
 \and \newline William Clark\thanks{Department of Mathematics, Ohio University, Athens, OH (\email{clarkw3@ohio.edu})}
}
\begin{document}
\nolinenumbers
\maketitle

\begin{abstract}
Hybrid dynamical systems are systems which undergo both continuous and discrete transitions. As typical in dynamical analysis, an essential goal is to study the long-term behavior of these systems. 
In this work, we present two different novel approaches for studying these systems. The first approach is based on constructing an analog of the Frobenius-Perron (transport) operator for hybrid systems. 
Rather than tracking the evolution of a single trajectory, this operator encodes the asymptotic nature of an ensemble of trajectories.
The second approach presented applies to an important subclass of hybrid systems, mechanical impact systems. We develop an analog of Lie-Poisson(-Suslov) reduction for left-invariant impact systems on Lie groups. In addition to the Hamiltonian (and constraints) being left-invariant, the impact surface must also be a right coset of a normal subgroup. This procedure allows a reduction from a $2n$-dimensional system to an $(n+1)$-dimensional one. 
We conclude the paper by presenting numerical results on 
a diverse array of applications.
\end{abstract}

\begin{keywords}
Hybrid Systems, Hybrid Reduction, Transfer Operator
\end{keywords}

\begin{MSCcodes}
34A38, 34D05, 37C30, 37C83, 70F25
\end{MSCcodes}
%

\section{Introduction}

A characteristic pursuit in dynamical systems is to study and understand their long-term behavior. One approach to understand a system's asymptotic properties is through a probabilistic lens. If the dynamics preserve a probability measure, then the celebrated Poincar\'{e} recurrence and ergodic theorems can be applied. These results are well-studied in the cases where the evolution is either discrete (an iterated map) or continuous (a flow induced by a differential equation) \cite{modern_theory_of_dyn_sys}; however, many real world phenomena are described by systems that fail to be exclusively discrete nor continuous \cite{HDS_magazine}. Consider the pedagogical example of a bouncing ball: as it flies through the air, its motion is continuous (ballistic motion is described by a differential equation), but when it hits the ground there is an instantaneous change in momentum (by applying the so-called impact map), see Figure \ref{fig:BouncingBalls}. We call such systems \textit{hybrid systems}---dynamical systems whose evolution are subject to \textit{both} continuous and discrete laws \cite{hybrid}.

Although hybrid systems are ubiquitous in real life applications, their rigorous study is made difficult due to combination of continuous and discrete dynamics, and is consequently less developed in the literature. As such, the overarching goal of this work is to extend preexisting theory to hybrid systems to (a) study such systems through a probabilistic lens and (b) leverage the geometry of such systems in order to reduce the difficulty/dimension of the problem. 

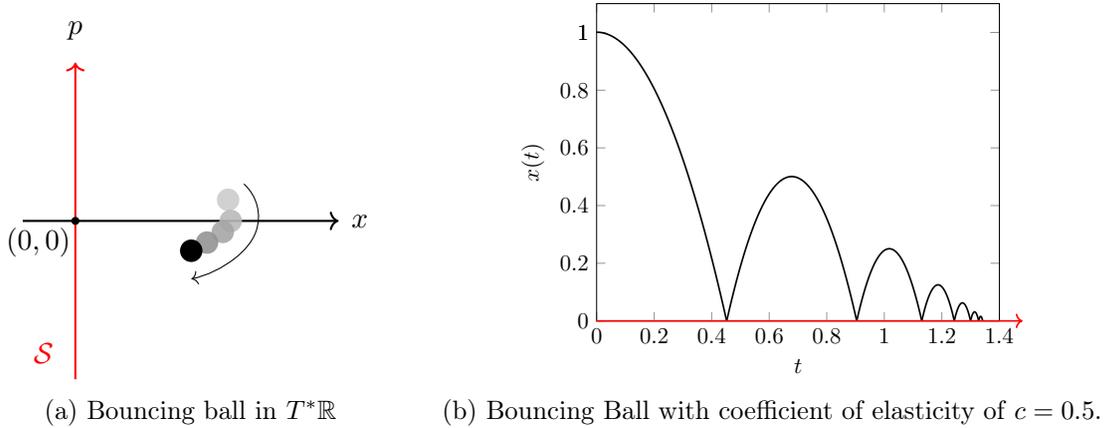
\begin{figure}[!ht]
    \centering
    \begin{subfigure}{.4\textwidth}
        \centering
\begin{tikzpicture}[x = 1.4cm, y = 1.4cm]
    \draw[thick,->] (0,1.5) -- (3,1.5);
    \draw[thick,->,red] (.5,0) -- (.5,3);
    \draw (3.2, 1.5) node [anchor=center][inner sep=0.75pt]{$x$};
    \draw (.5, 3.3) node [anchor=center][inner sep=0.75pt] {$p$};
    \draw (.2, .25) node [anchor=center,red][inner sep=0.75pt] {$\mathcal{S}$};
    \draw [fill] (.5,1.5) circle [radius=0.033];
    \draw (.15, 1.3) node [anchor=center][inner sep=0.75pt] {$(0,0)$};
    
    \node at (1.95, 1.7) [gray4,opacity=.9, circle, fill, inner sep=3pt]{}; 
    \node at (1.975, 1.5) [gray3,opacity=.9, circle, fill, inner sep=3pt]{}; 
    \node at (1.9, 1.396) [gray2,opacity=.9, circle, fill, inner sep=3pt]{}; 
    \node at (1.75, 1.295) [gray1,opacity=.9, circle, fill, inner sep=3pt]{}; 
    \node at (1.6, 1.2175) [opacity=1, circle, fill, inner sep=3pt]{}; 
    \draw[thin, ->] (2.1, 1.85) to[curve through={(2.215, 1.4)..(2.02, 1.15)}] (1.6, .95);
\end{tikzpicture}
        \caption{Bouncing ball in $T^*\mathbb{R}$}
        \label{subfig:BouncingBall}
    \end{subfigure}%
    \begin{subfigure}{.59\textwidth}
        \centering
        \resizebox{!}{5cm}{\begin{tikzpicture}
    \begin{axis}[
		    xlabel={$t$},
		    ylabel={$x(t)$},
		    xmin=0, xmax=1.4,
		    ymin=0, ymax=1.1,
		    xtick={0,.2,.4,.6,.8,1,1.2,1.4},
		    ytick={0,.2,.4,.6,.8,1,1},
		   axis equal image
		]
	\end{axis}
    \pgfmathsetmacro\rshift{1.9}
    
    \draw[thick, variable=\t,domain=0:.4517,samples=100]
        plot ({4.9*\t},{4.9*(1-(9.81/2)*(\t^2))})
        node[right] {} ;
    \draw[thick, variable=\t,domain=-.4517:.4517,samples=100]
        plot ({.5*4.9*(\t+3*.4517)},{.5*4.9*(1-(9.81/2)*((\t)^2))})
        node[right] {} ;
    \draw[thick, variable=\t,domain=-.4517:.4517,samples=100]
        plot ({.25*4.9*(\t+9*.4517)},{.25*4.9*(1-(9.81/2)*((\t)^2))})
        node[right] {} ;
    \draw[thick, variable=\t,domain=-.4517:.4517,samples=100]
        plot ({.125*4.9*(\t+21*.4517)},{.125*4.9*(1-(9.81/2)*((\t)^2))})
        node[right] {} ;
    \draw[thick, variable=\t,domain=-.4517:.4517,samples=100]
        plot ({.0625*4.9*(\t+45*.4517)},{.0625*4.9*(1-(9.81/2)*((\t)^2))})
        node[right] {} ;
    \draw[thick, variable=\t,domain=-.4517:.4517,samples=100]
        plot ({0.03125*4.9*(\t+93*.4517)},{0.03125*4.9*(1-(9.81/2)*((\t)^2))})
        node[right] {} ;
    \draw[thick, variable=\t,domain=-.4517:.4517,samples=100]
        plot ({0.015625*4.9*(\t+189*.4517)},{0.015625*4.9*(1-(9.81/2)*((\t)^2))})
			node[right] {} ;

    \draw[thick, red, ->](-.01,0) -- (7.25,0);
\end{tikzpicture}}
        \caption{Bouncing Ball with coefficient of elasticity of $c = 0.5$.}
        \label{subfig:1Dball}
    \end{subfigure}
    \caption{Plots of the motion of the bouncing ball. Left: Phase plot on $T^*\mathbb{R}$ with the momentum and position of the ball. Right: Trajectory of the ball, which tracks the height $x(t)$ of the ball. The guard is given by $\mathcal{S} = \{(x,p)\in T^*\mathbb{R} \cong \mathbb{R}^2 : x=0, ~ p < 0\}$.}
   
    \label{fig:BouncingBalls}
\end{figure} \vspace*{-5mm}

Consider the continuous-time dynamical system $\dot{x} = X(x)$, where $X$ is a vector-field on the ambient manifold $M$. Let $\mathcal{S}$  be a distinguished codimension one (embedded) submanifold   \\
\noindent\begin{minipage}[b]{.42\linewidth}
    called the \textit{guard}; this encodes location of the discrete transitions. Finally, we introduce the \textit{reset} map, $\Delta \colon \mathcal{S} \to M$, which dictates the discrete transition.
    Such a system will be called \textit{hybrid} and its dynamics will be governed by the following rule
    \begin{equation}\label{eq:HDS}
        \begin{cases}
            \dot{x} = X(x), & x\not\in \mathcal{S}, \\
            x^+ = \Delta(x^-), & x\in\mathcal{S}.
        \end{cases}
    \end{equation}
    This system exhibits continuous behavior away from the guard and discrete when the state enters the guard. If the guard is in its image, i.e., $\Delta(\mathcal{S}) \cap \mathcal{S} \ne \varnothing$, then multiple resets can immediately occur---a phenomenon called \textit{beating}. To 
\end{minipage}\hfill
\begin{minipage}[b]{.556\linewidth}
    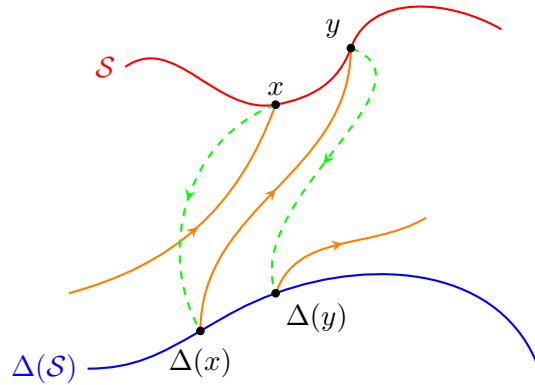
\begin{figure}[H]
        \centering
        \begin{tikzpicture}
    [decoration={markings, 
	    mark= at position 0.5 with {\arrow{stealth}}}] 
    \draw[red,thick] (-3,3) to [out=35,in=190] (-1,2.5) to [out=10,in=250] (0,3.25) to [out=70,in=150] (2,3.5);
    \node[red,left] at (-3,3) {$\mathcal{S}$};
    \draw[blue,thick] (-3.5,-1) to [out=0,in=210] (-2,-0.5) to [out=30,in=200] (-1,0) to [out=20,in=110] (2.5,-1);
    \node[blue,left] at (-3.5,-1) {$\Delta(\mathcal{S})$};
    \draw [postaction={decorate},orange,thick] (-3.75,0) to [out=15,in=-110] (-1,2.5);
    \draw [postaction={decorate},orange,thick] (-2,-0.5) to [out=90,in=-90] (0,3.25);
    \draw [postaction={decorate},orange,thick] (-1,0) to [out=70,in=-150] (1,1);
    \draw [postaction={decorate},green,thick,dashed] (-1,2.5) to [out=200, in=120] (-2,-0.5);
    \draw [postaction={decorate},green,thick,dashed] (0,3.25) to [out=-20, in=110] (-1,0);
    \draw [fill] (-1,2.5) circle [radius=0.05];
    \node [above] at (-1,2.5) {$x$};
    \draw [fill] (0,3.25) circle [radius=0.05];
    \node [above left] at (0,3.25) {$y$};
    \draw [fill] (-2,-0.5) circle [radius=0.05];
    \node [below] at (-2,-0.6) {$\Delta(x)$};
    \draw [fill] (-1,0) circle [radius=0.05];
    \node [below right] at (-1,0) {$\Delta(y)$};
\end{tikzpicture}
        \caption{Schematic drawing of a hybrid system.}
        \label{fig:HDS}
    \end{figure} 
    \vspace{0.2cm}
\end{minipage}
avoid this, we will assume that the guard is disjoint from its image under the reset map. More technical details for the hybrid system governed by \eqref{eq:HDS} will be discussed in Section \ref{sec:HDS}.

The Frobenius-Perron and Koopman operators are adjoint linear transfer operators that describe the evolution of scalar observables. Linear operator theory provides a framework for studying non-linear dynamical systems by obtaining infinite-dimensional linear representations of the system, which can be used for the spectral analysis of nonlinear dynamical systems \cite{mezic_koopman,mauroy2020koopman}. This allows for an elegant study of the asymptotic behaviour by looking at successive iterates of the Koopman and Frobenius-Perron operators, and their proprieties in the limit of infinitely many iterations. 

Despite the fact that Koopman's original ideas were published nearly a century ago, work on the subject was not popularized until fairly recently, due to a lack of efficient algorithms and computing power capable of calculating the Koopman operator. This theory is well-defined for both discrete- and continuous-time systems as right composition. However, current methods have shown that even when taking ino advantage the linearity of the Koopman operator, there are limitations when handling higher-dimensional systems \cite{koopman_ieee}. This leads us into our second approach---geometric dimension reduction.

The theory of reduction plays an important role in classical mechanics.
Lie-Poisson reduction concerns the case when the configuration space is a Lie group, $G$, and reduces the dynamics from the symplectic manifold $T^*G$ to the Poisson manifold $\mathfrak{g}^*$ effictively decreasing the dimension of the system from $2n$ to $n$ \cite[Chapter~13]{marsden1998introduction}. Discrete analogs have been proposed  \cite{discrete_reduction1}, where the Lie group is finite dimensional, and the Lagrangian is discrete. 
Hybrid systems pose a challenge due to the fact that the equations of motion come from a continuous Hamiltonian, whereas the discrete impacts are provided by an arbitrary map which might not come from a discrete Lagrangian as in \cite{discrete_reduction1}. Therefore, neither the continuous nor the discrete reduction can be directly applied. 

It is possible to leverage the symmetries of the continuous part of a hybrid system, but in order to do this, it is necessary to document the locations and effects of the discrete impacts. This is why, for hybrid systems, reduction from $2n $ to $n$ is not achievable. In this paper we propose a method (see \ref{thm:Impact_Reduction}) that allows reduction to $n + 1$ dimensions,  where the `extra' dimension is used to record the locations of the discrete impacts on the guard.

The goal of this work is to extend and combine these theories such that they can be used on more complex hybrid systems. For instance, the state space of a robotic device is often high dimensional, especially when one has to take into account a large number of components that work together to fulfill the functionality of the device. By reducing the dimension and studying the long-term behaviour of this system one could understand robustness and stability.

This manuscript is organized as follows: Section \ref{sec:prelim} contains the necessary preliminaries of hybrid systems (\ref{subsec:HDS}), the classical Frobenius-Perron and Koopman operators (\ref{subsec:FP}), and reduction (\ref{subsec:reduction}). Section \ref{sec:results} contains our results, specifically the hybrid Frobenius-Perron operator (\ref{subsec:hFP}) and hybrid Lie-Poisson reduction (\ref{subsec:hLPO}). Section \ref{sec:applications} contains numerical results for four examples: the bouncing ball (\ref{subsec:BB}), the Chaplygin sleigh (\ref{subsec:chapsleigh_angle}), matrix groups (\ref{subsec:matrix_groups}), and an SIR type disease model (\ref{sec:SIR}). Conclusions and future work are presented in Section \ref{sec:conclusions}. Detailed proofs for technical lemmas can be found in the Appendix. 
\newpage

\section{Preliminaries}\label{sec:prelim}
    
This section is devoted to defining common terminology related to our results and is broken up into three subsections: hybrid systems, Frobenius-Perron and Koopman operators, and reduction. An understanding of hybrid systems is necessary for understanding both of the main results, so hybrid systems are covered first, and each of the remaining subsections correspond to a main result.


\subsection{Hybrid Dynamical Systems}\label{subsec:HDS}
    \label{sec:HDS}
The purpose of this section is to define the types of systems we are working with. First, we will focus on hybrid dynamical systems, which are continuous dynamical systems which exhibit occasional discrete transitions. The state space will always be a smooth manifold $M$. The continuous dynamics will be given in terms of a vector field, and the jumps happen when the trajectory of the system hits the guard denoted by $\mathcal{S}$ \cite{simple_hybrid}. Following hybrid systems, we will refine our discussion to impact systems which are a subset of hybrid systems.
\begin{definition}[Hybrid dynamical system]
    \label{def:Hybrid_Dynamical_System}
    A hybrid dynamical system $\mathcal{H}$, abbreviated by HDS, is 4-tuple $\mathcal{H}=(M,\mathcal{S},\Delta,X)$ with the following properties
    \begin{enumerate}
        \item[\normalfont{(H.1)}] $M$ is a finite-dimensional smooth manifold,
        \item[\normalfont{(H.2)}] $\mathcal{S} \subset M$ is an embedded smooth manifold where $\codim\mathcal{S} = 1$,
        \item[\normalfont{(H.3)}] $\Delta \colon \mathcal{S} \to M$ is a smooth map whose image is an embedded submanifold,
        \item[\normalfont{(H.4)}] $X \colon M \to TM$ is a smooth vector field, and
        \item[\normalfont{(H.5)}] $\mathcal{S} \cap \Delta(\mathcal{S}) = \varnothing$ and $\codim\!\big(\overline{\mathcal{S}} \cap \overline{\Delta(\mathcal{S})} \big) \geq 2$.
    \end{enumerate}
    As stated in the introduction, the submanifold $\mathcal{S}$ is called the guard and the map $\Delta$ is the reset.
\end{definition}

Away from the guard the trajectories of a hybrid dynamical system are integral curves of the vector field $X$, whereas, on the guard the system is equivalent to a discrete dynamical system with map $\Delta$. Putting the continuous and the discrete parts of the trajectory together generates the hybrid flow. 

\begin{definition}[Hybrid Flow]
    \label{def:Hybrid_Flow}
    Let $\mathcal{H} = (M, \mathcal{S}, \Delta, X)$ be an HDS. The map $\varphi^\mathcal{H} \colon \mathrm{dom}(\varphi^\mathcal{H})\subset \mathbb{R}\times M\to M$ is the hybrid flow if 
    \begin{equation*}
        \begin{array}{cl}
            \displaystyle
            \frac{d}{dt} \varphi^\mathcal{H}(t,x) = X_{\varphi^\mathcal{H}(t,x)},
            & \varphi^\mathcal{H}(t,x)\not\in\mathcal{S}; \\[2ex]
            \displaystyle
            \lim_{s\to t^+} \varphi^\mathcal{H}(s,x) = \Delta\left( \lim_{s\to t^-} \varphi^\mathcal{H}(s,x)\right), & \varphi^\mathcal{H}(t,x) \in \mathcal{S}.
        \end{array}
    \end{equation*}
\end{definition}

Hybrid systems can fail to be complete in two qualitatively different ways: finite time blow ups of the continuous trajectory, and the Zeno phenomenon. The latter is special to hybrid systems, and it happens when a trajectory undergoes infinitely many jumps in a finite amount of time. 

Zeno trajectories pose a difficult challenge. In order to compute the hybrid flow as time approaches Zeno one needs to be able to perform infinitely accurate event detection. From a theoretical perspective, what happens after the time becomes greater than $t_\infty$ is still unclear \cite{LifeAfterZeno}. 
It is generally difficult to determine whether or not a state will have a Zeno trajectory or not. However, for reasonable HDSs which possess an invariant volume-form, the set of points which are Zeno form a null set \cite{hybrid_forms}. As volume-forms are smooth objects, the HDS must satisfy some amount of smoothness conditions.

\begin{definition}[Quasi-smooth dependence property]\label{def:Quasi-smooth Dependence Property}
   
    Let $\mathcal{H} = (M, \mathcal{S}, \Delta, X)$ be a HDS with flow $\varphi^\mathcal{H}$. $\mathcal{H}$ has the quasi-smooth dependence property if for every $x \in M \smm \mathcal{S}$ and $t \in \mathbb{R}$ such that $\varphi^\mathcal{H}(t,x) \not\in \mathcal{S}$, there exists an open neighborhood $x\in U$ such that $U \cap \mathcal{S} = \varnothing$ and the map $\varphi^\mathcal{H}(t,\,\cdot\,) \colon U \to M$ is smooth. Furthermore, if $\mathcal{H}$ is a HDS with the quasi-smooth dependence property, then $\mathcal{H}$ is a smooth HDS.
\end{definition}

In this work we will assume that all the systems have the quasi-smooth dependence propriety (as well as completeness, unless otherwise stated); this is not a strong assumption cf. \cite{clarkthesis}. This assumption allows for a meaningful study of how volumes preserve across applications of the reset map, which is crucial when defining the Frobenius-Perron operator. 
For a volume form $\mu \in \Omega^n(M)$ to be be invariant under the hybrid flow within a smooth HDS, $\mu$ must be invariant under both the continuous-time dynamics and the reset. Invariance under the continuous dynamics follows when $\mathcal{L}_X\mu=0$ (equivalently, $\mathrm{div}_\mu(X)=0$), where $\mathcal{L}$ denotes the Lie derivative. Invariance across resets occurs when
\begin{equation*}
    \Delta^* i_X\mu = \iota^*_{\mathcal{S}}i_X\mu,
\end{equation*}
where $\iota_{\mathcal{S}}\colon\mathcal{S}\hookrightarrow M$ is the inclusion and $i_X$ is the interior product \cite{hybrid_forms}. This prompts the following definition.

\begin{definition}[Hybrid Jacobian of $\Delta$]
\label{def:hybrid_jacobian}
    Let $\mathcal{H} = (M,\mathcal{S},\Delta ,X)$ be a smooth HDS and $\mu\in \Omega^n(M)$ a volume-form. The unique function $\mathcal{J}_\mu^X(\Delta)\in C^\infty(\mathcal{S})$ such that
    \begin{equation}\label{eqn:hybrid Jacob definition}
        \Delta^*i_X\mu = \mathcal{J}_\mu^X(\Delta)\cdot \iota^*_\mathcal{S} i_X\mu,
    \end{equation}
    is called the hybrid Jacobian of $\Delta$.
\end{definition}

\begin{proposition}[Theorem 9 in \cite{hybrid_forms}]\label{prop:the hybrid jacobian is 1 most of the time}
Let $\mathcal{H} = (M, \mathcal{S}, \Delta ,X)$ be a smooth HDS and $\mu\in \Omega^n(M)$ a volume-form. Then $\mu$ is invariant under the flow of $\mathcal{H}$ if $\mathcal{L}_X\mu = 0$ and its hybrid Jacobian $\mathcal{J}_\mu^X (\Delta)$ is 1. 
\end{proposition}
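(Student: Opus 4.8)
The plan is to reduce the invariance of $\mu$ under the full hybrid flow $\varphi^\mathcal{H}$ to two independent claims: invariance under the continuous dynamics away from $\mathcal{S}$, and a flux-balance across a single impact. Any finite segment of a hybrid trajectory is a concatenation of integral curves of $X$ interrupted by applications of $\Delta$, and measure-invariance is preserved under composition of maps, so it suffices to verify preservation of $\mu$ on each ingredient separately. The continuous part is classical: by Cartan's formula $\mathcal{L}_X\mu = d\, i_X\mu + i_X\, d\mu$, and since $\mu\in\Omega^n(M)$ is a top form we have $d\mu = 0$, so the hypothesis $\mathcal{L}_X\mu = 0$ is exactly the Liouville condition $(\varphi^X_s)^*\mu = \mu$ for the flow $\varphi^X_s$ of $X$ wherever it is defined. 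This requires no new work, and reduces the problem to the impact.

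For the impact I would introduce adapted flow-box coordinates near a transversal impact point of $\mathcal{S}$. Because $\iota_\mathcal{S}^* i_X\mu$ must be a nonvanishing $(n-1)$-form for the hybrid Jacobian $\mathcal{J}_\mu^X(\Delta)$ in \eqref{eqn:hybrid Jacob definition} to be well defined, $X$ is transverse to $\mathcal{S}$; the flow-box theorem then supplies coordinates $(s,y)$ on a neighborhood $U$ of a point of $\mathcal{S}$ in which $X=\partial_s$ and $\mathcal{S}=\{s=0\}$. The invariance $\mathcal{L}_X\mu=0$ forces $\mu = ds\wedge\beta$ with $\beta$ an $s$-independent $(n-1)$-form, and one checks that $\beta|_{\{s=0\}} = \iota_\mathcal{S}^* i_X\mu$; thus the transverse measure carried by the flow across $\mathcal{S}$ is precisely $\iota_\mathcal{S}^* i_X\mu$. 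Running the same construction near $\Delta(\mathcal{S})$, using transversality of $X$ there, identifies the transverse measure leaving along $\Delta(\mathcal{S})$ as $\iota_{\Delta(\mathcal{S})}^* i_X\mu$.

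I would then express the hybrid transition map---flow up to $\mathcal{S}$, apply $\Delta$, flow off $\Delta(\mathcal{S})$---in these coordinates. Since the flow acts as an $s$-translation that leaves $\beta$ unchanged, the only non-translation ingredient is $\Delta$ acting on the transverse slices, so the effect of one impact on the transverse measure is governed by $\Delta^*\big(\iota_{\Delta(\mathcal{S})}^* i_X\mu\big) = \Delta^* i_X\mu$. By the defining equation \eqref{eqn:hybrid Jacob definition} this equals $\mathcal{J}_\mu^X(\Delta)\cdot \iota_\mathcal{S}^* i_X\mu$, so the hypothesis $\mathcal{J}_\mu^X(\Delta)=1$ says exactly that the incoming transverse measure at $\mathcal{S}$ and the outgoing transverse measure at $\Delta(\mathcal{S})$ coincide. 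Reassembling the $s$-integration with this slice-wise identity gives $(\text{impact map})^*\mu = \mu$, which combined with the continuous invariance yields $(\varphi^\mathcal{H})^*\mu=\mu$.

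The main obstacle I anticipate is upgrading this flow-box-and-flux computation from a pointwise statement to a clean global one: one must cover the relevant portions of $\mathcal{S}$ and $\Delta(\mathcal{S})$ by such charts, keep the orientation and sign conventions inside $i_X\mu$ consistent across them, and control the set of grazing (tangency) points together with the Zeno points, which should form a $\mu$-null set under the smoothness hypotheses and hence not affect the measure identity. A secondary technical point is verifying that the hybrid transition map is measure-theoretically well behaved---almost-everywhere defined and nonsingular---which is precisely where the quasi-smooth dependence property of \Cref{def:Quasi-smooth Dependence Property} enters.
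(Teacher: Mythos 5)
Your argument is correct and matches the approach the paper relies on: the paper does not prove this proposition itself (it quotes Theorem 9 of \cite{hybrid_forms}), but the discussion immediately preceding Definition \ref{def:hybrid_jacobian} decomposes invariance into exactly your two ingredients---$\mathcal{L}_X\mu=0$ for the continuous dynamics and $\Delta^* i_X\mu=\iota^*_{\mathcal{S}}i_X\mu$ (i.e.\ $\mathcal{J}_\mu^X(\Delta)=1$) across resets. Your flow-box/flux-balance computation is the standard way to justify the reset condition, and the caveats you flag (transversality where the hybrid Jacobian is defined, orientation bookkeeping across charts, and the null sets of grazing and Zeno points) are precisely the technical points that need care.
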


\begin{example}[Filippov Systems]
    Consider the plane $M = \R^2$, with the guard given by 
    $$\mathcal{S} = \set{(x,y) \in \R^2 : xy = 0},$$ 
    and the identity reset map of $\Delta(x,y) = \Id_{\R^2}(x,y) = (x,y)$. Furthermore, consider the volume form $\mu = dx \wedge dy$ and the following discontinuous vector field.
    \begin{align*}
        X_{(x,y)} =
        \begin{cases}
            \left(\dfrac{y}{\alpha} - x\right) \dfrac{\p}{\p x} - (\alpha x + y) \dfrac{\p}{\p y}
            & \t{for }xy > 0; \\[7pt]
            (\alpha y - x) \dfrac{\p}{\p x} - \left(y + \dfrac{x}{\alpha}\right ) \dfrac{\p}{\p y}
            & \t{for }xy < 0.
        \end{cases}
    \end{align*}
    To compute the hybrid Jacobian, we notice that $\Delta$ is the identity so we must compute $\iota^*_\mathcal{S}i_X\mu$ on both sides of the guard
    \begin{align*}
        i_X dx\wedge dy = \begin{cases}
            \left( \dfrac{y}{\alpha}-x\right) dy + \left(\alpha x + y\right)dx & \t{for }xy>0; \\[5pt]
            \left(\alpha y - x\right)dy + \left(y + \dfrac{x}{a}\right) dx & \t{for } xy<0.
        \end{cases}
    \end{align*}
    Substituting either $x=0$ or $y=0$ shows that the hybrid Jacobian is
    \begin{align*}
        \mathcal{J}_\mu^X(\Id_{\R^2}) = \alpha^2.
    \end{align*}
    As a consequence of Theorem \ref{thm:general chikos} below, an invariant density will exist when $\alpha = \exp(\pi)$. In \\
    \begin{minipage}[b]{0.43\textwidth}
    this case, if $(x,y)$ lies within the first quadrant, then an invariant density is given by
    \begin{equation*}
        \rho(x,y) = e^{-2\tau}, \quad \tan\tau = \dfrac{\alpha x}{y}.
    \end{equation*}
    In the case that the points do not lie in the first quadrant, $\rho$ can be extended through the following map
    \begin{equation*}
        \tilde{\rho}(x,y) = 
        \begin{cases}
            \rho(x,y), & x > 0, \, y > 0; \\[.5ex]
            \rho(-y, x), & x > 0, \, y < 0; \\[.5ex]
            \rho(-x,-y), & x < 0, \, y < 0; \\[.5ex]
            \rho(y,-x), & x < 0,\, y > 0,
        \end{cases}
    \end{equation*}
    which is displayed in Figure \ref{fig:filippov}.
    
    \end{minipage}\hfill
    \begin{minipage}[b]{0.5025\textwidth}
    \begin{figure}[H]
        \centering
        \includegraphics[width = .925\linewidth]{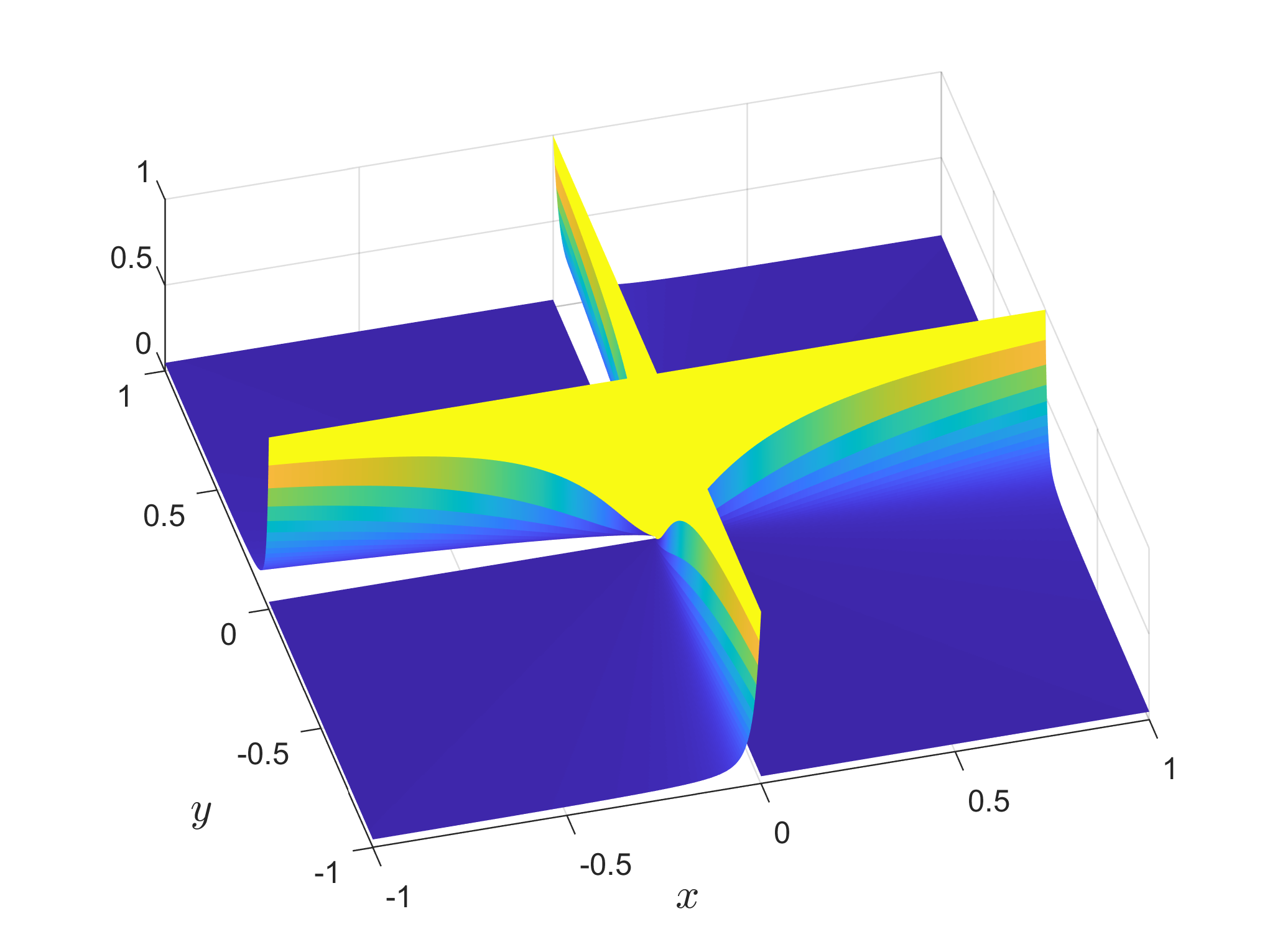}
        \caption{This is the invariant density for this example. The jump discontinuity between quadrants is precisely the hybrid Jacobian: $\exp(\pi)$.}
        \label{fig:filippov}
    \end{figure}
\end{minipage}

\end{example}

As will be seen in Section \ref{subsec:chikos}, the hybrid Jacobian will be central in the study of the hybrid Frobenius-Perron operator.

    \subsubsection{Mechanical Impact Systems}
        A physically motivated class of hybrid systems are mechanical impact systems.
In these systems, the continuous dynamics obey Hamiltonian/Lagrangian mechanics while the reset map encodes the momentum/velocity jump at a physical impact. Throughout this work, we will be primarily in the Hamiltonian setting, but the results can be translated to the Lagrangian framework with little difficulty.

Let $Q$ be an $n$ dimensional smooth manifold representing the configuration space of the system of interest. To each point $q\in Q$, the possible momenta will be elements of the cotangent space $p\in T_q^*Q$. The union of all possible momenta attached to all possible points is called the cotangent bundle or the phase space, and is denoted by $T^*Q$. In a similar fashion, the possible velocities will be tangent vectors, $v \in T_qQ$, and the collection of all positions and velocities will be the tangent bundle $TQ$.

The cotangent bundle is equipped with the canonical symplectic form which we call $\omega \in \Omega^2(T^*Q)$, and in local coordinates is given by $\omega = dq^i \wedge dp_i$. For a given Hamiltonian function $H \colon T^*Q \to \mathbb{R}$, Hamilton's equations of motion are described by 
 \begin{equation}\label{eq:simple_eom}
   i_X \omega = dH,
 \end{equation}
 where $i_X$ denotes the interior product, $i_X\omega(Y) = \omega(X, Y)$ for any vector field $Y$, $dH$ is the differential of $H$, and $X$ is the resulting Hamiltonian vector field. There exist various levels of regularity for Hamilton functions; throughout this work, we will assume that all Hamiltonians are of mechanical type.

 \begin{definition}[Natural Hamiltonians]
    A Hamiltonian $H \colon T^*Q \to \R$ is called 
    natural/mechanical if there exists a Riemannian metric $\mTse$ on $Q$ and a function $V \colon Q \to \R$ such that
        \begin{equation*}
            H(q,p) = \frac{1}{2}(\mTse^{-1})_q(p,p) + V(q),
        \end{equation*}
        where $\mTse^{-1}$ is the induced co-metric.
\end{definition}

For a natural Hamiltonian we can explicitly write the write the Hamiltonian vector field from \eqref{eq:simple_eom} and obtain the following coordinate representation of Hamilton's equations of motion:
\begin{equation}\label{eqn:Induced Hamilton Vector Field}
    \begin{split}
        \dot{q}^k 
        & = \frac{\partial H}{\partial p_k} 
        = (\mTse^{-1})^{kj}p_j; \\
        \dot{p}_k 
        & = -\frac{\partial H}{\partial q^k} 
        = -\frac{1}{2}\frac{\partial (\mTse^{-1})^{ij}}{\partial q^k}p_ip_j - \frac{\partial V}{\partial q^k}.
    \end{split}
\end{equation}
The metric underlying a mechanical Hamiltonian has the additional feature of being able to isomorphically transform vectors to covectors. 
\begin{definition}[Musical Isomorphisms
]
\label{def:musical_isomorphisms} 
    Given a Riemannian manifold $(Q, \mTse)$ the musical isomorphisms are the maps
    \begin{align*}
        \flat \colon &  \mathfrak{X}(Q) \to \Omega^1(Q) : X \mapsto \mTse(X, \cdot\,),\\
        \sharp \colon &  \Omega^1(Q) \to \mathfrak{X}(Q) : \alpha \mapsto \mTse^{-1}(\alpha, \cdot\,),
    \end{align*}
    which are defined so that $\sharp = \flat^{-1}$. In coordinates the musical isomorphisms can be written as
    \begin{align*}
        (X^\flat)_i = \mTse_{ij}X^j, \qquad \t{and}\qquad (\alpha^\sharp)^i = (\mTse^{-1})^{ij}\alpha_j.
    \end{align*}
\end{definition}

\begin{remark}\label{remark:Fiber Derivative}
If the Hamiltonian is natural then one can isomorphically switch between the Hamiltonian and Lagrangian formalism using the fiber derivative $\mathbb{F}L \colon TQ \to T^*Q$, where $L:TQ\to\mathbb{R}$ is the Lagrangian, and the inverse fiber derivative $\mathbb{F}H \colon T^*Q \to TQ$. In the mechanical case, the fiber derivatives agree with the musical isomorphisms.

\end{remark}

Impacts induce instantaneous changes in the momenta in $T^*Q$ and are triggered by the positions in $Q$. We assume that the impact surface $\Sigma$ is a smooth codimension 1 submanifold of the configuration space $Q$, which can locally be written as a regular level set of a smooth function $s \colon Q \to \R$. Away from $\Sigma$, the dynamics are integral curves of the Hamiltonian vector field generated by $H$. We put these pieces together in the following definition:

\begin{definition}[Impact system]
\label{def:Impact System}
An impact system is a tuple $\mathcal{I} = (Q, H, \Sigma)$ such that
    \begin{enumerate}
        \item[\normalfont{(I.1)}] $Q$ is a finite-dimensional smooth manifold,
        \item[\normalfont{(I.2)}] $H \colon T^*Q \to \R$ is a natural Hamiltonian,
        \item[\normalfont{(I.3)}] $\Sigma \subset Q$ is an orientable embedded smooth manifold where $\codim\Sigma = 1$.
    \end{enumerate}
\end{definition}

The three pieces of information in Definition \ref{def:Impact System} are enough to completely specify a hybrid dynamical system $(M, X, \mathcal{S}, \widetilde{\Delta})$ through the following process.
\begin{itemize}
    \item[(h.1)] $M = T^*Q$ is the phase space containing positions and momenta $(q, p)$.
    \item[(h.4)] $X$ is given by $X_H$, the Hamiltonian vector field generated by $H$ through equation \eqref{eq:simple_eom}.
    \item[(h.2)] The guard is the set of all outward pointing momenta attached to positions in $\Sigma$, so
    \begin{align*}
        \mathcal{S} = \left\{(q, p) \in T^*Q|_\Sigma : \pi^*_Q ds(X_H) > 0\right\},
    \end{align*}
    where $\pi_Q \colon T^*Q \to Q$ is the canonical projection. 
  
    \item[(h.3)] The impact map is identity in the first $n$ components $\widetilde{\Delta}(q, p) = (q, \Delta(p))$. In the last $n$ components, $\Delta(p)$ can be computed from the Weierstrass-Erdmann corner conditions \cite[Section 15]{CoV_1991}. Let $p^+$ and $p^-$ denote be the values of the momenta before and after the impact, respectively, with $H^+$ and $H^-$ their corresponding values of the Hamiltonian. Then the corner conditions are given by
    \begin{gather}\label{Weierstrass–Erdmann corner conditions}
        H^- = H^+ 
        \qquad \text{and} \qquad
        p^+ - p^- =  \varepsilon \, ds.
    \end{gather}
\end{itemize}
\begin{remark}
    Throughout this paper, the impact surface $\Sigma \subset Q$ will be independent of time
    , but in general, the local defining function $s$ is an element of $C^\infty(\R \times Q)$, which locally defines $\Sigma_t \subset \R \times Q$. Then, the left side of \eqref{Weierstrass–Erdmann corner conditions} becomes $H^- - H^+ = \varepsilon \p s/\p t$, which can be physically intuited as the impact surface adding to/taking away from the energy of the system.
\end{remark}

We can rephrase Equation \eqref{Weierstrass–Erdmann corner conditions} in a coordinate-free manner with 
\begin{equation}\label{eqn:CordFreeWECornerConditions}
    (\Id \times \Delta)^*\vartheta_H = \iota^* \vartheta_H \qquad \text{and}\qquad  \vartheta_H = p_i dx^i - H dt
\end{equation}
where $\iota \colon \R \times Q \hookrightarrow \R \times T^*Q$ is the inclusion map and $\vartheta_H$ is called the action form. Note that \eqref{eqn:CordFreeWECornerConditions} works for both time-dependent and time-independent impact surfaces, but will be used only for the latter.

In the case where the Hamiltonian in \eqref{Weierstrass–Erdmann corner conditions} is natural, there exists a unique solution \cite{hybrid_forms,cog_optimal_control} given by
\begin{equation}
    \tilde\Delta(q, p) = (q, R(q)p),
    \qquad \text{where} \qquad  
    R(q)p = p - 2\frac{\mTse^{-1}(ds, p)}{\mTse^{-1}(ds, ds)}ds.
\end{equation}

    \subsubsection{Constraints}
        Constraints are ubiquitous in mechanical systems and dictate either allowable positions or velocities. There are two distinguished classes of constraints: integral or holonomic and non-integrable/nonholonomic.
While holonomic systems maintain their Hamiltonian structure, nonholonomic systems are \textit{not} Hamiltonian as they do not come from variational principles. Fortunately, holonomic systems can be viewed as a special case of nonholonomic systems and the theory for the latter still describes the former. As a result, we will treat all constraints as nonholonomic and all constraints will be \textit{linear} in the velocities. For a more in-depth treatment of nonholonomic mechanics see \cite[Chapter~2]{bloch2004nonholonomic}. 

To obtain the nonholonomic equations of motion, one must use the Lagrange-d'Alembert principle \cite[Chapter~7]{marsden1998introduction}. In the Hamiltonian formalism these give raise to the constraint Hamiltonian vector field. 

\begin{definition}[Constrained Hamiltonian vector field]
    Let $(Q,\omega,H)$ be a natural Hamiltonian system and $\mathcal{D}\subset TQ$ be a regular distribution. Let $\eta^\alpha\in\Omega^1(Q)$ be a collection of $k$ 1-forms which locally describe $\mathcal{D}$ via annihilation. Then the nonholonomic Hamiltonian vector field $X_H^\mathcal{D}$, is the unique vector field such that
    \begin{equation*}
        i_{X_H^\mathcal{D}}\omega|_{\mathcal{D}^*} = dH|_{\mathcal{D}^*} + \lambda_\alpha \pi_Q^*\eta^\alpha|_{\mathcal{D}^*},
    \end{equation*}
    where $\lambda_\alpha$ are multipliers to enforce the constraints,
    \begin{equation*}
        \eta^\alpha\big(\mathbb{F}H(q,p)\big) = 0, \quad 1 \leq \alpha \leq k,
    \end{equation*}
    and $\mathbb{F}H\left(\mathcal{D}^*\right) = \mathcal{D}$ is the induced 
    $($co$)$distribution on $T^*Q$, i.e., 
    $\mathcal{D}^* = \{(q, p)\in T^*Q : p = g(\dot{q}, \cdot) \text{ for some } (q, \dot{q}) \in \mathcal{D}\}.$
\end{definition}

\begin{example}[The Chaplygin sleigh]\label{example:Chaplygin sleigh} The Chaplygin sleigh is a pedagogical example of a (nonholonomically) constrained system. The configuration space of the Chaplygin sleigh is $\text{SE}_2$, which tracks the $x$ and $y$ position of the sleigh along with the direction $\theta$ its facing. The Lagrangian of this system is
\begin{align*}
    L = \frac{1}{2}\left(m(\dot{x}^2 + \dot{y}^2) + (I + ma^2)\dot{\theta}^2 - 2ma\dot{\theta}\left(\dot{x} \sin\theta - \dot{y} \cos\theta\right)\right)
\end{align*}
where $m$ is the mass of the sleigh, $a$ is the distance between the center of mass and the front of the sleigh, $I$ is the moment of inertia about the center of mass. To disallow for sliding in transverse directions, the sleigh is given the knife edge constraint,
\begin{align}\label{eq:sleigh_constraint}
    \dot{x} \sin\theta - \dot{y}\cos\theta = 0,
    \qquad \t{or} \qquad
    \eta = \sin(\theta)\,dx -\cos(\theta)\,dy.
\end{align}
Given the constraint \eqref{eq:sleigh_constraint}, the corresponding distribution is
\begin{align*}
    \mathcal{D} = \left\{ (x,y,\theta,\dot{x},\dot{y},\dot{\theta})\in TQ : \dot{x} \sin\theta - \dot{y}\cos\theta = 0\right\},
\end{align*}
which shows the relation between the allowable configurations and the constraints. More information can be found within \cite[Chapter~1.7]{bloch2004nonholonomic}.
\end{example}

We now combine the constraint continuous dynamics with the discrete dynamics given by impacts. The object that stores all the information about the constrained dynamics and the discrete transitions is called a constrained impact system, and is defined below.

\begin{definition}[Constrained impact system]\label{def:Constrained Impact System}
A constrained impact system is a 4-tuple $\mathcal{I} = (Q, H, \Sigma, \mathcal{D})$ such that
    \begin{enumerate}
        \item[\normalfont{(I.1)}] $Q$ is a finite-dimensional smooth manifold,
        \item[\normalfont{(I.2)}] $H \colon T^*Q \to \mathbb{R}$ is a natural Hamiltonian,
        \item[\normalfont{(I.3)}] $\mathcal{D} \subset TQ$ is a regular distribution, 
        \item[\normalfont{(I.4)}] $\Sigma \subset M$ is an embedded smooth manifold where $\codim\Sigma = 1$.
    \end{enumerate}
\end{definition}

\noindent{\color{header1} \textit{Example} \ref{example:Chaplygin sleigh} \ {\sffamily{(Continued).}}} In addition to the to the previous properties of the Chaplygin sleigh, we now define an impact surface for the system
\begin{align*}
    s \colon \text{SE}_2 \to \mathbb{R} : (x,y,\theta) \mapsto (\theta - \theta_0)(\theta + \theta_0).
\end{align*}
Given this map, $\Sigma = s^{-1}(\{0\}) = \{(x,y,\theta) \in \text{SE}_2: (\theta - \theta_0)(\theta + \theta_0) = 0\}$.
With the impact surface, the Chaplygin sleigh becomes a constrained impact system in contrast to previously being a constrained mechanical system.

Next, the corresponding reset map for this system is given by $\Delta(v,\omega,\theta) = (v,-\omega,\theta),$ which represents the sleigh automatically changing direction when the critical angle $\theta_0$ is reached. This example will be further continued in Section \ref{subsec:chapsleigh_angle}.


\subsection{The Frobenius-Perron and Koopman Operators}\label{subsec:FP}
        When studying the long time behaviour of dynamical systems through the probabilistic lens, one is interested in the evolution of densities instead of single points. Consider a large number of initial points $x_1, \dots , x_n \in X$, sampled randomly from some probability density function $f_0$, and assume that the dynamics are given by a 1-parameter group action $\phi_t \colon X \rightarrow X$. Each of these points are going to move around under the dynamics to points $x_1(t), \dots, x_n(t)$ after some time $t$. If $n$ is large and $t \to \infty$, computing, for each individual trajectory, its value at time $t$ becomes very expensive. 
Now, assume that there exists some other probability density $f_t$ such that $x_1(t), \dots, x_n(t)$ are roughly obtained by sampling $n$ points from this distribution. Then in order to analyze the long term behavior of the system it is sufficient to look at how $f_t$ evolves over time, and in particular, at $\lim_{t \to \infty}f_t$. The Frobenius-Perron operator dictates the evolution of $f_t$. 

In the following, we provide the formal definitions of the Frobenius-Perron operator for discrete and continuous time systems as well as its infinitesimal generators \cite{chaosfractalsnoise}.
\begin{definition}[Discrete Frobenius-Perron Operator]
    \label{def: Discrete_FP_Operator}
    Let $\varphi \colon M \to M$ be a nonsingular measurable transformation on a measure space $(M, \mathcal{A}, \mu)$. The (discrete) Frobenius-Perron operator corresponding to $\varphi$ is the unique linear operator  $P \colon L^1(M, \mathcal{A}, \mu) \to L^1(M, \mathcal{A}, \mu)$ defined by
    \begin{equation}\label{eq:Discrete_FP_def}
        \int_E Pf(x)\,d\mu = \int_{\varphi^{-1}(E)} f(x)\,d\mu, \qquad \text{for all } E\in\mathcal{A}.
    \end{equation}
\end{definition}
\begin{remark}[Measure-theoretic considerations]\label{sigma algebra remark}
    Throughout this paper, all sigma algebras are assumed to be the Borel algebra generated by the topology on the manifold ($Q$ or $M$, depending on the context), so measure spaces will be denoted by doubles $(M,\mu)$ rather than triples $(M, \mathcal{A}, \mu)$. Furthermore, all sets are assumed to be measurable unless stated otherwise. In addition, we interchangeably use our volume-forms as measures. Formally, $\mu(E) \coloneqq \int_E \mu$ for $\mu \in \Omega^n(M)$ and a measurable set $E \subseteq M$.
  
\end{remark}

Given an initial probability distribution $f \in L^1$, the Frobenius-Perron operator gives an evolution of $f$ by the transformation $\varphi$. Furthermore, for any $f \in L^1$, if $\varphi$ is an \textit{invertible} non-singular transformation, then  \vspace*{-3mm}
\begin{equation*}
    Pf(x) = f(\varphi^{-1}(x))\mathcal{J}^{-1}(x),
\end{equation*}
where $\mathcal{J}^{-1}$ is the determinant of the inverse of the Jacobian matrix. If $\varphi$ fails to be invertible, 
\begin{equation}\label{eq:discFP}
    Pf(x) =\sum_{y\in \varphi^{-1}(\set{x})} f(y)\mathcal{J}^{-1}(y).
\end{equation}

The discrete Frobenius-Perron operator has a natural continuous-time system analog.
\begin{definition}[Continuous Frobenius-Perron Operator]
\label{def: Continuous_FP_Operator}
    Let $\varphi \colon M \times \mathbb{R}\to M$ be a flow and denote $\varphi_t(x) = \varphi(x,t)$. Suppose that the flow is nonsingular and measurable on the measure space $(M,\mu)$.
    For $E \subseteq M$, the (continuous) Frobenius-Perron operator $P_t \colon L^1(M,\mu) \to L^1(M,\mu)$ for each $t \in \mathbb{R}$ corresponding to $\varphi_t$ is defined by
    \begin{equation*}
        \int_E P_tf(x) \, d\mu = \int_{\varphi_{-t}(E)}f(x) \, d\mu \qquad \t{ for all } E\subseteq M
    \end{equation*}
\end{definition}

\begin{remark}
    The Frobenius-Perron and Koopman operators are adjoints of one another. Letting $K_t \colon L^\infty(M,\mu) \to L^\infty(M,\mu)$ be the Koopman operator, which is defined as
     \vspace*{-3mm}\begin{equation*}
        K_tg(x) = g(\varphi_t(x))
    \end{equation*}
    and $\langle\cdot,\cdot\rangle \colon L^1\times L^\infty \to \mathbb{R}$ denote the duality pairing (see \cite[Page~212]{chaosfractalsnoise} for duality pairings in this context) between $L^1$ and $L^\infty$, then $\langle P_t f, g \rangle = \langle f,K_tg\rangle$. 
\end{remark}

    \subsubsection{Infinitesimal Generators}
        Computing the Frobenius-Perron operator directly from the definition requires knowledge of the flow $\varphi_t$ which is the solution to the differential equation $\dot{x} = X(x)$. This is often infeasible, especially if the analytical expression for $\varphi_t$ is not known. This problem can be sidestepped by considering the infinitesimal generator of the Frobenius-Perron operator, which allows us to describe the evolution of an initial probability density function as the solution to a linear partial differential equation.

\begin{definition}[Infinitesimal Generator of Frobenius-Perron Operator]
    Let $\{P_t : t \in \mathbb{R}\}$ be a family of Frobenius-Perron operators  corresponding to a the flow $\{\varphi_t\}$ arising from the differential equation $\dot{x} = X(x)$. The infinitesimal generator for the Frobenius-Perron operator, $A \colon \dom(A)\subset L^1(M,\mu) \to L^1(M,\mu)$ is defined as
    \begin{equation}
        Af = \lim_{\tau \to 0} \frac{P_{t+\tau}f-P_tf}{\tau}.
    \end{equation}
\end{definition}

We now state a theorem which allows for the computation of both $A$ and $P_t$, assuming that $f$ is differentiable.

\begin{theorem}[{\cite[Theorem~7.6.11]{chaosfractalsnoise}}]\label{thm:inf_gen}
    Let $X\in\mathfrak{X}(M)$ be a smooth vector field inducing the flow $\varphi_t$. Suppose that the measure space $(M, \mu)$ has a smooth measure, i.e., 
    $\mu\in\Omega^n(M)$ is a differentiable volume-form. 
    Then the infinitesimal generator $A$ for the Frobenius-Perron operator is given by
    \begin{equation}\label{eq:Cont_inf_gen}
        Af = -df(X) - f\cdot \mathrm{div}_\mu X.
    \end{equation}
    Moreover, if $u(t,x) = P_tf(x)$, then $u$ solves the partial differential equation
    \begin{equation}\label{eq:Cont_FP}
       \frac{\partial u}{\partial t} + du(X) = -u\cdot\mathrm{div}_\mu(X), \qquad u(0,x) = f(x),
    \end{equation}
    where $\mathrm{div}_\mu(X)$ is the divergence of the vector field $X$ with respect to the volume form $\mu$.
\end{theorem}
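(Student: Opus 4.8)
The plan is to obtain a closed-form expression for $P_t f$ via the change-of-variables formula and then differentiate in $t$ at $t=0$. Since $X$ is smooth and generates $\varphi_t$, each $\varphi_t\colon M\to M$ is a diffeomorphism with inverse $\varphi_{-t}$, and because $M$ carries a volume form it is orientable with $\varphi_t$ orientation-preserving. Writing $J_{-t}\coloneqq \varphi_{-t}^*\mu/\mu\in C^\infty(M)$ for the smooth positive ratio of top forms, the pullback identity $\int_{\varphi_{-t}(E)}f\,\mu=\int_E \varphi_{-t}^*(f\mu)=\int_E (f\circ\varphi_{-t})\,J_{-t}\,\mu$, combined with the defining relation of the continuous Frobenius--Perron operator, gives $\int_E P_tf\,\mu=\int_E (f\circ\varphi_{-t})\,J_{-t}\,\mu$ for every measurable $E$, hence
\[
    P_t f = (f\circ\varphi_{-t})\cdot J_{-t}
\]
holding $\mu$-almost everywhere. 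In particular $P_0 f=f$, since $\varphi_0=\mathrm{Id}$ and $J_0=1$.

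Next I would compute $Af=\frac{d}{dt}\big|_{t=0}P_t f$ by the product rule, which splits the generator into two pieces. For the first, the chain rule gives $\frac{d}{dt}\big|_{t=0} f(\varphi_{-t}(x))=df_x\big(\tfrac{d}{dt}\big|_{t=0}\varphi_{-t}(x)\big)=df_x(-X_x)=-df(X)$, using that the velocity of the backward flow at $t=0$ is $-X$. For the second, I would invoke the characterization of the Lie derivative along a flow, $\frac{d}{dt}\big|_{t=0}\varphi_t^*\mu=\mathcal{L}_X\mu$, so that $\frac{d}{dt}\big|_{t=0}\varphi_{-t}^*\mu=-\mathcal{L}_X\mu$; together with the definition of divergence $\mathcal{L}_X\mu=\mathrm{div}_\mu(X)\,\mu$ (equivalently $d(i_X\mu)=\mathrm{div}_\mu(X)\,\mu$ by Cartan's formula, since $d\mu=0$ on a top form), this yields $\frac{d}{dt}\big|_{t=0}J_{-t}=-\mathrm{div}_\mu(X)$. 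Assembling the two pieces with $J_0=1$ produces exactly
\[
    Af=-df(X)-f\cdot\mathrm{div}_\mu(X).
\]

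For the evolution equation I would use the semigroup property $P_{t+\tau}=P_\tau\circ P_t$ to identify the difference quotient in the definition of $A$ with $\frac{d}{dt}P_t f=A(P_t f)$. Setting $u(t,x)=P_tf(x)$ and applying the generator formula with $P_t f$ in place of $f$ gives $\partial_t u=-du(X)-u\cdot\mathrm{div}_\mu(X)$, where $du$ denotes the spatial differential; rearranging yields $\partial_t u+du(X)=-u\cdot\mathrm{div}_\mu(X)$, and the initial condition $u(0,x)=P_0f(x)=f(x)$ follows from $P_0=\mathrm{Id}$.

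The main obstacle is analytic rather than computational: the formula for $A$ is derived by a pointwise, smooth differentiation, whereas $A$ is genuinely an $L^1$ generator defined through a norm limit on $\dom(A)$. I would therefore restrict to sufficiently regular $f$ (e.g.\ $C^1$ with compact support, as the hypothesis that $f$ be differentiable signals) and verify that the difference quotients converge in $L^1$ and not merely pointwise; this requires uniform control of $J_{-t}$ and its $t$-derivative on compact sets, which follows from smooth dependence of the flow on time and initial conditions. The remaining ingredients (the change of variables, the product and chain rules, and the Lie-derivative identity) are standard.
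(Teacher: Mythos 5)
Your proposal is correct and follows essentially the same route as the paper: both rewrite $\int_{\varphi_{-t}(E)}f\,\mu$ by pulling back to $E$, obtain the pointwise identity $(P_tf)\,\mu=(f\circ\varphi_{-t})\,\varphi_{-t}^*\mu$, and differentiate at $t=0$ using the product rule together with $\tfrac{d}{dt}\big|_{t=0}\varphi_{-t}^*\mu=-\mathcal{L}_X\mu=-\mathrm{div}_\mu(X)\,\mu$, then derive the PDE from the semigroup/commutation property. Your version is if anything more careful than the paper's, since you make the chain-rule and Lie-derivative steps explicit and flag the $L^1$-versus-pointwise convergence issue that the paper leaves implicit.
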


\begin{proof}
    Note that \eqref{eq:Cont_FP} holds as $AP_t = P_tA$, so it remains to show \eqref{eq:Cont_inf_gen}. Let $f \in L^1$ be an initial probability distribution function, then
    \begin{equation*}
        \int_E (P_t f) \mu = \int_{\varphi_{-t}(E)} f\mu
        = \int_{E} (\varphi_{-t})^* f \cdot  (\varphi_{-t})^* \mu,
    \end{equation*}
    which provides $(P_tf) \mu = (\varphi_{-t})^* f \cdot \! (\varphi_{-t})^* \mu$. Differentiating both sides with respect to $t$ and applying the product rule yields  \vspace*{-3mm}
    \begin{equation*}
        (Af)\mu = -df(X)\mu - f\cdot\mathrm{div}_\mu(X)\mu.
    \end{equation*}
    This immediately implies \eqref{eq:Cont_inf_gen} as $\mu\ne 0$.
\end{proof}


\subsection{Reduction}\label{subsec:reduction}
    Numerical computations of the Frobenius-Perron operator become costly with increasing dimension. This is why reduction techniques should be employed to make numerical simulations more tractable. At its core, the theory of reduction tries to deal with the following question: {How can we reduce the dimensionality of a system, if there is access to additional information?} There are different approaches to answering this question, depending on the type of insight available. In this work, we deal with systems that exhibit symmetry. Hybrid systems with symmetry are present in a myriad of examples, ranging from multi-legged locomotion \cite{animal_gaits_1999,robot_symmetry_ZR}, image recognition and robotic perception \cite{robotic_perception} to climate dynamics \cite{el_nino}. 

Mathematically, symmetries are expressed as actions of a Lie group on the state space $Q$, which is assumed to be a manifold. From here, depending on what kind of structure is available on the configuration manifold there are different reduction techniques that can be applied. For instance, if the manifold is the cotangent bundle equipped with a symplectic form then we are in the realm of cotangent reduction \cite{ctg_reduction1}. On the other hand, if we are dealing with a Poisson manifold, then techniques of Poisson reduction can be applied. A particular case that fits in both of these scenarios is Lie-Poisson reduction, which deals with situations when the state space is  a Lie group, acting on itself by left or right translation \cite{marsden_ratiu}. For example, this is the case for systems whose coordinates are Cartesian ($Q = \mathbb{R}^n$), polar ($Q = \mathbb{S}^1$, $\t{SO}_n$, etc.), or a combination 
($Q = \t{SE}_n$). Classical Lie-Poisson reduction is capable of reducing the dimension from $2n$ down to $n$.

Recent efforts have been made towards creating a theory of cotangent reduction for hybrid systems \cite{Ames2006HybridCB}. Other versions of reduction have been carried out for impact systems, e.g. \cite{ames2006,leo_reduction,EYREAIRAZU202194} and the references therein; however, there is still need for an analysis of Lie-Poisson reduction for hybrid systems. As will be discussed in Section \ref{sec:reduction} for hybrid systems on Lie groups, the best reduction we can attain is from $2n$ to $n + 1$ dimensions.

    \subsubsection{Lie-Poisson Reduction}
    In the following we present the continuous version of Lie Poisson reduction. Building upon this prerequisite, we will develop hybrid Lie Poisson reduction in Section \ref{subsec:chikos}. 
Let $G$ be a Lie group and denote by $\mathfrak{g}$ and $\mathfrak{g}^*$ its Lie algebra and dual, respectively. 
\begin{minipage}{.44\linewidth}
    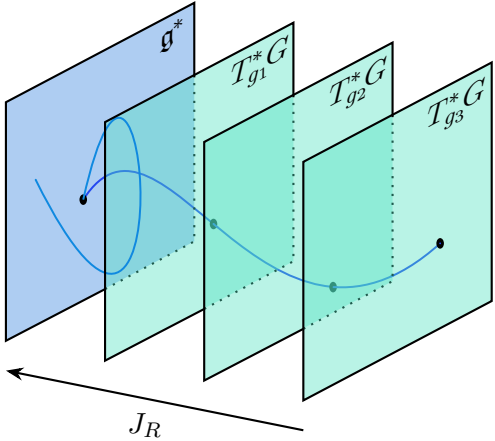
\begin{figure}[H]
        \centering
        \begin{tikzpicture}[x=0.75pt,y=0.75pt,yscale=-1,xscale=1]
    \fill[fill=MariaBlue, fill opacity=0.45] (100,220) -- (100,100) -- (195,50) -- (195,170);
    \useasboundingbox(95,47.5) rectangle (350,270);
    \draw [blue!50!MariaBlue, line width=0.75] (139,149) .. controls (179,87) and (231,250) .. (319,171);
    \fill[blue!50!MariaBlue] (139,149) circle [x radius = .12mm, y radius = .12mm];
    \fill (204.7,161.25) circle [x radius = .5mm, y radius = .7mm];
    \fill[blue!50!MariaBlue] (204.7,161.25) circle [x radius = .5, y radius = .5];
    \draw[blue!50!MariaBlue, line width=0.75] (204.7,161.25) -- (206.9,163.1);
    
    \fill (265,193) circle [x radius = .5mm, y radius = .7mm];
    \fill[blue!50!MariaBlue] (265.2,193) circle [x radius = .12mm, y radius = .12mm];
    \draw[blue!50!MariaBlue, line width=0.75] (265.2,193) -- (267.2,193);
    
    \draw[blue!50!MariaBlue, line width=0.75] (294,187.725) -- (296,186.85);

    \draw[thick, dash pattern={on 0.84pt off 2.51pt}] (150,194) -- (195,170) -- (195,87.5);
    \fill[fill=MariaGreen, fill opacity=0.40] (150,230) -- (150,110) -- (245,60) -- (245,180);

    \draw[thick, dash pattern={on 0.84pt off 2.51pt}] (200,204) -- (245,180) -- (245,97.5);
    \fill[fill=MariaGreen, fill opacity=0.40] (200,240) -- (200,120) -- (295,70) -- (295,190);

    \draw[thick, dash pattern={on 0.84pt off 2.51pt}] (250,214) -- (295,190) -- (295,107.5);
    \fill[fill=MariaGreen, fill opacity=0.40] (250,250) -- (250,130) -- (345,80) -- (345,200);
    \fill (139,149) circle [x radius = .5mm, y radius = .7mm];
    
    \draw [color={rgb, 255:red, 16; green, 137; blue, 224}, line width=0.75] (139,149) .. controls (171,6) and (192,294) .. (115,139);
    \fill[color={rgb, 255:red, 16; green, 137; blue, 224}] (115,139) circle [x radius = .12mm, y radius = .12mm];
    
    \draw[thick] (150,194) -- (100,220) -- (100,100) -- (195,50) -- (195,87.5);
    \draw[shift={(50,10)}, thick] (150,194) -- (100,220) -- (100,100) -- (195,50) -- (195,87.5);
    \draw[shift={(100,20)}, thick] (150,194) -- (100,220) -- (100,100) -- (195,50) -- (195,87.5);
    \draw[shift={(150,30)}, thick] (100,220) -- (100,100) -- (195,50) -- (195,170) -- cycle;
    
    \fill (319,171) circle [x radius = .5mm, y radius = .7mm];
    \fill[blue!50!MariaBlue] (319,171) circle [x radius = .12mm, y radius = .12mm];
    
    \draw (184,66) node [anchor = center, inner sep=0.75pt, yslant = .531, xslant = .025]{$\mathfrak{g}^{\hspace{-.3mm}*}$};
    \draw (184,66) node [anchor = center, inner sep=0.75pt, yslant = .531, xslant = .025, shift={(45,-39)}]{$T_{g_1}^*\!G$};
    \draw (184,66) node [anchor = center, inner sep=0.75pt, yslant = .531, xslant = .025, shift={(95.5,-76)}]{$T_{g_2}^*\!G$};
    \draw (184,66) node [anchor = center, inner sep=0.75pt, yslant = .531, xslant = .025, shift={(146,-113)}]{$T_{g_3}^*\!G$};
    
    \draw[thick, Stealth-](100,235) -- (250,265);
    \draw (160,255) node [anchor=north west, inner sep=0.75pt]{$J_R$};
\end{tikzpicture}
        \caption{Schematic drawing of continuous Lie-Poisson reduction.}%
        \label{fig:reduction scheme}
    \end{figure}
\end{minipage} \hfill
\begin{minipage}{.525\linewidth}
    Consider the action of $G$ on $T^*G$ defined by the lift of left translations: $g \cdot \alpha = \ell_{g^{-1}}^*\alpha$. The momentum map of this action and the momentum map for the lift of right translations are
    \begin{align*}
        J_L \colon & T^*G \to \mathfrak{g}^* : \alpha_g \mapsto r_g^*\alpha_g, \\
        J_R \colon & T^*G \to \mathfrak{g}^* : \alpha_g \mapsto  \ell_g^*\alpha_g, 
    \end{align*}
    respectively. 
    The cotangent bundle is equipped with the canonical symplectic form $\omega = dg\wedge dp_g$, which gives raise to the usual Poisson bracket 
    \begin{align*}
        \{\cdot, \cdot\} 
        \colon & C^\infty(T^*G)\times C^\infty(T^*G) \to C^\infty(T^*G) \\
        \colon & (f, k) \mapsto \omega(X_f, X_k),
    \end{align*}
    where $X_f$ and $X_k$ are the Hamiltonian vector fields of \hfill $f$ \hfill and \hfill $k$, \hfill respectively. \hfill We \hfill assume \hfill that \hfill the 
\end{minipage} \\
functions in $C^\infty(T^*G)$ are left invariant. Then, using the right momentum map, one can project the dynamics on $T^*G$ to $\mathfrak{g}^*$ without losing essential information about the full trajectories (Figure \ref{fig:reduction scheme}).

The quotient manifold $T^*G/G$ is isomorphic to the dual of the Lie algebra $\mathfrak{g}^*$ \cite[Chapter~13.3]{marsden_ratiu} and can be showed to be a Poisson manifold with Poisson bracket given by  \cite{Marsden1983}
\begin{equation*}
   \{f, k\} (\mu) = \langle \mu, [df, dk]\rangle, \qquad \text{for }  f, k \in C^\infty(\mathfrak{g}^*).
\end{equation*}
where the bracket $[df, dk]$ denotes the Lie bracket in $\mathfrak{g}$. Note that the differentials $dk$ and $df$ can indeed be considered elements of the Lie algebra, due to the identification of $\mathfrak{g}^{**}$ with $\mathfrak{g}$.

Now, assume that the dynamics on the full cotangent bundle is given by a left invariant Hamiltonian $H \colon T^*G \to \mathbb{R}$. Using the available Poisson structure, it is possible to define a reduced dynamical system on $\mathfrak{g}^*$. This is summarized in the following theorem.

\begin{theorem}\label{thm:cont reduction}
    Let $H \colon T^*G \to \mathbb{R}$ be a left invariant Hamiltonian, so $H(q, p_q) = H(gq, \ell_{g}^*p_q)$ for all $g \in G$ and $(q, p_q) \in T^*G$. Then, the restriction of $H$ to $\mathfrak{g}^*$, denoted by $h = H|_{\mathfrak{g}^*}$, satisfies
    \begin{align*}
        {\color{white}H(q, p_q) = h (J_R(q, p_q)),}H = h \circ J_R 
        \qquad \t{if and only if} \qquad
        H(q, p_q) = h (J_R(q, p_q)),{\color{white}H = h \circ J_R }
    \end{align*}
    and the flow $\Phi_t$ of $H$ on $T^*G$ is related to the flow $\phi_t$ of $h$ on $\mathfrak{g}^*$ by
    \begin{align*}
        J_R \circ \Phi_t = \phi_t \circ J_R.
    \end{align*}
    Moreover, the reduced equations of motion on $\mathfrak{g}^*$ can be written as
    \begin{equation}\label{eq:Lie_poisson_cont}
        \frac{d}{dt}\phi(t) (\mu_0) = \dot{\mu} = \ad^*_{dh}\mu,
    \end{equation}
    where $\ad_{dh}^* \colon \mathfrak{g}^*\to \mathfrak{g}^*$ and $\ad^*_{dh}\mu (\xi) = \mu([dh, \xi])$ for all $\xi \in \mathfrak{g}^*$.
\end{theorem}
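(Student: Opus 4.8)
The plan is to prove the three assertions in sequence: the factorization $H = h\circ J_R$, the intertwining $J_R\circ\Phi_t = \phi_t\circ J_R$, and the explicit coadjoint form $\dot\mu = \ad^*_{dh}\mu$.

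First I would establish that the right momentum map $J_R(\alpha_g) = \ell_g^*\alpha_g$ is invariant under the lifted left action $g\cdot\alpha = \ell_{g^{-1}}^*\alpha$. A direct computation gives $J_R(h\cdot\alpha_g) = \ell_{hg}^*\ell_{h^{-1}}^*\alpha_g = (\ell_{h^{-1}}\circ\ell_{hg})^*\alpha_g = \ell_g^*\alpha_g = J_R(\alpha_g)$, using $\ell_{h^{-1}}\circ\ell_{hg} = \ell_g$. Moreover $J_R$ restricts to the identity on the fiber $\mathfrak{g}^* = T_e^*G\subset T^*G$, since $\ell_e = \Id$. Combining these two facts, left-invariance of $H$ yields, for every $\alpha_g$, that $H(\alpha_g) = H(\ell_g^*\alpha_g) = H(J_R(\alpha_g)) = h(J_R(\alpha_g))$, because $\ell_g^*\alpha_g = J_R(\alpha_g)$ lies in $\mathfrak{g}^*$ where $H$ agrees with $h = H|_{\mathfrak{g}^*}$. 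This proves $H = h\circ J_R$ (equivalently, in pointwise form, $H(q,p_q) = h(J_R(q,p_q))$); the converse is immediate, since any $h\circ J_R$ is left-invariant by the invariance of $J_R$.

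Next, the structural fact driving the reduction is that $J_R$ is a Poisson map from $(T^*G,\{\cdot,\cdot\}_\omega)$ onto $(\mathfrak{g}^*,\{\cdot,\cdot\})$ with the Lie-Poisson bracket $\{f,k\}(\mu)=\langle\mu,[df,dk]\rangle$; I would invoke the classical reduction result of \cite{marsden_ratiu,Marsden1983} for this, taking care that the sign convention places $J_R$ into the correct ($+$) Lie-Poisson structure. Granting this and the factorization $H = h\circ J_R$, the Hamiltonian vector fields become $J_R$-related: for any $g\in C^\infty(\mathfrak{g}^*)$ one has $\big(TJ_R\cdot X_H\big)(g) = X_H(g\circ J_R) = X_{h}(g)\circ J_R$, where the middle equality uses the Poisson property together with $H = h\circ J_R$, so that $TJ_R\circ X_H = X_h\circ J_R$. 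Because $J_R$-relatedness of two vector fields is inherited by their flows, this immediately gives $J_R\circ\Phi_t = \phi_t\circ J_R$.

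Finally, to extract the explicit reduced equation I would compute $X_h$ directly in the Lie-Poisson bracket. For arbitrary $F\in C^\infty(\mathfrak{g}^*)$, the chain rule gives $\tfrac{d}{dt}F(\mu) = \langle\dot\mu,dF\rangle$, while the dynamics give $\tfrac{d}{dt}F(\mu) = \{F,h\}(\mu) = \langle\mu,[dF,dh]\rangle$; invoking the definition $\ad^*_{dh}\mu(\xi) = \mu([dh,\xi])$ and antisymmetry of the bracket then rearranges this identity into $\dot\mu = \ad^*_{dh}\mu$, where one uses the identification $\mathfrak{g}^{**}\cong\mathfrak{g}$ so that $dF$ and $dh$ are read as Lie algebra elements. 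The step I expect to be the main obstacle is not any single computation but the consistent bookkeeping of signs and identifications: the conventions in $i_X\omega = dH$, in the Poisson bracket $\omega(X_f,X_k)$, in $\ad^*$, and in the choice of $J_R$ over $J_L$ all interact, and they must be threaded together precisely so that $J_R$ is Poisson into the $+$ structure and the final equation reads $\dot\mu = +\ad^*_{dh}\mu$ rather than its negative.
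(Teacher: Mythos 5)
Your proposal is correct and is essentially the standard argument behind the result the paper itself does not prove but simply defers to Theorem 13.4.1 of Marsden--Ratiu: invariance of $J_R$ under the lifted left action plus $J_R|_{\mathfrak{g}^*}=\mathrm{Id}$ gives the factorization, the Poisson property of $J_R$ gives $J_R$-relatedness of $X_H$ and $X_h$ and hence the intertwining of flows, and evaluating the Lie--Poisson bracket gives the coadjoint equation. The sign caveat you flag at the end is a real one --- with the bracket $\{f,k\}(\mu)=\langle\mu,[df,dk]\rangle$ and $\dot F=\{F,h\}$ exactly as written in the paper one lands on $\dot\mu=-\ad^*_{dh}\mu$, so the conventions do have to be adjusted (e.g.\ the minus Lie--Poisson structure on $T^*G/G$) to match \eqref{eq:Lie_poisson_cont} --- but this is an inconsistency in the paper's stated conventions rather than a gap in your argument.
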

\begin{proof}
    See Theorem 13.4.1 in \cite{marsden_ratiu}.
\end{proof}
\begin{example}
    Consider the action of $\t{SO}_3$ on itself. The Lie algebra of $\t{SO}_3$ is $\mathfrak{so}_3 \cong \R^3$ with Lie bracket $[u, v] = u \times v$ and its dual $\mathfrak{so}_3^* \cong \R^3$ consists of row vectors $p$. The right momentum map is $J_R(A, p) = pA$, and the reduced Poisson bracket on $\mathfrak{so}_3$ is: $\{f, k\}(p) = p (df \times dk) = p^T \cdot (df \times dk)$. The reduced equations of motion \eqref{eq:Lie_poisson_cont} become
    $\dot{\mu}^T = \ad^*_{dh} \mu = \mu^T \times dh$ or $\dot{\mu} = \mu \times dh$ if  $\mu$ is viewed here as a column vector.
\end{example}

In Section \ref{sec:reduction}, we will extend the reduction presented above to hybrid systems.

\section{Results}\label{sec:results}
   Our results are twofold. First, we define the hybrid Frobenius-Perron operator, which is the analogous operator to the continuous case, extended for when the underlying dynamics is hybrid. Then, we extend Lie-Poisson reduction to hybrid systems whose state space is a Lie group.

\subsection{Hybrid Frobenius-Perron and Koopman operators}\label{subsec:hFP}
    \label{subsec:chikos}
In this section, we derive the infinitesimal generator for the Frobenius-Perron operator of hybrid systems. As hybrid systems are a mixture of both discrete- and continuous-time systems, the infinitesimal generator will be a combination of both \eqref{eq:discFP} and \eqref{eq:Cont_FP}. 

\begin{definition}\label{def:hybrid_FP}
    Let $\varphi_t^\mathcal{H} \colon M \to M$ be a hybrid flow on the differentiable manifold $M$. Assume there exists a differentiable volume form $\mu \in \Omega^n(M)$ such that $(M, \mu)$ is a measure space and that the flow $\varphi_t^\mathcal{H}$ is complete. Then, the hybrid Frobenius-Perron operator corresponding to $\varphi_t^\mathcal{H}$ is the unique linear operator $P_t^\mathcal{H} \colon L^1(M, \mu) \to L^1(M, \mu)$ implicitly defined by
    \begin{equation}\label{eq:hybrid_FP_def}
        {\color{white} \t{for all } E \subseteq M.}\qquad
        \int_E P_t^\mathcal{H}f \, d\mu = \int_{\varphi_{-t}^\mathcal{H} (E)}\!\!\!f \, d\mu \qquad \t{for all } E \subseteq M.\qquad
    \end{equation}
    When $P_t^\mathcal{H}$ is continuous, the hybrid infinitesimal generator is defined to be
    \begin{equation}\label{eq:hybrid_inf_def}
        A^\mathcal{H}f = \lim_{\Delta t \to 0}\frac{P_{t + \Delta t}^\mathcal{H}f - P_{t}^\mathcal{H}f}{\Delta t}.
    \end{equation}
\end{definition}

As in the continuous case, our goal is to find an explicit formula for the infinitesimal generator. Moreover, as in Theorem \ref{thm:inf_gen} we want to find a partial differential equation for $u(t, x) \coloneqq P_t^\mathcal{H} f(x)$.

\begin{remark}\label{remark:dimensionality}
    Before we proceed, we will further examine \eqref{eq:hybrid_inf_def}. Assume $E$ is a sufficiently small ball about some point $x \in M \smm \mathcal{S}$, and that $\varphi_{-t}^\mathcal{H}(E) \cap \mathcal{S} = \varnothing$ for some time $t \in \R$. Then, within $E$, the dynamics of point $x$ follows the continuous part of $\varphi_t^\mathcal{H}$, which is smooth by the quasi-smooth dependence propriety. Since we are taking the limit as $\Delta t \to 0$, we can assume that $\varphi_{s}^\mathcal{H}(E)\cap \mathcal{S} = \varnothing $ for $-t - \Delta t \leq s \leq -t $. Hence, the hybrid infinitesimal generator should be the same as in the continuous case. 

    Now consider what happens when $E \cap \mathcal{S} \neq \varnothing$. At a first glance, it is tempting to define the transfer operator as the one corresponding to the discrete reset map $\Delta$; however, the guard $\mathcal{S}$ as well as its tangent space is $n- 1$ dimensional, whereas the image of the reset map is $M$ is $n$ dimensional. This is an issue since both sides of \eqref{eq:hybrid_FP_def} integrate $\mu$, which is a volume form on $M$. In order to deal with this mismatch in dimensionality, we will extend the differential of $\Delta$, so that it acts on vectors in the tangent space of $M$ restricted to base points in $\mathcal{S}$. We will do this by taking advantage of the fact that any vector in the tangent space at $x \in \mathcal{S}$ can be decomposed into its component along the flow $v_x$, and $v_s \in T_x\mathcal{S}$ as in Figure \ref{fig:extended_differential}. 
    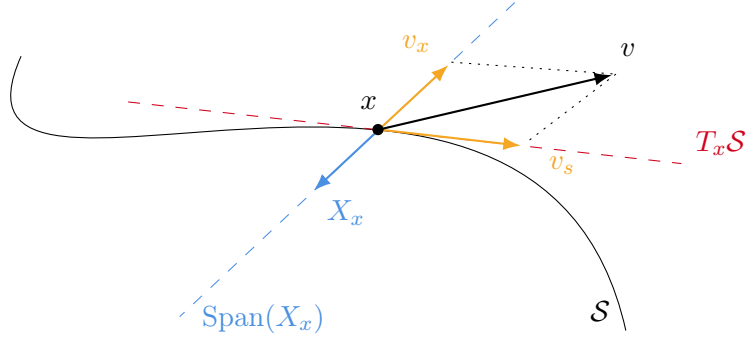
\begin{figure}[!ht]
        \centering
        \begin{tikzpicture}[x=0.75pt,y=0.75pt,yscale=-1,xscale=1]
    \draw (124,48.1) .. controls (75,158.1) and (385,-10.9) .. (429,186.1);
    
    \draw [color={rgb, 255:red, 208; green, 2; blue, 27 }, dash pattern={on 4.5pt off 4.5pt}]  (178,71.1) -- (457,102.1) ;
    
    \draw [thick, -Latex, MariaBlue] (304.5,85) -- (271.51,115.79);
    
    \draw[MariaBlue] [dash pattern={on 4.5pt off 4.5pt}]  (373,21.1) -- (204,179.1);
    
    \draw [thick,-Latex,color={rgb, 255:red, 245; green, 166; blue, 35}] (304.5,85) -- (376.01,93.1);

    \draw[thick,-Latex] (304.5,85) -- (422.05,57.56);
    
    \draw [thick,-Latex,color={rgb, 255:red, 245; green, 166; blue, 35 }] (304.5,85) -- (339.58,52.5);
    
    \draw  [dash pattern={on 0.84pt off 2.51pt}] (424,57.1) -- (378,92.1);
    \draw  [dash pattern={on 0.84pt off 2.51pt}] (341,51.1) -- (424,57.1);
    
    \node at (304,85) [circle,fill,inner sep=1.5pt]{};
    
    \draw (410,170) node [anchor=north west, inner sep=0.75pt] {$\mathcal{S}$};
    \draw (294,67) node [anchor=north west, inner sep=0.75pt] {$x$};
    \draw (463,84.4) node [anchor=north west, inner sep=0.75pt] {$\textcolor[rgb]{0.82,0.01,0.11}{T_{x}\mathcal{S}}$};
    \draw (277,119.4) node [anchor=north west, inner sep=0.75pt,MariaBlue] {$X_x$};
    \draw (214,171.4) node [anchor=north west, inner sep=0.75pt,MariaBlue] {$\text{Span}(X_x)$};
    \draw (425,40.4) node [anchor=north west, inner sep=0.75pt] {$v$};
    \draw (389,97.4) node [anchor=north west, inner sep=0.75pt] {$\textcolor[rgb]{0.96,0.65,0.14}{v_{s}}$};
    \draw (315,36.4) node [anchor=north west, inner sep=0.75pt] {$\textcolor[rgb]{0.96,0.65,0.14}{v_{x}}$};
\end{tikzpicture}
        \caption{Decomposition of vectors in the tangent space of the ambient manifold $M$, with base point on the guard.}
        \label{fig:extended_differential}
    \end{figure}
 
\end{remark}

In the following, we will make the decomposition from the previous remark more precise. 

\begin{definition}[Extended differential]\label{def:Extended differential}
    Let $\mathcal{H} = (M, \mathcal{S}, \Delta, X)$ be a smooth HDS that satisfies propriety \ref{def:Quasi-smooth Dependence Property} and let $x \in \mathcal{S}$. The augmented differential of the reset map is the linear map
    \begin{align*}
        \Delta_*^X \colon T_xM \to T_{\Delta(x)}M : v \mapsto 
        \begin{cases}
            \Delta_* v  & \text{if }  v \in T_x\mathcal{S}; \\
            c X_{\Delta(x)} & \text{if } v = cX_x \in \t{\textup{Span}}(X_x).
        \end{cases}
    \end{align*}
\end{definition}

When defining the discrete part of the Frobenius-Perron operator, we will use the Jacobian of the augmented differential rather than the Jacobian of the reset map. This fixes the dimensionality issue, and is indeed the correct procedure as will become apparent in the proof of Theorem \ref{thm:restricted chikos} below.

The remainder of this section is dedicated to finding a hybrid analog of  formula \eqref{eq:discFP}. The result is the summarized in Theorem \ref{thm:general chikos}, and is the main contribution of this section. As in the discrete case, the goal is to perform a change of coordinates in the integral \eqref{eq:hybrid_FP_def} so as to obtain an explicit formula for the Frobenius-Perron operator. Due to the dimensionality issues presented in Remark \ref{remark:dimensionality}, this is not straightforward, and will be carefully analysed in the following. 

We begin by giving a precise definition of the determinant for general manifolds. The determinant will be important later on, when extending the change of variables formula from the discrete case \eqref{eq:discFP}.
    \begin{definition}[Form determinant \cite{sternberg1980ralph}]\label{def:Form Determinant}
    Let $M$ and $N$ be $n$-dimensional smooth manifolds with volume-forms $\mu \in \Omega^n(M)$ and $\eta \in \Omega^n(N)$. Let $F \colon TM \to TN$ be a smooth map which is linear in the fibers. Then the determinant of $F$ with respect to $\mu$ and $\eta$ is defined to be the unique $\mathcal{C}^\infty(M)$ function such that
    \begin{equation*}
        \det_{\mu \to \eta }(F)\cdot\mu
        = F^*\eta
    \end{equation*}
    Moreover, if $F \colon TM\to TM$ then we will write $\displaystyle{\det_{\mu \to \mu}} (F)$ as $\det_\mu (F)$.
\end{definition}
\begin{remark}
    If $f \colon M \to N$ is a smooth map, $\displaystyle{\det_{\mu\to\eta}} (f_*)$ is the usual Jacobian determinant.
\end{remark}

\begin{remark}
    The existence and uniqueness of the determinant comes from the fact that both $\mu$ and $F^*\eta$ are top-forms on $M$. The determinant does not depend on the vectors that $\mu$ and $F^*\eta$ act on. In particular, 
    \begin{align*}
        \quad \quad  \det_{\mu \to \eta }(F) \cdot \mu (v^1, \dots v^n) = F^*\eta(v^1, \dots v^n) \qquad \text{for all }v^1, \ldots, v^n \in T_xM.
    \end{align*}
\end{remark}

When $F = f_*$, there is an elegant formula for the determinant of the inverse transformation.
\begin{lemma}
    \label{lemma:Relation Between Form Determinant and Inverse Form Determinant}
    For a diffeomorphism $f \colon M \to N$ and volume-forms $\mu \in \Omega^n(M)$ and $\eta \in \Omega^n(N)$,
    \begin{equation*}  
        \det_{\mu \to \eta }(f_*^{-1}) = \frac{1}{ \underset{\eta \to \mu}{\det} (f_*) \circ f^{-1}} .
    \end{equation*}
\end{lemma}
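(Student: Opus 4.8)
The plan is to derive the identity straight from Definition \ref{def:Form Determinant} together with the contravariant functoriality of pullback; it is simply the coordinate-free avatar of the elementary fact $\det D(f^{-1})(y)=1/\det Df(f^{-1}(y))$. First I would read $f_*^{-1}$ as the differential $(f^{-1})_*\colon TN\to TM$ of the inverse diffeomorphism, which covers the base map $f^{-1}\colon N\to M$. I would also record the small but essential point that makes the definition applicable: for a bundle map of the form $F=g_*$ the pullback $F^*\omega$ appearing in Definition \ref{def:Form Determinant} coincides with the ordinary pullback $g^*\omega$, since both evaluate $\omega$ on the pushed-forward frame. Thus Definition \ref{def:Form Determinant} applies verbatim to $f_*$ and to $f_*^{-1}$.

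Next I would name the two objects in the statement: abbreviate $J_f\coloneqq\det_{\eta\to\mu}(f_*)\in C^\infty(M)$ and $J_{f^{-1}}\coloneqq\det_{\mu\to\eta}(f_*^{-1})\in C^\infty(N)$. By Definition \ref{def:Form Determinant} these are characterised by the nonvanishing-top-form identities $J_f\cdot\mu=f^*\eta$ and $J_{f^{-1}}\cdot\eta=(f^{-1})^*\mu$, so the assertion to be proved is exactly the scalar identity $J_{f^{-1}}=1/(J_f\circ f^{-1})$ on $N$.

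The crux is to splice these two characterising relations. I would pull the second one back along $f$ and invoke contravariance, $f^*\circ(f^{-1})^*=(f^{-1}\circ f)^*=\operatorname{id}_M^*$, to compute
\[ (J_{f^{-1}}\circ f)\cdot f^*\eta=f^*\!\big(J_{f^{-1}}\cdot\eta\big)=f^*(f^{-1})^*\mu=\mu. \]
Substituting $f^*\eta=J_f\cdot\mu$ on the left gives $(J_{f^{-1}}\circ f)\cdot J_f\cdot\mu=\mu$, and because $\mu$ is a nowhere-vanishing top form I may cancel it to obtain the pointwise scalar equality $(J_{f^{-1}}\circ f)\cdot J_f=1$ on $M$; precomposing with $f^{-1}$ then isolates $J_{f^{-1}}=1/(J_f\circ f^{-1})$ on $N$, which is the claim. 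I do not expect a genuine obstacle here, since the statement is short; the only thing requiring care is the bookkeeping—keeping straight that $J_f$ lives on $M$ while $J_{f^{-1}}$ lives on $N$, getting the order right in the contravariant composition $f^*\circ(f^{-1})^*$, and not conflating $J_{f^{-1}}$ with $J_{f^{-1}}\circ f$—together with the one explicit remark that the bundle-map pullback $f_*^*\eta$ equals the ordinary pullback $f^*\eta$, so that Definition \ref{def:Form Determinant} can be used in the first place.
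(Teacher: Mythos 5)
Your argument is correct and is essentially identical to the paper's proof in Appendix~B: both start from the two defining relations of the form determinant, pull the relation for $f_*^{-1}$ back along $f$, use $f^*\circ(f^{-1})^*=\mathrm{id}_M^*$ together with the fact that pullback of a top form by $f$ multiplies scalar functions by precomposition with $f$, and then cancel the nowhere-vanishing $\mu$. Your explicit remark that the subscripts must be read so that $\det_{\eta\to\mu}(f_*)$ lives on $M$ and $\det_{\mu\to\eta}(f_*^{-1})$ lives on $N$ is a helpful clarification of the paper's (slightly inconsistent) notation, but it does not change the route.
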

\begin{proof}
    See Appendix \ref{lemma:Relation Between Form Determinant and Inverse Form Determinant Proof}.
\end{proof}
We now turn to the case where $f \colon \mathcal{S} \to M$ is the reset map $\Delta$. This is precisely where the augmented differential becomes important since it offers a way to relate the determinant of the reset map with respect to the induced measures on $\mathcal{S}$ and $\Delta(\mathcal{S})$ to the determinant of a linear map from $TM$ to $TM$ with respect to just $\mu$. We formalize this through the following lemma.

\begin{lemma}[Transverse property]
    \label{lemma:Transverse Property}
    Let $\Delta_*^X \colon TM|_\mathcal{S}\to TM|_{\Delta(\mathcal{S})}$ be the augmented differential and let $\iota_\mathcal{S} \colon \mathcal{S} \hookrightarrow M$ and $\iota_{\Delta(\mathcal{S})} \colon \Delta(\mathcal{S}) \hookrightarrow M$ denote the respective inclusion maps.
    If $X$ is transverse to $\mathcal{S}$, then
    \begin{equation*}
        \det _{\mu\to\mu} \Delta^X_* = \det _{\alpha \to \beta } \Delta_*
    \end{equation*}
    where $\alpha = \iota^*_\mathcal{S}i_X\mu$, $\beta = \iota^*_{\Delta(\mathcal{S})}i_X\mu$.
\end{lemma}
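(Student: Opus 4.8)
The plan is to verify the identity pointwise on $\mathcal{S}$, evaluating both form determinants on a single adapted basis of $T_xM$ and reconciling the two. Fix $x \in \mathcal{S}$. Since $X$ is transverse to $\mathcal{S}$, the vector $X_x$ does not lie in $T_x\mathcal{S}$, so $T_xM = T_x\mathcal{S} \oplus \mathrm{Span}(X_x)$. I would choose any basis $e_1, \dots, e_{n-1}$ of $T_x\mathcal{S}$ and adjoin $X_x$ to obtain a basis $(e_1, \dots, e_{n-1}, X_x)$ of $T_xM$. The whole argument then reduces to plugging this basis into the defining relation of each determinant from Definition \ref{def:Form Determinant}.

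First I would unwind the left-hand side. By definition, $\det_{\mu\to\mu}(\Delta_*^X)(x)$ is the scalar satisfying
\[
\det_{\mu\to\mu}(\Delta_*^X)(x)\cdot \mu_x(e_1,\dots,e_{n-1},X_x) = \mu_{\Delta(x)}\big(\Delta_*^X e_1, \dots, \Delta_*^X e_{n-1}, \Delta_*^X X_x\big),
\]
where, by Definition \ref{def:Extended differential}, the augmented differential acts tangentially as $\Delta_*^X e_i = \Delta_* e_i \in T_{\Delta(x)}\Delta(\mathcal{S})$ and transversally as $\Delta_*^X X_x = X_{\Delta(x)}$. Next I would translate each $\mu$-evaluation into an $\alpha$- or $\beta$-evaluation through the interior product. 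Since $\alpha = \iota_\mathcal{S}^* i_X \mu$, we have $\alpha_x(e_1,\dots,e_{n-1}) = \mu_x(X_x, e_1, \dots, e_{n-1})$, and sliding $X_x$ past the remaining $n-1$ vectors gives $\mu_x(e_1, \dots, e_{n-1}, X_x) = (-1)^{n-1}\alpha_x(e_1, \dots, e_{n-1})$. The identical manipulation with $\beta = \iota_{\Delta(\mathcal{S})}^* i_X \mu$ gives $\mu_{\Delta(x)}(\Delta_* e_1, \dots, \Delta_* e_{n-1}, X_{\Delta(x)}) = (-1)^{n-1}\beta_{\Delta(x)}(\Delta_* e_1, \dots, \Delta_* e_{n-1})$. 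Substituting both into the displayed relation, the factors $(-1)^{n-1}$ cancel, leaving
\[
\det_{\mu\to\mu}(\Delta_*^X)(x)\cdot \alpha_x(e_1,\dots,e_{n-1}) = \beta_{\Delta(x)}(\Delta_* e_1, \dots, \Delta_* e_{n-1}).
\]
Comparing with the defining relation $\det_{\alpha\to\beta}(\Delta_*)(x)\cdot\alpha_x(e_1,\dots,e_{n-1}) = \beta_{\Delta(x)}(\Delta_* e_1, \dots, \Delta_* e_{n-1})$ and dividing yields the claim.

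The single point requiring care — and the only place transversality is genuinely needed — is precisely that $\alpha_x(e_1,\dots,e_{n-1}) \neq 0$, i.e., that $\alpha$ is an honest volume form on $\mathcal{S}$, so that $\det_{\alpha\to\beta}(\Delta_*)$ is well defined and the final division is legitimate. This follows because $(X_x, e_1, \dots, e_{n-1})$ is a basis of $T_xM$ by transversality, whence $\mu_x(X_x, e_1, \dots, e_{n-1}) \neq 0$. I would also make explicit that $\Delta$, regarded as a map onto the embedded submanifold $\Delta(\mathcal{S})$ from (H.3), pushes $T_x\mathcal{S}$ into $T_{\Delta(x)}\Delta(\mathcal{S})$, which is what makes the $\beta$-evaluations meaningful. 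Beyond this, no subtlety arises: the argument is entirely pointwise, and the sign bookkeeping in the interior-product step is the only computation.
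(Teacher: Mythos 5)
Your proof is correct, and it rests on the same underlying decomposition $T_xM = T_x\mathcal{S}\oplus\mathrm{Span}(X_x)$ and the same interior-product identity as the paper's Appendix~\ref{appendix:A}, but the execution is genuinely leaner. The paper evaluates both determinants on \emph{arbitrary} vectors $v^i = u^i + \alpha^i X_x$ and must then carry out the full multilinear expansion: it shows the all-tangential term $\mu_{\Delta(x)}(\Delta_*u^1,\dots,\Delta_*u^n)$ vanishes because $n$ vectors in the $(n-1)$-dimensional space $T_x\mathcal{S}$ are dependent, discards the terms with two or more copies of $X$ by antisymmetry, and is left matching two alternating sums $\sum_k(-1)^{k-1}\alpha^k i_X\mu(\dots\widehat{u^k}\dots)$ term by term. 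You instead observe that a form determinant is a pointwise scalar identity between top forms, so it suffices to test it on a \emph{single} adapted basis $(e_1,\dots,e_{n-1},X_x)$; the expansion then collapses to one term, the only bookkeeping is the sign $(-1)^{n-1}$ from sliding $X_x$ to the front (which cancels between the $\alpha$- and $\beta$-sides), and the combinatorics disappear entirely. You are also more explicit than the paper about where transversality enters: it guarantees that $(e_1,\dots,e_{n-1},X_x)$ is a basis, hence that $\alpha_x(e_1,\dots,e_{n-1})=\mu_x(X_x,e_1,\dots,e_{n-1})\neq 0$, so that $\alpha$ is a genuine volume form on $\mathcal{S}$ and the final division is legitimate. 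The trade-off is that the paper's computation, by working with arbitrary vectors, exhibits the intermediate quotient formula \eqref{smiley} that it reuses rhetorically; your version proves exactly the stated lemma with less machinery. Both are valid.
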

\begin{proof}
    See Appendix \ref{lemma:Transverse Property Proof}.
\end{proof}

Keeping the same notation as in Lemma \ref{lemma:Transverse Property}, the definition of the determinant gives us
\begin{equation*}
    \det_{\alpha \to \beta}(\Delta_*) \cdot \alpha = \Delta^*\beta
\end{equation*}
and plugging in the expressions for $\alpha$ and $\beta$, we see that
\begin{equation}\label{eqn:this equation}
    \det_{\alpha \to \beta}(\Delta_*) \cdot \iota^*_Si_X\mu = \Delta^*\iota^*_{\Delta(S)} i_X\mu,
\end{equation}
which is precisely the defining equation for the hybrid Jacobian $\mathcal{J}_\mu^X(\Delta)$. Thus, by the uniqueness of the determinant we have
\begin{equation*}
    \det_{\alpha\to\beta}\Delta_* = \mathcal{J}_\mu^X(\Delta),
    \qquad \t{and finally}, \qquad
    \det_{\mu\to\mu} \Delta_*^X = \mathcal{J}_\mu^X(\Delta).
\end{equation*}
In other words, the hybrid Jacobian is equal to the determinant of the augmented differential, as stated precisely in the following lemma.

\begin{lemma}[Relation between determinant and Jacobian]
    \label{lemma:Relation Between Determinant and Jacobian}
    The determinant of the augmented differential is the hybrid Jacobian, i.e.,
    \begin{equation*}
        \det_{\mu\to\mu} \Delta_*^X = \mathcal{J}^X_\mu(\Delta).
    \end{equation*}
\end{lemma}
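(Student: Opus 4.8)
The plan is to chain together three facts already in hand: the definition of the form determinant (Definition \ref{def:Form Determinant}), the defining equation of the hybrid Jacobian (Definition \ref{def:hybrid_jacobian}), and the Transverse Property (Lemma \ref{lemma:Transverse Property}). The guiding observation is that the contracted forms $\alpha = \iota^*_\mathcal{S} i_X\mu$ and $\beta = \iota^*_{\Delta(\mathcal{S})} i_X\mu$ are genuine volume-forms on $\mathcal{S}$ and $\Delta(\mathcal{S})$ precisely because $X$ is transverse to the guard, so that the form determinant $\det_{\alpha\to\beta}(\Delta_*)$ of the honest differential is well-defined and one can compute with it.

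First I would apply Definition \ref{def:Form Determinant} to $\Delta_* \colon T\mathcal{S} \to T\Delta(\mathcal{S})$ equipped with the volume-forms $\alpha$ and $\beta$, obtaining the identity $\det_{\alpha\to\beta}(\Delta_*)\cdot\alpha = \Delta^*\beta$. Next I would rewrite the right-hand side using functoriality of the pullback: since the reset $\Delta \colon \mathcal{S} \to M$ factors as $\iota_{\Delta(\mathcal{S})}\circ\Delta$, with $\Delta$ on the right viewed as a map onto its image, we get $\Delta^*\beta = \Delta^*\iota^*_{\Delta(\mathcal{S})} i_X\mu = \Delta^* i_X\mu$. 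Substituting $\alpha = \iota^*_\mathcal{S} i_X\mu$ on the left then yields $\det_{\alpha\to\beta}(\Delta_*)\cdot\iota^*_\mathcal{S} i_X\mu = \Delta^* i_X\mu$, which is verbatim the defining relation of $\mathcal{J}^X_\mu(\Delta)$ appearing in Definition \ref{def:hybrid_jacobian}. By the uniqueness clause of that definition, the scalar coefficients must coincide, so $\det_{\alpha\to\beta}(\Delta_*) = \mathcal{J}^X_\mu(\Delta)$.

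Finally I would invoke the Transverse Property, which asserts $\det_{\mu\to\mu}\Delta^X_* = \det_{\alpha\to\beta}\Delta_*$, to conclude $\det_{\mu\to\mu}\Delta^X_* = \mathcal{J}^X_\mu(\Delta)$, as claimed.

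The only genuine subtlety—and the step I would treat most carefully—is the bookkeeping of pullbacks through the two inclusion maps, together with the verification that $\alpha$ and $\beta$ really are nonvanishing top-forms on the respective codimension-one submanifolds. Both points rest on transversality of $X$ to $\mathcal{S}$: without it $i_X\mu$ could restrict to zero on $\mathcal{S}$, collapsing $\alpha$ and rendering the form determinant meaningless. Since Lemma \ref{lemma:Transverse Property} already packages the transversality hypothesis and supplies the reduction from $\Delta^X_*$ on all of $TM|_\mathcal{S}$ to $\Delta_*$ on $T\mathcal{S}$, the residual work is the clean functoriality identity $\Delta^*\iota^*_{\Delta(\mathcal{S})} = \Delta^*$, which is immediate once $\Delta$ is viewed consistently as a map into $M$.
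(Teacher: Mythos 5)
Your proposal is correct and follows essentially the same route as the paper: apply the form-determinant definition to $\Delta_*$ with the contracted volume-forms $\alpha = \iota^*_\mathcal{S} i_X\mu$ and $\beta = \iota^*_{\Delta(\mathcal{S})} i_X\mu$, recognize the resulting identity as the defining equation of $\mathcal{J}^X_\mu(\Delta)$, invoke uniqueness, and finish with the Transverse Property. Your extra remark about why transversality keeps $\alpha$ and $\beta$ nonvanishing is a welcome clarification but does not change the argument.
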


Combining lemmas \ref{lemma:Relation Between Determinant and Jacobian} and \ref{lemma:Relation Between Form Determinant and Inverse Form Determinant}, we obtain a formula for how the measure changes under the coordinate transformations $x \mapsto \Delta(x)$.
\begin{lemma}\label{lemma:Hybrid Frobenius Perron Operator}
    Suppose that $\Delta \colon \mathcal{S}\to\Delta(\mathcal{S})$ is invertible and the linear transformation $\Delta_*^X$ is non-singular. Then,
    \begin{equation*}
        \mu_{\Delta^{-1}(x)} \Big(\!\!\left(\Delta_*^X\right)^{-1}\!v^1,\ldots,\left(\Delta_*^X\right)^{-1}\!v^n\Big) = 
        \frac{1}{\big(\mathcal{J}_\mu^X(\Delta) \circ \Delta^{-1}\big)(x)}\cdot \mu_x(v^1, \ldots, v^n),
    \end{equation*}
    for all $v^1,\ldots, v^n \in T_xM|_{\mathcal{S}}$.
\end{lemma}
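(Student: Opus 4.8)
The plan is to unwind the left-hand side using the defining property of the form determinant (Definition \ref{def:Form Determinant}) applied to the inverse bundle map $(\Delta_*^X)^{-1}$, and then to identify the resulting scalar using the two lemmas flagged by the hint, namely Lemma \ref{lemma:Relation Between Determinant and Jacobian} and Lemma \ref{lemma:Relation Between Form Determinant and Inverse Form Determinant}.

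First I would observe that, under the non-singularity hypothesis, $(\Delta_*^X)^{-1} \colon TM|_{\Delta(\mathcal{S})} \to TM|_{\mathcal{S}}$ is a well-defined invertible bundle map covering the diffeomorphism $\Delta^{-1} \colon \Delta(\mathcal{S}) \to \mathcal{S}$. Applying Definition \ref{def:Form Determinant} to this map with the volume form $\mu$ on both ends, and evaluating the pullback $\big((\Delta_*^X)^{-1}\big)^*\mu$ at $x$ on the tuple $(v^1,\ldots,v^n)$, the left-hand side $\mu_{\Delta^{-1}(x)}\big((\Delta_*^X)^{-1}v^1,\ldots,(\Delta_*^X)^{-1}v^n\big)$ is exactly $\det_{\mu\to\mu}\big((\Delta_*^X)^{-1}\big)(x)\cdot\mu_x(v^1,\ldots,v^n)$. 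Thus the whole statement reduces to evaluating the scalar function $\det_{\mu\to\mu}\big((\Delta_*^X)^{-1}\big)$.

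To evaluate it, I would invoke the inverse-determinant relation to obtain
\[
    \det_{\mu\to\mu}\big((\Delta_*^X)^{-1}\big) = \frac{1}{\det_{\mu\to\mu}(\Delta_*^X)\circ\Delta^{-1}},
\]
and then substitute Lemma \ref{lemma:Relation Between Determinant and Jacobian}, which identifies $\det_{\mu\to\mu}(\Delta_*^X) = \mathcal{J}_\mu^X(\Delta)$. This yields $\det_{\mu\to\mu}\big((\Delta_*^X)^{-1}\big)(x) = 1/\big(\mathcal{J}_\mu^X(\Delta)\circ\Delta^{-1}\big)(x)$, and plugging this back into the expression from the previous paragraph produces the claimed identity. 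Throughout, the one piece of bookkeeping to keep straight is base points: the two determinants live over $\mathcal{S}$ and $\Delta(\mathcal{S})$ respectively, which is the source of the $\circ\,\Delta^{-1}$ in the final formula.

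The main obstacle — really the only non-mechanical point — is justifying the inverse-determinant relation, since Lemma \ref{lemma:Relation Between Form Determinant and Inverse Form Determinant} is stated only for the pushforward $f_*$ of a smooth map $f\colon M\to N$, whereas $\Delta_*^X$ is the augmented differential, an invertible bundle map that is \emph{not} the differential of any single smooth map $M\to M$. I expect to handle this either by remarking that the proof of that lemma uses nothing beyond invertibility and the composition (chain-rule) property of the form determinant, both of which hold verbatim for $\Delta_*^X$, or by deriving the relation directly from the multiplicativity $\det_{\mu\to\mu}(F\circ G) = \big(\det_{\mu\to\mu}(F)\circ g\big)\cdot\det_{\mu\to\mu}(G)$ applied to $F\circ G = (\Delta_*^X)^{-1}\circ\Delta_*^X = \mathrm{Id}$ over $\mathcal{S}$, whose determinant is $1$.
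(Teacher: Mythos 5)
Your proof follows the paper's argument step for step: apply the form-determinant definition to $(\Delta_*^X)^{-1}$, rewrite via the inverse-determinant relation of Lemma \ref{lemma:Relation Between Form Determinant and Inverse Form Determinant}, and identify $\det_{\mu}(\Delta_*^X)$ with $\mathcal{J}_\mu^X(\Delta)$ via Lemma \ref{lemma:Relation Between Determinant and Jacobian}. Your additional care in justifying why the inverse-determinant relation (stated in the paper only for pushforwards of diffeomorphisms) applies to the augmented differential, which is not the differential of any single smooth map, addresses a point the paper's own proof passes over silently, and your proposed justification via multiplicativity of the form determinant applied to $(\Delta_*^X)^{-1}\circ\Delta_*^X=\mathrm{Id}$ is sound.
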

\begin{proof} 
    By the definition of the form determinant,
    \begin{equation}\label{eqn:proof 1}
        \mu_{\Delta^{-1}(x)} \Big(\!\!\left(\Delta_*^X\right)^{-1}\!v^1,\ldots,\left(\Delta_*^X\right)^{-1}\!v^n\Big) = \det_{\mu}\left((\Delta_*^X)^{-1}\right)\cdot\mu_x(v^1, \ldots, v^n).
    \end{equation}
    Utilizing Lemma \ref{lemma:Relation Between Form Determinant and Inverse Form Determinant} to rewrite \eqref{eqn:proof 1}, we obtain
    \begin{equation}
        \mu_{\Delta^{-1}(x)} \Big(\!\!\left(\Delta_*^X\right)^{-1}\!v^1,\ldots,\left(\Delta_*^X\right)^{-1}\!v^n\Big) = \frac{1}{\big(\!\det_{\mu}(\Delta_*^X) \circ \Delta^{-1}\big)(x)}\cdot\mu_x(v^1, \ldots, v^n),
    \end{equation}
    and Lemma \ref{lemma:Relation Between Determinant and Jacobian} gives us the desired result.
\end{proof}

We are now ready to state the main theorem.

\begin{theorem}[Hybrid infinitesimal generator for invertible $\Delta$]
    \label{thm:restricted chikos}
    Let $\mathcal{H}$ be a smooth hybrid dynamical system with an invertible reset map $\Delta$ and dynamics given by \eqref{eq:HDS}. Let $\mu \in \Omega^n(M)$ be a reference volume-form and suppose that $\mathcal{J}_\mu^X(\Delta) \ne 0$. Then, the hybrid transfer operator $u(t,x) \coloneqq P_t^\mathcal{H}f(x)$ satisfies the following
    \begin{equation}\label{eq:chikos}
        \begin{cases}
            \ds \frac{\partial u}{\partial t} + du(X) = -u \cdot \mathrm{div}_\mu(X) & \t{for }x \notin \Delta(\mathcal{S}); \\[1ex]
            \ds u(t^+,x) = \frac{u(t^-, \Delta^{-1}(x))}{\big(\mathcal{J}_\mu^X(\Delta) \circ \Delta^{-1}\big)(x)}  & \t{for } x \in \Delta(\mathcal{S}).
        \end{cases}
    \end{equation}
\end{theorem}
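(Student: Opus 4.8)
The plan is to verify the two regimes of \eqref{eq:chikos} separately, in each case specializing the defining transfer identity \eqref{eq:hybrid_FP_def} to a well-chosen small region $E$. The continuous equation will be inherited from the non-hybrid theory, while the jump relation is where the genuinely hybrid analysis, and in particular the augmented differential, enters.

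For $x \notin \Delta(\mathcal{S})$, I would fix such an $x$ with $\varphi_{-t}^\mathcal{H}(x) \notin \mathcal{S}$ and take $E$ to be a small ball about $x$. By the quasi-smooth dependence property (Definition \ref{def:Quasi-smooth Dependence Property}) and the argument already sketched in Remark \ref{remark:dimensionality}, for $\Delta t$ small enough the backward orbits $\varphi_s^\mathcal{H}(E)$ with $-t-\Delta t \le s \le -t$ avoid the guard entirely, so on this region the hybrid flow coincides with the continuous flow of $X$. Consequently \eqref{eq:hybrid_FP_def} reduces exactly to the defining relation of the ordinary continuous Frobenius-Perron operator, and Theorem \ref{thm:inf_gen} applies verbatim to yield $\frac{\partial u}{\partial t} + du(X) = -u\cdot\mathrm{div}_\mu(X)$.

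For $x \in \Delta(\mathcal{S})$, which is the heart of the proof, I would again take $E$ to be a small neighborhood of $x$ and track $\varphi_{-t}^\mathcal{H}(E)$ as it is pulled back through the reset: running backward in time, the region reaches $\mathcal{S}$ and is carried to a neighborhood of $\Delta^{-1}(x)$. The difficulty is that $\Delta$ maps between the $(n-1)$-dimensional manifolds $\mathcal{S}$ and $\Delta(\mathcal{S})$, whereas \eqref{eq:hybrid_FP_def} integrates the $n$-form $\mu$, so a naive change of variables using $\Delta_*$ is dimensionally inconsistent. This is exactly what the augmented differential $\Delta_*^X$ of Definition \ref{def:Extended differential} repairs, by restoring the missing transverse direction along the flow. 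I would then perform the change of variables $y = \Delta^{-1}(x')$ in $\int_{\varphi_{-t}^\mathcal{H}(E)} f\,d\mu$, with the linearized change of measure governed by $(\Delta_*^X)^{-1}$. Lemma \ref{lemma:Hybrid Frobenius Perron Operator} supplies the pointwise factor, and since $\det_\mu \Delta_*^X = \mathcal{J}_\mu^X(\Delta)$ by Lemma \ref{lemma:Relation Between Determinant and Jacobian}, comparing integrands in \eqref{eq:hybrid_FP_def} gives $u(t^+,x) = u(t^-,\Delta^{-1}(x))/\big(\mathcal{J}_\mu^X(\Delta)\circ\Delta^{-1}\big)(x)$, which is the claimed jump.

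The main obstacle I anticipate is making this change of variables across the reset fully rigorous, specifically justifying that the $n$-dimensional region $E$ (including its transverse, flow direction) is transported by precisely $\Delta_*^X$ rather than by $\Delta_*$ alone, and that the coefficient along the flow is preserved. A clean route is to foliate a neighborhood of $\Delta(\mathcal{S})$ by integral curves of $X$, using transversality of $X$ to $\mathcal{S}$ as in Lemma \ref{lemma:Transverse Property}, parametrize $E$ by a base point on $\Delta(\mathcal{S})$ together with a flow-time coordinate, and observe that the backward hybrid flow acts by $\Delta^{-1}$ on the base coordinate while leaving the flow-time coordinate fixed. The Jacobian of this product map is then exactly the hybrid Jacobian, and Lemmas \ref{lemma:Transverse Property} and \ref{lemma:Relation Between Determinant and Jacobian} carry out the essential bookkeeping that identifies it with $\mathcal{J}_\mu^X(\Delta)$.
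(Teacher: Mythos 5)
Your proposal is correct and follows essentially the same route as the paper: the continuous regime is handled by reducing to the classical Frobenius--Perron generator via quasi-smooth dependence, and the jump regime by letting the augmented differential $\Delta_*^X$ govern the transport of $\mu$ across the reset, with Lemmas \ref{lemma:Relation Between Determinant and Jacobian} and \ref{lemma:Hybrid Frobenius Perron Operator} converting $\det_\mu(\Delta_*^X)^{-1}$ into the factor $1/\big(\mathcal{J}_\mu^X(\Delta)\circ\Delta^{-1}\big)$. The only cosmetic difference is that the paper justifies the linearization across the reset by computing $\lim_{t\nsea 0}(\varphi_{-t}^\mathcal{H})_*v$ on the splitting $T_x\Delta(\mathcal{S})\oplus\mathrm{Span}(X_x)$ and then cancels $\mu_x(v^1,\dots,v^n)$, whereas you phrase the same decomposition as a flow-box parametrization.
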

\begin{proof}
    We will first derive the continuous case of \eqref{eq:chikos}; let $x \notin \Delta(\mathcal{S})$. To do so, perform a change of coordinates in Equation \eqref{eq:hybrid_FP_def} to obtain
    \begin{align}\label{eq:change_of_coords}
        \int_E P_t^\mathcal{H} f(x) \, d\mu 
        = \int_{\varphi_{-t}^\mathcal{H}(E)}f(x) \, d\mu 
        = \int_E \big(f \circ \varphi_{-t}^\mathcal{H}\big)(x) \left(\varphi_{-t}^\mathcal{H}\right)^* \!\mu,
    \end{align}
    which holds for any measurable $E \subseteq M$. Pointwise, we have 
    \begin{equation}\label{eq:proof_chikos_1}
        P_t^\mathcal{H} f(x) \mu = \big(f \circ \varphi_{-t}^\mathcal{H}\big)(x) \left(\varphi_{-t}^\mathcal{H}\right)^* \!\mu.
    \end{equation}
    Since $x \notin \Delta(\mathcal{S})$, the quasi-smooth dependence propriety guarantees the existence of a small interval $(-\varepsilon, \varepsilon)$ such that $\varphi_{-\tau}^\mathcal{H}(x) \notin \Delta(\mathcal{S})$ for all $\tau \in (-\varepsilon, \varepsilon)$, which, in turn, implies the map $\tau \mapsto \varphi_{-\tau}^\mathcal{H}$ is smooth in a neighborhood of $x$. We can then differentiate \eqref{eq:proof_chikos_1} with respect to $t$ to obtain 
    \begin{equation*}
        \frac{\partial u}{\partial t} \mu = -du(X) \mu - u \cdot \mathrm{div}_\mu(X) \mu.
    \end{equation*}
    Finally, dividing out by $\mu \neq 0$ yields the $x\not\in \Delta(\mathcal{S})$ case of \eqref{eq:chikos}. 

    Now, suppose that $x \in \Delta(\mathcal{S})$ at $t = 0$, i.e., the starting point at $x$ is right after an impact has occurred. Yet again using the quasi-smooth dependence property, let $\varepsilon \in \R^+$ be such that $\varphi_{-t}^\mathcal{H}(x) \notin \mathcal{S}$ for all $t \in (0,\varepsilon)$ so that $\varphi_{-t}^\mathcal{H}(x) = \varphi_{-t}(x)$ and $\lim_{t \nsea 0} \varphi_{-t}^\mathcal{H}(x) = \Delta^{-1}(x)$. The goal will be to use equation \eqref{eq:proof_chikos_1} and take the limit as $t\nsea 0$.
    
    First, we analyze the behavior of the pushforward of vectors tangent to $\mathcal{S}$ by the flow $\varphi_{-t}^\mathcal{H}$, and we claim that
    \begin{equation}\label{eqn:claim}
        \lim_{t \nsea 0} (\varphi_{-t}^\mathcal{H})_*v = (\Delta_*^X)^{-1}v, \qquad \t{for all } v \in T_xM.
    \end{equation}
    To show this, we consider two cases based on the decomposition $T_xM = T_x\Delta(\mathcal{S}) \oplus \t{Span}(X_x)$. First, let $v \in T_x\Delta(\mathcal{S})$. We use an argument similar to that of Theorem 4.2 in \cite{hybrid_forms}, which is depicted in Figure \ref{fig:proof chikos discrete vectors}.
    \begin{align*}\label{eqn:other split}
        \begin{split}
            \lim_{t \nsea 0}(\varphi_{-t}^\mathcal{H})_*v
            = & \, \Delta_*^{-1} v
            \hspace{3cm} \t{Since $-t < 0$ for all $t \in (0, \varepsilon)$} \\
            = & \, \big(\Delta_*^X\big)^{-1} (v)
            \hspace{2.08cm} \t{By the definition of $\big(\Delta_*^X\big)^{-1}$}
        \end{split}
    \end{align*}
    \begin{figure}
        \centering
        \begin{subfigure}[H]{0.47\textwidth}
            \centering
            \begingroup
                \tikzset{every picture/.style={scale=0.75}}%
                \begin{tikzpicture}[x=0.75pt,y=0.75pt,yscale=-1,xscale=1]
    \useasboundingbox (152,10) rectangle (520,210);
    \draw[thick](248,13) .. controls (209,157.1) and (503,9.1) .. (470,152.1);
    
    \draw[thick](152,99) .. controls (173,29.1) and (372,308.1) .. (425,154.1);
    \draw[thick, MariaGreen!75!black, line width=1.5] (244.65,41.5) .. controls (246,71.1) and (278,84.1) .. (327,83.2);
    \draw[-stealth, thick, MariaBlue] (270,74.8) -- (207.76,44.05);
    
    \draw[thick, color={rgb, 255:red, 245; green, 166; blue, 35}, line width=1.5] (288,166.1) .. controls (319,184.7) and (363,216.5) .. (398,193.5);
    \draw [-stealth, thick, color={rgb, 255:red, 208; green, 2; blue, 27}] (337,193.7) -- (403,225);

    \draw[->, thick] (355,94) .. controls (355,115) and (325,118) .. (309,160);

    \node at (270,74.8) [circle,fill,inner sep=1.25pt]{};
    \node at (337,193.7) [circle,fill,inner sep=1.25pt]{};
    
    \draw (166, 68.4) node [anchor=north west, inner sep=0.75pt] {$\mathcal{S}$};
    \draw (474,143.4) node [anchor=north west, inner sep=0.75pt] {$\Delta(\mathcal{S})$};
    \draw (271, 72) node [anchor=south west, inner sep=0.75pt] {$x$};
    \draw (335,175.0) node [anchor=north west, inner sep=0.75pt] {$y$};
    \draw (300,107) node [anchor=north west, inner sep=0.75pt] {$\Delta^{-1}$};
    \draw (248, 32) node [anchor=north west, inner sep=0.75pt, MariaGreen!75!black]  {$\gamma(s)$};
    \draw (254,180) node [anchor=north, inner sep=0.75pt, color={rgb, 255:red, 245; green, 166; blue, 35}] {$\Delta^{-1}(\gamma(s))$};
    %
\end{tikzpicture}
            \endgroup
            \caption{Pushforward of the flow on vectors in the tangent space of the impact surface. For a curve $\gamma(s) \in \Delta(\mathcal{S})$ that passes through $x$, the curve $\Delta^{-1}(\gamma(s)) \in \mathcal{S}$ passes through $y = \Delta^{-1}(x)$. Since $\lim_{t \nsea 0}(\varphi_{-t}^\mathcal{H}) = \Delta^{-1}\gamma$, in the limit the pushforward $v$ under the flow is the tangent vector at $y$ of $\Delta^{-1}(\gamma)$ depicted in red.}
            \label{fig:proof chikos discrete vectors}
        \end{subfigure}
        \hfill
        \begin{subfigure}[H]{0.47\textwidth}
            \centering
            \begingroup
                \tikzset{every picture/.style={scale=0.73}}
                \begin{tikzpicture}[x=0.75pt,y=0.75pt,yscale=-1,xscale=1]
    
    \draw[thick] (203,44) .. controls (308,131.1) and (497,100.1) .. (470,152.1);
    \draw[thick] (152,99) .. controls (192,69) and (370,237.1) .. (425,154.1);
    
    \draw [thick, color={rgb, 255:red, 245; green, 166; blue, 35}] (167,228.1) .. controls (207,198.1) and (171,140.1) .. (210,110);
    \draw [thick, MariaGreen!75!black] (335.5,101.5) .. controls (376,71) and (485,125.1) .. (512,55.1);

    \draw [thick, -Stealth, color={rgb, 255:red, 208; green, 2; blue, 27 }] (210,110) -- (239.7,82);
    \draw [thick, -Stealth, MariaBlue] (335.5,101.5) -- (387.39,63.28);
    
    \draw (399,180) node [anchor=north west, inner sep=0.75pt] {$\mathcal{S}$};
    \draw (474,143) node [anchor=north west, inner sep=0.75pt] {$\Delta(\mathcal{S})$};
    \draw (340,55) node [anchor=north west, inner sep=0.75pt, MariaBlue] {$X_x$};
    \draw (195, 70) node [anchor=north west, inner sep=0.75pt, color={rgb, 255:red, 208; green, 2; blue, 27}] {$X_y$};
    \draw (188,193) node [anchor=north west, inner sep=0.75pt, color={rgb, 255:red, 245; green, 166; blue, 35}]  {$\varphi_{-t + s}^{\mathcal{H}}(x)$};
    \draw (416, 64) node [anchor=north west, inner sep=0.75pt, MariaGreen!75!black] {$\varphi_{s}^{\mathcal{H}}(x)$};
    
    \draw (335,106) node [anchor=north west, inner sep=0.75pt] {$x$};
    \draw (207,116) node [anchor=north west, inner sep=0.75pt] {$y$};
    
    \node at (210,110) [circle,fill,inner sep=1.25pt]{};
    \node at (335.5,101.5) [circle,fill,inner sep=1.25pt]{};
\end{tikzpicture}
            \endgroup
            \caption{The effect of the pushforward of the flow on vectors which are tangent to the flow. The representative curve of $X_x$ is the flow itself $\varphi_s^\mathcal{H}$ depicted in green. By the semigroup propriety, when we compose it with $\varphi_{-t}^\mathcal{H}$ we get the same flow, but with a time shift of $-t$ depicted in orange. Differentiating and taking the limit as $t\nsea 0$ gives $X_{\Delta^{-1}(x)}$. }
            \label{fig:proof_chikos_flow}
        \end{subfigure}
        
        \caption{Schematic drawing of the two cases in the proof of Theorem \ref{thm:restricted chikos}.}
        \label{fig:three graphs}
    \end{figure}
    \!Now, let $v = c X_x\in \text{Span}(X_x)$ as illustrated in Figure \ref{fig:proof_chikos_flow}. By the semigroup propriety of $\varphi_{-t}^\mathcal{H}$ and the definition of $\big(\Delta_*^X\big)^{-1}$, we have that 
    \begin{equation*}
        \lim_{t \nsea 0}(\varphi_{-t}^\mathcal{H})_*cX_x 
        = c X_{\Delta^{-1}(x)} 
        = \big(\Delta_*^X\big)^{-1}(cX_x)
    \end{equation*}
    Next, we take the limit of Equation \eqref{eq:proof_chikos_1} as $t \nsea 0$ and get
    \begin{align}\label{eq:split}
            u(0^+,x)\mu
            =  \lim_{t \nsea 0} u(t,x)\mu 
            = \lim_{t \nsea 0} P_t^\mathcal{H} f(x)\mu
            =  \lim_{t \nsea 0}\big(f \circ \varphi_{-t}^\mathcal{H}\big)(x) \left(\varphi_{-t}^\mathcal{H}\right)^* \!\mu.
    \end{align}     \vspace*{-5mm}

    As in the continuous case, we want to cancel $\mu_x$ from both sides. To do so, Let $v^1, \ldots, v^n \in T_xM$ be arbitrary. Plugging $v^1, \ldots, v^n$ into  Equation \eqref{eq:split}, we see that      \vspace*{-0mm}
\begin{align}
        & u(0^+,x)\mu_x(v^1, \ldots, v^n) \nonumber \\
        & \qquad \quad = \lim_{t \nsea 0}\big(f \circ \varphi_{-t}^\mathcal{H}(x)\big) \!\cdot\!\big(\varphi_{-t}^\mathcal{H}\big)^* \mu_x(v^1, \ldots, v^n) 
        && \t{By Equation \eqref{eq:split}} \nonumber \\
        & \qquad \quad = \lim_{t \nsea 0}\big(f \circ \varphi_{-t}^\mathcal{H}(x)\big)\!\cdot\!\mu_{\varphi_{-t}^\mathcal{H}(x)} \big((\varphi_{-t}^\mathcal{H})_*v^1, \cdots, (\varphi_{-t}^\mathcal{H})_*v^n\big) 
        && \!\!\!\begin{tabular}{l}
            By the definition of \\
            the pushforward
        \end{tabular} \nonumber \\
        & \qquad \quad = f(\Delta^{-1}(x)) \!\cdot\! \mu_{\Delta^{-1}(x)} \big(\big(\Delta_*^X\big)^{-1} (v^1), \dots, \big(\Delta_*^X\big)^{-1} (v^n)\big)
        && \t{By Equation \eqref{eqn:claim}} \nonumber \\
        & \qquad \quad = \frac{f(\Delta^{-1}(x))}{\big(\mathcal{J}_\mu^X(\Delta)\big)(\Delta^{-1}(x))} \mu_x(v^1, \ldots, v^n) 
        && \t{By Lemma 3.10} \nonumber \\
        & \qquad \quad = \frac{u(0^-,\Delta^{-1}(x))}{\big(\mathcal{J}_\mu^X(\Delta) \circ \Delta^{-1}\big)(x)}\mu_x(v^1, \ldots, v^n)
        && \t{By the definition of $u$}\label{eqn:follows}
    \end{align}   
    Since  $v^1, \ldots, v^n$ are arbitrary and $\mu_x \neq 0$, we have obtained the discrete case of \eqref{eq:chikos} for $t = 0$. 
   
    Now, consider $\varphi_{-t}^\mathcal{H}(x) \in \Delta(\mathcal{S})$ for $t \neq 0$ and without loss of generality, consider $t > 0$ since the $t < 0$ case follows from applying the same steps in the opposite direction. 
    \begin{align*}
        u(t^+, x)
        = & \, P^{\mathcal{H}}_t u(0^+, x)
        && \!\!\! \begin{tabular}{l}
            By definition of $u$  \\
            and since $P_{t + 0}^\mathcal{H} = P_t^\mathcal{H}P_0^\mathcal{H}$
        \end{tabular}\\
        = & \, P^{\mathcal{H}}_t \frac{u(0^-,\Delta^{-1}(x))}{\big(\mathcal{J}_\mu^X(\Delta) \circ \Delta^{-1}\big)(x)}
        && \t{By Equation \eqref{eqn:follows}} \\
        = & \, \frac{1}{\big(\mathcal{J}_\mu^X(\Delta) \circ \Delta^{-1}\big)(x)} P^{\mathcal{H}}_t u(0^-,\Delta^{-1}(x))
        && \!\!\! \begin{tabular}{l}
            Since $\big(\mathcal{J}_\mu^X(\Delta) \circ \Delta^{-1}\big)(x) \in \R$  \\
            and $P^{\mathcal{H}}_t$ is $\R$-linear
        \end{tabular} \\
        = & \, \frac{1}{\big(\mathcal{J}_\mu^X(\Delta) \circ \Delta^{-1}\big)(x)} u(t^-,\Delta^{-1}(x))
        && \t{Since $t > 0$}
    \end{align*}
    which concludes the proof. 
\end{proof}

In general, the reset map need not be invertible. Assuming that $\Delta^{-1}$ has finitely many preimages, Theorem \ref{thm:restricted chikos} still holds. The precise statement is as follows.

\begin{theorem}[General hybrid infinitesimal generator]
    \label{thm:general chikos}
    Let $\mathcal{H}$ be a smooth hybrid dynamical system with dynamics given by \eqref{eq:HDS} and suppose $\Delta^{-1}(\set{x})$ is finite. Additionally, let $\mu \in \Omega^n(M)$ be a reference volume-form and suppose that $\mathcal{J}_\mu^X(\Delta) \ne 0$. Then, the hybrid transfer operator $u(t,x) \coloneqq P_t^\mathcal{H}f(x)$ satisfies the following
    \begin{equation}\label{eqn:general chikos}
        \begin{cases}
            \displaystyle \frac{\partial u}{\partial t} + du(X) = -u\cdot\mathrm{div}_\mu(X) & \t{for } x \notin \Delta(\mathcal{S}); \\[2ex]
            \displaystyle u(t^+,x) = \!\! \sum_{y \in \Delta^{-1}(\set{x})} \frac{1}{\big(\mathcal{J}_\mu^X(\Delta)\big)(y)} u(t^-,y) & \t{for } x \in \Delta(\mathcal{S}).
        \end{cases}
    \end{equation}
\end{theorem}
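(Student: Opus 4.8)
The plan is to follow the architecture of the proof of Theorem~\ref{thm:restricted chikos} essentially verbatim for the continuous part, and to upgrade only the discrete part by decomposing the backward flow into finitely many locally invertible branches. For the continuous case $x \notin \Delta(\mathcal{S})$, nothing changes: that argument never used invertibility of $\Delta$, only the quasi-smooth dependence property near a point away from the image of the reset. Hence the same change of coordinates in \eqref{eq:hybrid_FP_def}, followed by differentiation in $t$ and cancellation of $\mu \neq 0$, yields the transport PDE $\partial u/\partial t + du(X) = -u\cdot\mathrm{div}_\mu(X)$ with no modification.

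For the discrete case, fix $x \in \Delta(\mathcal{S})$ and write $\Delta^{-1}(\{x\}) = \{y_1, \ldots, y_k\}$, which is finite by hypothesis. Since $\mathcal{J}_\mu^X(\Delta) \neq 0$, Lemma~\ref{lemma:Relation Between Determinant and Jacobian} shows the augmented differential $\Delta_*^X$ is nonsingular at each $y_i$, so $\Delta$ is an immersion near each $y_i$ and restricts to a diffeomorphism from some neighborhood $U_i \cap \mathcal{S}$ onto its image. Shrinking, I would take the $U_i$ pairwise disjoint and arrange that each $\Delta(U_i \cap \mathcal{S})$ contains $x$. This furnishes $k$ local inverse branches $\Delta_i^{-1}$, each defined on a neighborhood of $x$ in $\Delta(\mathcal{S})$ with $\Delta_i^{-1}(x) = y_i$, reducing the problem to $k$ copies of the invertible situation already handled.

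The key step is then to run the measure-theoretic definition \eqref{eq:hybrid_FP_def} on a small set $E \ni x$ and observe that, as $t \searrow 0$, the backward-flowed set $\varphi_{-t}^\mathcal{H}(E)$ splits into $k$ components, one accumulating on each $y_i$. On each component I would repeat the limiting computation of Theorem~\ref{thm:restricted chikos} with $\Delta^{-1}$ replaced by the branch $\Delta_i^{-1}$: the claim $\lim_{t \searrow 0}(\varphi_{-t}^\mathcal{H})_* v = (\Delta_*^X)^{-1}v$ holds branchwise using the splitting $T_x M = T_x\Delta(\mathcal{S}) \oplus \mathrm{Span}(X_x)$ exactly as in Figures~\ref{fig:proof chikos discrete vectors} and \ref{fig:proof_chikos_flow}, and Lemma~\ref{lemma:Hybrid Frobenius Perron Operator} converts the pushforward of $\mu$ into the factor $1/\mathcal{J}_\mu^X(\Delta)(y_i)$. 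Summing the $k$ branch contributions, testing against arbitrary $v^1, \ldots, v^n$, and cancelling $\mu_x \neq 0$ gives the stated jump relation $u(t^+,x) = \sum_i u(t^-, y_i)/\mathcal{J}_\mu^X(\Delta)(y_i)$, with propagation to $t \neq 0$ following from the semigroup property and $\R$-linearity of $P_t^\mathcal{H}$ as before.

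I expect the main obstacle to be the rigorous justification of the branch decomposition of $\varphi_{-t}^\mathcal{H}(E)$: one must argue that for $E$ and $t$ sufficiently small, the backward flow meets the guard in exactly the $k$ sheets near $y_1, \ldots, y_k$ and nowhere else, so that $\int_{\varphi_{-t}^\mathcal{H}(E)} f\,d\mu$ genuinely equals the finite sum $\sum_i \int_{V_i} f\,d\mu$ over the branch pieces. Everything downstream is a branchwise copy of the invertible proof, but this separation is the only place where non-invertibility genuinely bites; it is precisely here that the finiteness of $\Delta^{-1}(\{x\})$ (ruling out accumulation of preimages) together with the nonvanishing of the hybrid Jacobian (supplying the local diffeomorphism on each sheet) must be invoked.
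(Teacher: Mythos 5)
Your proposal follows essentially the same route as the paper's proof: decompose the backward-flowed neighborhood of $x$ into finitely many pairwise disjoint pieces $E_i$ accumulating on the preimages $y_i$, restrict the flow to each piece so that the invertible-case argument of Theorem~\ref{thm:restricted chikos} applies branchwise, and sum the resulting contributions $u(t^-,y_i)/\mathcal{J}_\mu^X(\Delta)(y_i)$. The only difference is one of emphasis --- you explicitly justify local invertibility of $\Delta$ near each $y_i$ via the nonvanishing Jacobian and flag the sheet-separation step as the delicate point, both of which the paper asserts by construction rather than argues.
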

\begin{proof}As the continuous case of Theorem \ref{thm:general chikos} is identical to that of \ref{thm:restricted chikos}, it remains to show
    \begin{minipage}{.42\linewidth}
        the $x \in \Delta(\mathcal{S})$ case of \eqref{eqn:general chikos}. Doing so, let $\Delta^{-1}(\set{x}) = \set{y_1, \ldots, y_n}$ and define $E \subset M$ to be a half-neighborhood about $x \in \Delta(S)$ such that 
        \begin{align*}
            \lim_{\tau \nsea t}\varphi_{-\tau}^\mathcal{H}(E) = \bigcup_{i = 1}^n E_i,
        \end{align*}
        where each $E_i$ is a half-neighborhood about $y_i$ and $\set{E_i}$ is pairwise disjoint as in Figure \ref{fig:multiple_preimages}. Let $\varphi_{t}^i = \varphi_{t}^{\mathcal{H}}|_{E_i}$ denote the hybrid flow restricted to each ball $E_i$ so that each $\varphi_{t}^i$ is invertible. Now, we will apply the proof of \ref{thm:restricted chikos} to each $\varphi_{t}^i$ and then combine the results to account for the \hfill multiple \hfill preimages. \hfill Generalizing
    \end{minipage}\hfill
    \begin{minipage}{.56\linewidth}
        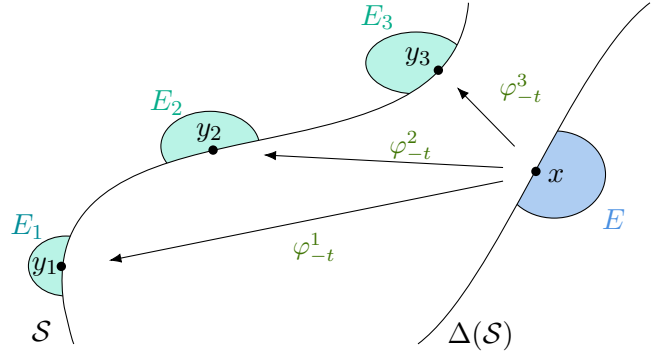
\begin{figure}[H]
            \centering
            \begin{tikzpicture}[x=0.75pt,y=0.75pt,yscale=-.975,xscale=.975]
    
    \draw [fill={rgb, 255:red, 74; green, 144; blue, 226}, fill opacity=0.45] (304,117.6) .. controls (304,105.17) and (315.86,95.1) .. (330.5,95.1) .. controls (345.14,95.1) and (357,105.17) .. (357,117.6) .. controls (357,130.03) and (345.14,140.1) .. (330.5,140.1) .. controls (315.86,140.1) and (304,130.03) .. (304,117.6) -- cycle;
    \draw [fill={rgb, 255:red, 80; green, 227; blue, 194}, fill opacity=0.4] (233,60.05) .. controls (233,51.24) and (244.64,44.1) .. (259,44.1) .. controls (273.36,44.1) and (285,51.24) .. (285,60.05) .. controls (285,68.86) and (273.36,76) .. (259,76) .. controls (244.64,76) and (233,68.86) .. (233,60.05) -- cycle;
    \draw [fill={rgb, 255:red, 80; green, 227; blue, 194}, fill opacity=0.4] (58.48,164.8) .. controls (58.48,156.27) and (67.21,149.35) .. (77.98,149.35) .. controls (88.75,149.35) and (97.48,156.27) .. (97.48,164.8) .. controls (97.48,173.33) and (88.75,180.25) .. (77.98,180.25) .. controls (67.21,180.25) and (58.48,173.33) .. (58.48,164.8) -- cycle;
    \draw [fill={rgb, 255:red, 80; green, 227; blue, 194}, fill opacity=0.4] (127,103.05) .. controls (127,93.08) and (138.42,85) .. (152.5,85) .. controls (166.58,85) and (178,93.08) .. (178,103.05) .. controls (178,113.02) and (166.58,121.1) .. (152.5,121.1) .. controls (138.42,121.1) and (127,113.02) .. (127,103.05) -- cycle;
    
    \fill[white](82,205) .. controls (81,204) and (77.25,187) .. (77,186) .. controls (75.77,178.23) and (75.56,171.17) .. (76.25,164.8) .. controls (85.67,78.1) and (261.64,121.22) .. (284,41) .. controls (284.5,40) and (286.5,29.5) .. (286,29) -- (325,29) -- (260,90) -- (180,120) -- (110,150) -- cycle;
    \draw (82,205) .. controls (81,204) and (77.25,187) .. (77,186) .. controls (75.77,178.23) and (75.56,171.17) .. (76.25,164.8) .. controls (85.67,78.1) and (261.64,121.22) .. (284,41) .. controls (284.5,40) and (286.5,29.5) .. (286,29);

    \fill[white] (260,205) .. controls (300,175.1) and (340,59.1) .. (380,29) -- (360,29) -- (235,205) -- cycle;
    \draw (260,205) .. controls (300,175.1) and (340,59.1) .. (380,29);

    \draw[-Latex] (310,103.1) -- (280,72.5);
    \draw[-Latex] (304,114) -- (180,107.5);
    \draw[-Latex] (304,121) -- (100,162);
    
    \node at (321,116) [circle,fill,inner sep=1.25pt]{};
    \node at (75.5,165) [circle,fill,inner sep=1.25pt]{};
    \node at (154,105) [circle,fill,inner sep=1.25pt]{};
    \node at (270.53,63.8) [circle,fill,inner sep=1.25pt]{};
    
    \draw (55,188)  node [anchor=north west] {$\mathcal{S}$};
    \draw (270,188) node [anchor=north west] {$\Delta(\mathcal{S})$};
    
    \draw (350,130) node [anchor=north west, color={rgb, 255:red, 74; green, 144; blue, 226}] {$E$};
    \draw (44,133)  node [anchor=north west, color={rgb, 255:red,  2; green, 141; blue, 153}] {$E_1$};
    \draw (116,69)  node [anchor=north west, color={rgb, 255:red,  2; green, 141; blue, 153}] {$\textcolor[rgb]{0.02,0.68,0.54}{E_2}$};
    \draw (225,25)  node [anchor=north west, color={rgb, 255:red,  2; green, 141; blue, 153}] {$\textcolor[rgb]{0.02,0.68,0.54}{E_3}$};

    \draw (322,110) node [anchor=north west] {$x$};
    \draw (55,155)  node [anchor=north west] {$y_1$};
    \draw (140,86)  node [anchor=north west] {$y_2$};
    \draw (248,47)  node [anchor=north west] {$y_3$};
    
    \draw (190,142.5) node [anchor=north west, font=\small] {$\textcolor[rgb]{0.25,0.46,0.02}{\varphi_{-t}^1}$};
    \draw (240,90)    node [anchor=north west, font=\small] {$\textcolor[rgb]{0.25,0.46,0.02}{\varphi_{-t}^2}$};
    \draw (297,60.4)  node [anchor=north west, font=\small] {$\textcolor[rgb]{0.25,0.46,0.02}{\varphi_{-t}^3}$};
\end{tikzpicture} 
            \caption{Diagrammatic construction of $E_i$ and its corresponding $\varphi^i_{-t}$ when $n = 3$.}
            \label{fig:multiple_preimages}
        \end{figure}
    \end{minipage}\\
    the change of coordinates in \eqref{eq:change_of_coords}, becomes a sum over the $\varphi_{-t}^i$'s as follows
    \begin{align*}
        \int_E P_t^\mathcal{H} f(x) \, d\mu 
        = \sum_{i = 1}^n\int_{\varphi_{-t}^i(E)}f(x) \, d\mu 
        = \sum_{i = 1}^n \int_{E} \big(f \circ \varphi_{-t}^i\big)(x) \left(\varphi_{-t}^i\right)^* \!\mu,
    \end{align*}
    which by similar reasoning, implies that
    \begin{equation}\label{eqn:multiple_preimages1}
        P_t^\mathcal{H}f(x) \mu = \sum_{i = 1}^n (f\circ\varphi_{-t}^i)(x) (\varphi_{-t}^i)^*\mu.
    \end{equation}
    Following the same steps that imply Equation \eqref{eqn:follows} gives us
    \begin{align}\label{eqn:final split}
        \begin{split}
            \lim_{t \nsea 0} \big(f \circ \varphi_{-t}^i\big)(x) \big(\varphi_{-t}^i\big)^* \mu_{x}(v^1,\ldots, v^n)
            = & \, \frac{f(y_i)}{\big(\mathcal{J}^X_\mu(\Delta)\big)(y_i)} \mu_x(v^1,\ldots, v^n).
        \end{split}
    \end{align} since $\Delta$ is invertible when restricted to $E_i$.
    Summing \eqref{eqn:final split} over each $y_i \in \Delta^{-1}(\set{x})$ and equating with the left-hand side of \eqref{eqn:multiple_preimages1} gives
    \begin{align*}
        u(t^+, x) 
        = \sum_{i = 1}^n \frac{1}{\big(\mathcal{J}_\mu^X(\Delta)\big)(y_i)}u(t^-, y_i) 
        = \sum_{y \in \Delta^{-1}(\set{x})} \frac{1}{\big(\mathcal{J}_\mu^X(\Delta)\big)(y)}u(t^-, y),
    \end{align*}
    which is precisely the discrete part of Equation \eqref{eqn:general chikos}.
\end{proof}
\begin{remark}[$\Delta^{-1}(\set{x})$ with infinite cardinality]
    Consider the smooth HDS 
    \begin{equation*}
        (M,\mathcal{S}, \Delta, X) = (\R, \Z \smm \set{0}, n \mapsto 0, \p/\p x),
    \end{equation*}
    which implies $\Delta(\mathcal{S}) = \set{0}$. This is an HDS that does not undergo beating or any sort of Zeno properties, but $\Delta^{-1}(\set{0}) = \Z \sm \set{0}$, which is infinite. This implies that we necessarily cannot assume that the preimage of a point in $\Delta(\mathcal{S})$ has finite cardinality. Suppose $\mu = dx$. Then,
    \begin{gather*}
        \Delta^*i_X\mu = \Delta^* dx(X) = \Delta^* 1 = 1 \circ \Delta = 1 \\
        \iota^*_\mathcal{S} i_X\mu = \iota^*_\mathcal{S} dx(X) = 1 \circ \iota_\mathcal{S} = 1
    \end{gather*}
    which implies that $\mathcal{J}_\mu^X(\Delta) = 1$. Then, 
    \begin{equation}
        u(0^+,0) 
        = \sum_{y\in\Delta^{-1}(\set{0})} \frac{1}{\mathcal{J}_\mu^X(\Delta)(y)} u(0^-,y)
        = \sum_{y\in \Z \sm \set{0}} u(0^-,y)
    \end{equation}
    which can blowup for certain choices of $f$. 
\end{remark}


\subsection{Hybrid Reduction}\label{subsec:hLPO}
    The primary goal of this work is to understand and compute the Frobenius-Perron operator for mechanical impact systems which results in solving \eqref{eq:chikos}. Unfortunately, the dimension in many examples makes direct numerical study difficult. The goal of this section is to extend the idea of Lie-Poisson reduction (Theorems \ref{thm:cont reduction} and its nonholonomic counterpart) 
to impact systems. 
Other versions of reduction have been carried out for impact systems, e.g. \cite{ames2006,leo_reduction,EYREAIRAZU202194} and the references therein.
Classical Lie-Poisson reduction is capable of reducing the dimension from $2n$ down to $n$. When impacts are considered, it will be shown that the dimension can only be reduced to $n+1$.
    \subsubsection{Hybrid Lie-Poisson reduction}
        \label{sec:reduction}

Lie-Poisson reduction is possible when the Hamiltonian is left-invariant as the dynamics can be translated to the identity element. The natural extension to impact systems is for the impact surface, $\Sigma \subset G$, to be left-invariant as well. As will be shown below, this is true precisely when $\Sigma$ is the \textit{right} coset of a normal subgroup.

Before we proceed, we provide a summary of the notation used in this section. 
\begin{table}[!ht]
    \centering
    \begin{tabular}{c|c|c}
        Term & Notation & Element(s) \\\hline
        Lie Group & $G$ & $g, g_0$ \\
        Lie Subgroup & $K$ & $k,\tilde{k}$ \\
        Lie Algebra & $\mf{g}$ & $\xi$ \\
        Lie Algebra of $K$ & $\mf{K}$ & $\delta k$, $\widetilde{\delta k}$ \\
        Dual Lie Algebra & $\ \mf{g}^*$ & $\zeta$ \\
        Impact Surface & $\Sigma = Kg_0$ & $kg_0$ \\
        Metric on $G$ & $\mTse$ & N/A
    \end{tabular}
    \caption{Notation for Section \ref{sec:reduction}}
    \label{tab:Table of Notation}
\end{table}

Throughout this section we consider mechanical impact systems as defined in \ref{def:Impact System}, which, by definition, means that the natural Hamiltonian can be written as 
\begin{equation*}
    H(q, p) = \frac{1}{2}\mTse^{-1} (p, p) + V(q).
\end{equation*}

\begin{remark}\label{remark:musical_eta_xi}
    We can go between $G$'s Lie algebra $\mathfrak{g}$ and its dual $\mathfrak{g}^*$ using the musical isomorphisms. In particular $\xi \coloneqq \zeta^\sharp = (\mTse^{-1})^{ij}\zeta_j$.
\end{remark}

Assume now that we are in the setting of Lie-Poisson reduction, i.e., $H$ is left invariant and the state space is a Lie group $G$. Then the full equations of motion for the mechanical impact system $(G, H, \Sigma)$ are given by the following Proposition.
\begin{proposition}\label{prop:eom for Lie algebras}
    Let $H \colon T^*G \to \mathbb{R}$ be a natural, left-invariant Hamiltonian and $h \colon \mathfrak{g}^* \to \mathbb{R}$ be its restriction to the identity. For an impact surface, $\Sigma \subset G$, let $s \colon G \to \mathbb{R}$ be such that zero is a regular value and (locally) $\Sigma = s^{-1}(\{0\})$. Then, between impacts, the usual Lie-Poisson equations hold
    \begin{equation}\label{eq:LP cont}
        \begin{cases}
            \dot{\zeta} = \mathrm{ad}^*_{dh}\zeta; \\
            \dot{g} = \left(\ell_g\right)_*\zeta^\sharp,
        \end{cases} \quad \t{for }s(g)\ne 0,
    \end{equation}
    and upon impact, the left-translation of the corner conditions \eqref{Weierstrass–Erdmann corner conditions} becomes
    \begin{equation}\label{eq:LP impact}
        \begin{cases}
            \zeta\mapsto \zeta - 2\dfrac{\big(\ell_g\big)^*ds_g\big(\zeta^\sharp\big)}{\ell_g^* ds_g\big((\ell_{g^{-1}})_*ds_g^\sharp\big)}\big(\ell_g\big)^*ds_g; \\
            g\mapsto g,
        \end{cases} \quad \t{for } s(g) = 0,
    \end{equation}
    where $\sharp \colon \Omega^1(G) \to \mathfrak{X}(G)$ is the musical isomorphism, as defined in \ref{def:musical_isomorphisms}.
\end{proposition}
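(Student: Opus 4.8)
The plan is to treat the two regimes of \eqref{eq:HDS} separately: the continuous flow is handled by invoking the Lie--Poisson reduction already established, while the impact is handled by left-translating the explicit reflection reset down to the identity via the right momentum map $J_R$. Throughout I work in the left-trivialized coordinates $T^*G\cong G\times\mf{g}^*$, $(g,p)\mapsto(g,\zeta)$ with $\zeta=\ell_g^*p$, so that the proposition is simply the full system re-expressed in the pair $(g,\zeta)$.

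For the continuous part \eqref{eq:LP cont}, I would first note that left-invariance of a \emph{natural} Hamiltonian forces the metric $\mTse$ to be left-invariant and the potential $V$ to be constant (taking $q=e$ in $V(gq)=V(q)$), so that $H=h\circ J_R$ with $h(\zeta)=\tfrac12\mTse^{-1}(\zeta,\zeta)$ up to an additive constant. Theorem~\ref{thm:cont reduction} then yields $\dot{\zeta}=\ad^*_{dh}\zeta$ directly. For the reconstruction equation, I would use that for a natural Hamiltonian the fiber derivative is the musical isomorphism (Remark~\ref{remark:Fiber Derivative}), so $dh=\zeta^\sharp$; the body velocity satisfies $g^{-1}\dot g=\zeta^\sharp$, which is exactly $\dot g=(\ell_g)_*\zeta^\sharp$, the second line of \eqref{eq:LP cont}.

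For the impact part \eqref{eq:LP impact}, the key idea is that for a natural Hamiltonian the reset on $T^*G$ is the explicit reflection $p\mapsto p-2\tfrac{\mTse^{-1}(ds_g,p)}{\mTse^{-1}(ds_g,ds_g)}ds_g$ and that $g$ is unchanged, giving $g\mapsto g$. Since $\zeta=\ell_g^*p$ and $g$ is fixed during the reset, I would apply $\ell_g^*$ to this reflection to obtain the induced reset on $\zeta$: the first term becomes $\ell_g^*p=\zeta$ and the last factor becomes $(\ell_g)^*ds_g$, leaving only the two scalar coefficients to rewrite. Here I would invoke left-invariance of the co-metric in the form $\mTse^{-1}_g(\alpha,\beta)=\mTse^{-1}_e(\ell_g^*\alpha,\ell_g^*\beta)$ and the resulting equivariance $(\ell_g)_*\zeta^\sharp=p^\sharp$ of the sharp map. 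These give the numerator $\mTse^{-1}_g(ds_g,p)=\langle\ell_g^*ds_g,\zeta^\sharp\rangle=(\ell_g)^*ds_g(\zeta^\sharp)$, and, by naturality of the pullback, the denominator $\langle\ell_g^*ds_g,(\ell_{g^{-1}})_*ds_g^\sharp\rangle=\langle ds_g,ds_g^\sharp\rangle=\mTse^{-1}_g(ds_g,ds_g)$. Substituting reproduces \eqref{eq:LP impact} verbatim.

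The main obstacle I anticipate is bookkeeping: carefully tracking the base point at which each $\sharp$/$\flat$ is taken (the identity $e$ versus the impact point $g$) and verifying both the co-metric transformation law and the equivariance $(\ell_g)_*\zeta^\sharp=p^\sharp$, since a single misplaced $\ell_g$ versus $\ell_{g^{-1}}$, or $(\cdot)_*$ versus $(\cdot)^*$, would break the identification of the denominator. A secondary check worth including is that the energy condition $H^-=H^+$ in the corner conditions \eqref{Weierstrass–Erdmann corner conditions} holds automatically, since the reflection preserves $\mTse^{-1}(\cdot,\cdot)$ and the potential is unchanged at the fixed base point $g$; this confirms that the single reflection formula encodes both corner conditions at once.
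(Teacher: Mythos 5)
Your proposal is correct and follows essentially the same route as the paper: cite the continuous Lie--Poisson reduction for the flow between impacts, then left-translate the explicit reflection reset $p\mapsto p-2\tfrac{\mTse^{-1}(ds_g,p)}{\mTse^{-1}(ds_g,ds_g)}ds_g$ to the identity and identify the scalar coefficients. The only cosmetic difference is that you justify the coefficient identification via left-invariance of the co-metric, whereas the paper does the equivalent computation in local coordinates.
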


\begin{proof}
    For an arbitrary manifold $Q$ with natural Hamiltonian and non-moving impact surface, the variational impact equations are given by \cite{hybrid_forms}:
    \begin{equation}\label{general_impact_eqn}
        \begin{cases}  
            \ds p^+ = p^- - \frac{2p^-(\nabla s_q)}{ds_q(\nabla s_q)}ds_q;  \\
            \ds q^+ = q^-,
        \end{cases} \qquad \text{for } q \in \Sigma,
    \end{equation}
    where $\nabla s_q = (ds_q)^\sharp$. By \cite{marsden_ratiu}, the system will follow the continuous Lie-Poisson equations of motion as follows when away from impacts.
    \begin{align*}
        \begin{cases}
            \dot{\zeta} = \mathrm{ad}^*_{dh}\zeta; \\
            \dot{g} = (\ell_{g^{-1}})_* \zeta^\sharp.
        \end{cases}
    \end{align*}
    In order to get the impact equations on the dual of the Lie algebra we need to left translate \eqref{general_impact_eqn} to the origin. For $g \in G$,
    \begin{align*}
        p^+ 
        = & \ p^- - \frac{2p^-(\nabla s_g)}{ds_g(\nabla s_g)}ds_g 
        && \t{Starting with \eqref{general_impact_eqn}} \\
        \iff (\ell_g)^* p^+ 
        = & \ (\ell_g)^*p^- -(\ell_g)^* \frac{2p^-(\nabla s_g)}{ds_g(\nabla s_g)}ds_g 
        && \t{After left translating} \\ 
        \iff \zeta^+ 
        = & \ \zeta^- - \frac{2\zeta^-((\ell_{g^{-1}})_*\nabla s_g)}{\ell_g^* ds_g\big((\ell_{g^{-1}})_*\nabla s_g\big)}(\ell_g)^*d s_g 
        && \t{By the definition of $\zeta$}
    \end{align*}
    where $\zeta^+ = (\ell_g)^*p^+$ and $\zeta^- = (\ell_g)^*p^-$ are elements of $\mathfrak{g}^*$. The last step is to note that 
    \begin{align*}
        \zeta (\ell_{g^{-1}})_*\nabla s_g 
        = & \ g^{-1}\cdot \zeta_i (\mTse^{-1}){ij}(ds_g)_{j} 
        && \t{Expanding in local coordinate representaion} \\
        = & \ (ds_g)_{j}g^{-1}\cdot (\mTse^{-1})^{ij}\zeta_i 
        && \t{Since $(ds_g)_{j} \in \R$ for each $j$} \\
        = & \ ds_g (\ell_{g^{-1}})_* \zeta^\sharp 
        && \t{Contracting indices} \\
        = & \ \ell_{g^{-1}}^* ds_g (\zeta^\sharp)
        && \t{By the definition of the pullback} 
    \end{align*}
\end{proof}

The goal now is to provide an analog of continuous Lie-Poisson reduction \ref{thm:cont reduction} for the hybrid equations of motion presented above \ref{prop:eom for Lie algebras}. This is not straightforward due to the reasons presented in the following: 
 
\begin{remark}\label{ref:HLP_issues}
    The power of the Lie-Poisson equations \eqref{eq:Lie_poisson_cont} is that it decouples the momentum from the position dynamics. This fails to be the case for the impact case for two main reasons: 1) the impact map for $\zeta$ in \eqref{eq:LP impact} depends on $g$ as well as $\zeta$, and 2) switching between \eqref{eq:LP cont} and \eqref{eq:LP impact} depends on $g$ (i.e. on the position on the impact surface). As will be shown, by requiring the impact set to be left-invariant, the first issue can be resolved. The second issue requires more structure i.e. that of a right coset of a normal subgroup.
\end{remark}

The overarching idea is the following: we want to be able to detect when the impact occurs without having to keep track of the entire $n$ dimensional trajectory in $G$. We try to do so in the least number of dimensions possible. One crucial piece of information is \textit{whether} the impact is happening or not. Hence, the question to be asked is: what propriety does $\Sigma$ need to have so that \textit{where} the impact happens does not affect the reduced hybrid equations of motion? From the following definitions and lemmas it will become apparent that $\Sigma$ needs to be a right coset.

\begin{minipage}{0.45\textwidth}
    \begin{definition}[Impact $u$-stabilizer]\label{Gs}\\
    Given some $u \in \Sigma \subseteq G$, define the \\ impact $u-$ stabilizer to be the set
    \begin{equation*}
        G_u(\Sigma) = \{g \in G: gu \in \Sigma\}.
    \end{equation*}

\end{definition}
\end{minipage}
\begin{minipage}{0.45\textwidth}
    \begin{figure}[H]
    \centering
    \begin{tikzpicture}[x=1pt,y=0.75pt,xscale=1,yscale=-1, scale=0.8]

\draw[very thick,] (94.5, 74.75) .. controls (134.5, 44.75) and (258, 160.75) .. (298, 130.75);

\draw[-stealth, thick] (178, 93.75) .. controls (167.87, 70.59) and (151.68, 60) .. (138.43, 69.17);
\draw[-stealth, thick] (178, 93.75) .. controls (196.43, 77.74) and (227.56, 94.67) .. (233.52, 114.87);
\draw[-stealth, thick] (178, 93.75) .. controls (178.97, 115.58) and (231.69, 143.05) .. (252.19, 136.46);

\fill[MariaGreen2] (178, 93.5) circle [radius=2];

\fill[MariaGreen2!50!MariaBlue] (133, 73.5) circle [radius=2];
\fill[MariaGreen2!50!MariaBlue] (236, 122) circle [radius=2];
\fill[MariaGreen2!50!MariaBlue] (264, 133) circle [radius=2];

\draw (92.5, 57.5) node [anchor = center, inner sep=0.75pt] {\resizebox{!}{3mm}{$\Sigma$}};
\draw (172.5, 97) node [anchor = center, inner sep=0.75pt, MariaGreen2] {$u$};
\draw (152.5, 53) node [anchor = center, inner sep=0.75pt] {${\color{MariaBlue}g_1}{\color{MariaGreen2}u}$};
\draw (209, 74) node [anchor = center, inner sep=0.75pt] {${\color{MariaBlue}g_2} {\color{MariaGreen2}u}$};
\draw (195, 130) node [anchor = center, inner sep=0.75pt] {${\color{MariaBlue}g_3} {\color{MariaGreen2}u}$};
\end{tikzpicture}
    \caption{Depiction of $\Sigma$ elements remaining in $\Sigma$ after multiplication by $G_u(\Sigma)$ elements.}
    \label{fig:Gs}
\end{figure}
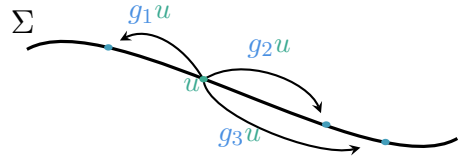
\end{minipage}

In other words $G_u(\Sigma)$, consists of all group elements that, when acting on $u$, keep $\Sigma$ invariant. 
A special property holds when $\Sigma$ is a right coset.
\begin{lemma}\label{prop:Gs_is_H}
    Let $K$ be a subgroup of $G$, let $\Sigma = Kg_0$ for some $g_0 \in G$, and define $G_u(\Sigma)$ as in Definition \ref{Gs}. Then, $G_u(\Sigma) = K$.
\end{lemma}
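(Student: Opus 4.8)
The plan is to establish the set equality $G_u(\Sigma) = K$ by a direct double inclusion, exploiting the right-coset structure $\Sigma = Kg_0$ together with cancellation in the group $G$. The essential preliminary step, which uses the standing hypothesis $u \in \Sigma$, is to fix a representative: since $u \in Kg_0$, there is some $k_0 \in K$ with $u = k_0 g_0$. Everything downstream is bookkeeping with this representative.

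First I would verify $K \subseteq G_u(\Sigma)$. Given $k \in K$, I compute $ku = k(k_0 g_0) = (kk_0)g_0$; because $K$ is a subgroup it is closed under products, so $kk_0 \in K$ and therefore $ku \in Kg_0 = \Sigma$. By Definition \ref{Gs} this means $k \in G_u(\Sigma)$. For the reverse inclusion $G_u(\Sigma) \subseteq K$, I take any $g \in G_u(\Sigma)$, so by definition $gu \in \Sigma$, i.e. $gu = \tilde{k} g_0$ for some $\tilde{k} \in K$. Substituting $u = k_0 g_0$ gives $g k_0 g_0 = \tilde{k} g_0$, and right-cancelling $g_0$ (available since $g_0$ is invertible in $G$) yields $g k_0 = \tilde{k}$, whence $g = \tilde{k} k_0^{-1}$. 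As $K$ is closed under both products and inverses, $g \in K$. Combining the two inclusions gives $G_u(\Sigma) = K$.

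I do not anticipate a genuine obstacle here: the argument is elementary group theory. The only points deserving care are that it genuinely relies on the \emph{group} (not merely monoid) structure — specifically right-cancellation by the invertible element $g_0$ and the closure of $K$ under inverses — and that, although the chosen representative $k_0$ depends on the point $u$, the resulting set $G_u(\Sigma) = K$ is manifestly independent of $u$. This $u$-independence is exactly the feature the surrounding reduction argument needs, so it is worth flagging explicitly even though it falls out of the computation for free.
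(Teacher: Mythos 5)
Your proof is correct and follows essentially the same route as the paper's: a double inclusion that fixes a representative $u = k_0 g_0$ with $k_0 \in K$ and then uses closure of $K$ under products and inverses (your right-cancellation of $g_0$ is just the paper's explicit computation $kg_0(\tilde{k}g_0)^{-1} = k\tilde{k}^{-1}$ written slightly differently). No gaps; the remark about $u$-independence is a nice bonus but not needed for the statement itself.
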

\begin{proof}
    We will show $G_u(\Sigma) \subseteq K$ and then $K \subseteq G_u(\Sigma)$. Let $g \in G_u(\Sigma)$ be arbitrary so $gu \in \Sigma$ by definition. Since $\Sigma = Kg_0$, there exists a $k \in K$ such that $gu = kg_0$, which implies $g = kg_0u^{\!-1}$. Since $u \in \Sigma = Kg_0$, we know that $u = \tilde{k}g_0$ for some $\tilde{k} \in K$. Substituting this in, $  g = kg_0u^{\!-1} = kg_0(\tilde{k}g_0)^{-1} = kg_0g_0^{-1}\tilde{k}^{-1} = k\tilde{k}^{-1}.$ Since $K \le G$, we know $g = k\tilde{k}^{-1} \in K$, so $G_u(\Sigma) \subseteq K$. Now, let $k \in K$ be arbitrary. Using similar arguments as in the first part, $ku = k\tilde{k}g_0$ for some $\tilde{k} \in K$, and since $K \le G$, we know that $k\tilde{k} \in K$, which implies $ku \in Kg_0 = \Sigma$. Thus, $G_u(\Sigma) = K$.
\end{proof} \vspace*{-2mm}

In order to reduce the dimension to $n + 1$, it should not matter whether the impact happens at $u$ or at another point in $G_u(\Sigma)$. The previous lemma implies that for right cosets $G_u(\Sigma) = K$ for any $u$. Assuming that points in $G_u(\Sigma)$ are equivalent for the reset map, this implies that for right cosets, the only thing we would need to keep track of is the direction normal to $K$. This is indeed one dimensional if $K$ is codimension 1. Now we need to see when the equivalence assumption actually holds. For this we define the tangent preserving propriety.\\
\begin{minipage}{.5\linewidth}
    \begin{center}
        \begin{minipage}{.76\linewidth}
            \begin{definition}[Tangent preserving]\label{Tangent Preserving}
                \!\!\!\!Let $\Sigma$ be a submanifold of $G$ (not necessarily a subgroup). Then, $\Sigma$ is said to be (left) tangent preserving if for $u \in \Sigma$ and $g \in G_u(\Sigma)$, then 
                \begin{align*}
                    (\ell_g)_* T_u\Sigma = T_{gu}\Sigma.
                \end{align*}
            \end{definition}
        \end{minipage}
    \end{center}
\end{minipage}\hfill
\begin{minipage}{.5\linewidth}
    \begin{figure}[H]
    \centering
    \begin{tikzpicture}[x=1pt,y=1pt, yscale=-1,xscale=1.5]
    \draw[very thick,red1] (25.22,136.78) .. controls (43.41,132.83) and (54.78,85.53) .. (70.78,109.03) .. controls (86.78,132.53) and (93.13,122.8) .. (105.28,124.53) .. controls (117.43,126.26) and (134.28,145.53) .. (146.78,131.53);
    \draw[very thick] (24,79.88) .. controls (39.25,83.88) and (69.5,48.89) .. (67,63.89) .. controls (64.5,78.89) and (93.75,94.75) .. (109.25,88.75) .. controls (124.75,82.75) and (134.5,109.75) .. (147,95.75);

    \draw[very thick, blue2, opacity = .5] (24.09,87.23) -- (82.79,46.03);
    \draw[very thick, blue2, opacity = .5] (74,87.75) -- (127,92.75);
    \draw[very thick, blue2, opacity = .5] (34.4425,127.11) -- (71.3175,88.11);
    \draw[very thick, blue2, opacity = .5] (59.235,128.79) -- (128.185,121.29);
    
    \draw[thick, -stealth] (53.44,66.63) -- (76.92,50.15);
    \draw[thick, -stealth] (100.13,90.16) -- (125.8,77.77);
    \draw[thick, -stealth] (52.88,107.61) -- (67.63,92.01);
    \draw[thick, -stealth] (93.71,125.04) -- (121.29,122.04);
    
    \draw (156,131.53) node {$\Sigma$};
    \draw (156,94.25)  node {$\Sigma'$};
    \draw (55,95)      node {$v$};
    \draw (58,50)      node {$u$};
    \draw (109.26,110) node {$(\ell_h)_* v$};
    \draw (111.93,69)  node {$(\ell_g)_* u$};
    
    \node at (53.440,66.63) [circle, fill, inner sep = 1pt]{};
    \node at (100.13,90.16) [circle, fill, inner sep = 1pt]{};
    \node at (52.88,107.61) [circle, fill, inner sep = 1pt]{};
    \node at (93.71,125.04) [circle, fill, inner sep = 1pt]{};
    
\end{tikzpicture}
    \caption{Geometric depiction of the tangent preserving propriety, where $\Sigma$ has the tangent preserving propriety and $\Sigma'$ does not, so $(\ell_g)_*v \in T\Sigma$ and $(\ell_g)_*v \notin T\Sigma'$.}
    \label{fig:tangent preserving comparison}
\end{figure}
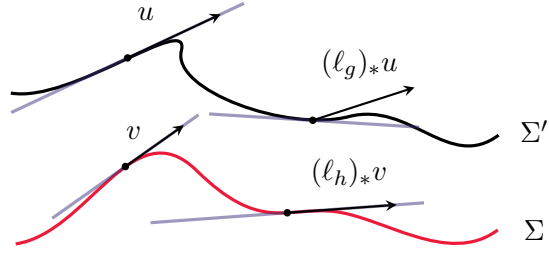
\end{minipage}

\begin{lemma}\label{lemma:cosetProp}
    If $\Sigma = Kg_0$ for a subgroup $K \leq G$, then $\Sigma$ is (left) tangent preserving.
\end{lemma}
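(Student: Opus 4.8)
The plan is to reduce the statement to the elementary observation that left multiplication by an element of $K$ preserves the right coset $\Sigma = Kg_0$ setwise, and then to pass from this set-level invariance to the desired tangent-space equality.

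First I would invoke Lemma~\ref{prop:Gs_is_H} to identify $G_u(\Sigma) = K$, so that the group element $g$ appearing in Definition~\ref{Tangent Preserving} may be taken to be an arbitrary element of $K$. The key algebraic fact is then that for $g \in K$ one has $g\Sigma = g(Kg_0) = (gK)g_0 = Kg_0 = \Sigma$, using $gK = K$ for a subgroup element $g$. In other words, $\ell_g$ maps $\Sigma$ into itself; applying the same reasoning to $g^{-1} \in K$ gives $\ell_{g^{-1}}(\Sigma) \subseteq \Sigma$, whence $\ell_g$ restricts to a diffeomorphism $\ell_g|_\Sigma \colon \Sigma \to \Sigma$ with inverse $\ell_{g^{-1}}|_\Sigma$.

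Next I would promote this to the tangent level. Because $\Sigma$ is an embedded submanifold and $\ell_g$ carries $\Sigma$ onto $\Sigma$, the differential $(\ell_g)_* \colon T_uG \to T_{gu}G$ restricts on the subspace $T_u\Sigma$ to the differential of $\ell_g|_\Sigma$; concretely, differentiating a curve $s \mapsto k(s)g_0 \in \Sigma$ with $k(0)g_0 = u$ and observing that $s \mapsto g\,k(s)\,g_0$ again lies in $\Sigma$ shows $(\ell_g)_* v \in T_{gu}\Sigma$ for every $v \in T_u\Sigma$. Since $\ell_g|_\Sigma$ is a diffeomorphism, its differential is a linear isomorphism $T_u\Sigma \to T_{gu}\Sigma$, yielding the equality $(\ell_g)_* T_u\Sigma = T_{gu}\Sigma$ required by Definition~\ref{Tangent Preserving}. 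Equality, as opposed to a single inclusion, can also be obtained by rerunning the inclusion argument with $g^{-1}$ in place of $g$, using $G_{gu}(\Sigma) = K$ from Lemma~\ref{prop:Gs_is_H}.

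I expect no serious obstacle: the lemma is essentially the infinitesimal shadow of the set identity $K\Sigma = \Sigma$. The only point deserving a little care is the passage from the setwise invariance $\ell_g(\Sigma) = \Sigma$ to the tangent-space statement, namely checking that the ambient differential $(\ell_g)_*$ genuinely restricts to the differential of the induced self-map of $\Sigma$. This relies on $\Sigma$ being embedded, so that $T_u\Sigma$ sits inside $T_uG$ as an honest subspace and the curve-differentiation computation above is legitimate.
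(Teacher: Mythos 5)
Your proposal is correct and follows essentially the same route as the paper's proof: identify $G_u(\Sigma)=K$ via Lemma~\ref{prop:Gs_is_H}, observe $g\Sigma = gKg_0 = Kg_0 = \Sigma$ so that $\ell_g$ restricts to a diffeomorphism of $\Sigma$, and conclude $(\ell_g)_*T_u\Sigma = T_{gu}\Sigma$. The only difference is that you spell out the curve-differentiation and embeddedness details that the paper leaves implicit.
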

\begin{proof}
    Let $g \in G_u = K$ be arbitrary. By Lemma \ref{prop:Gs_is_H}, we have $g\Sigma = gKg_0 = Kg_0 = \Sigma$, so $\ell_g \colon \Sigma \to \Sigma$ is a diffeomorphism, which implies that $(\ell_g)_* T_u\Sigma = T_{gu}\Sigma$. Thus, $\Sigma$ is tangent preserving.
\end{proof}

The tangent preserving propriety tells us that we can left translate a vector along $\Sigma$, and still stay in the tangent space of $\Sigma$. See Figure \ref{fig:tangent preserving comparison} for a schematic drawing. 
\begin{lemma}\label{lemma:left_invariant_surface}
    Assume, as before, that $K \leq G$ is a subgroup and let $\Sigma = Kg_0$ for some $g_0 \in G$. If there exists a smooth function $s \in C^\infty(G)$ such that $s^{-1}(\set{0}) = \Sigma$, then for all $u \in S$, $g \in G_u(\Sigma)$, there exists an $\alpha_{u,g} \in \R$ such that $\alpha_{u, g} ds_u = (\ell_g)^*ds_{ug}$.
\end{lemma}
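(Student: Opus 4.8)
The plan is to prove that the two covectors $ds_u$ and $(\ell_g)^*ds_{gu}$, which both live in $T_u^*G$, are proportional by showing that they have the same hyperplane kernel; since a nonzero covector is determined up to a scalar by its kernel, that scalar will be the desired $\alpha_{u,g}$. First I would record that, because $\Sigma = s^{-1}(\{0\})$ is a regular level set of $s$ (so that $ds$ is nonvanishing along $\Sigma$, consistent with the impact-system convention that $0$ is a regular value), the kernel of $ds_u$ is exactly $T_u\Sigma$, and likewise $\ker ds_{gu} = T_{gu}\Sigma$. I would also note that $g \in G_u(\Sigma) = K$ by Lemma \ref{prop:Gs_is_H}, so that the tangent-preserving property of Lemma \ref{lemma:cosetProp} applies and yields $(\ell_g)_*T_u\Sigma = T_{gu}\Sigma$.

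The key step is then to compute the kernel of the pulled-back covector. For any $v \in T_uG$ one has $\big((\ell_g)^*ds_{gu}\big)(v) = ds_{gu}\big((\ell_g)_*v\big)$, which vanishes precisely when $(\ell_g)_*v \in \ker ds_{gu} = T_{gu}\Sigma$. Since $(\ell_g)_*$ is a linear isomorphism carrying $T_u\Sigma$ onto $T_{gu}\Sigma$, this happens if and only if $v \in T_u\Sigma$, so
\[
    \ker\big((\ell_g)^*ds_{gu}\big) = T_u\Sigma = \ker ds_u.
\]
Both functionals are nonzero — $ds_u \neq 0$ because $0$ is a regular value, and $(\ell_g)^*ds_{gu} \neq 0$ because $(\ell_g)^*$ is an isomorphism on cotangent spaces — and two nonzero linear functionals on the finite-dimensional space $T_uG$ sharing the same codimension-one kernel must be scalar multiples of one another. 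This produces a (necessarily nonzero) $\alpha_{u,g} \in \R$ with $(\ell_g)^*ds_{gu} = \alpha_{u,g}\, ds_u$, as claimed.

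The only delicate point is the kernel identification, and specifically making sure the tangent-preserving property is invoked legitimately: this requires confirming $g \in K = G_u(\Sigma)$ so that Lemma \ref{lemma:cosetProp} is available, and using that $(\ell_g)_*$ is an isomorphism matching the two tangent spaces. Once the kernels are matched, the conclusion is the elementary fact that a hyperplane determines a covector up to scale, so no coordinate computation with $s$ is needed.
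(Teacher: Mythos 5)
Your proposal is correct and follows essentially the same route as the paper's own proof: both identify $\ker ds_u = T_u\Sigma$, use the tangent-preserving property of the right coset to show $\ker\big((\ell_g)^*ds_{gu}\big) = T_u\Sigma$ as well, and conclude that two covectors sharing a codimension-one kernel are proportional. Your version is in fact slightly more careful than the paper's, since you explicitly check that both functionals are nonzero before asserting proportionality.
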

\begin{proof}
    Since $\Sigma = Kg_0$, we use Lemma \ref{Tangent Preserving} states that $(\ell_g)_*T_u\Sigma = T_{gu}\Sigma$ for any $g \in K$. By the definition of $s \in C^\infty(G)$, we know $T_u\Sigma = \ker ds_u$ for all $u \in \Sigma$. Then applying the tangent preserving property, 
    \vspace*{-5mm}
    \begin{equation*}
        \ker ds_u = \big(\ell_{g^{-1}}\big)_*\ker ds_{gu} = \ker((\ell_g)^*ds_{gu}).
    \end{equation*}
    As $T_u\Sigma = \ker ds_u = \ker((\ell_g)^*ds_{gu})\subset T_uG$ is a codimension 1 subspace, it follows that $ds_u$ and $(\ell_g)^*ds_{gu}$ are linearly dependent.
    Since $T_u\Sigma = \ker ds_u = \ker((\ell_g)^* ds_{gu})$ is an codimension 1 vector field, it follows that the range of both $ds_u$ and $ds_{gu}$ is one dimensional and thus $ds_u$ and $ds_{gu}$ must differ by a constant i.e. there exists an $\alpha_{u, g} \in \mathbb{R}$ such that $\alpha_{u, g}  ds_u = ds_{ug}$.
\end{proof}

Hence, if the tangent preserving property is fulfilled, the only information that we lose by translating vectors along $\Sigma$ is contained within a constant.
        
With the help of the previous lemmas, we can address the issues in Remark \ref{ref:HLP_issues} in the theorem below. 
\begin{theorem}[Impact Lie-Poisson Reduction]\label{thm:Impact_Reduction}
    Let $G$ be a Lie group, $g_0 \in G$ be arbitrary, and $\Sigma = Kg_0 \subseteq G$ be a right coset of codimension 1 for a closed, normal, codimension 1 Lie subgroup $K$ of $G$. Denote the natural projection map by $\pi \colon G\to K \backslash G$, and the Lie algebra of $K$ by $\mathfrak{K}$.

    Let $H \colon T^*G \to \mathbb{R}$ be a natural left-invariant Hamiltonian and let $h = H|_{\mathfrak{g}^*}$ be its restriction to the identity.
    Suppose $(g(t), p(t))$ follows the hybrid flow $\varphi_t^\mathcal{H}$ and let $\zeta(t) = (\ell_{g(t)})^*p(t)$.
    Let $\sigma \colon K \backslash G\to G$ be a local section, let $q \in K \backslash G$, and let $\Delta\zeta \in \Ann(\mathfrak{K})$ be such that $h(\zeta) = h(\zeta + \Delta\zeta)$.
    Then, the equations of motion can be written as
    \begin{gather}
        \begin{cases}
            \dot{\zeta} = \ad_{dh}^*\zeta, \\
            \dot{q} = d\pi_{\sigma(q)} \big(\ell_{\sigma(q)}\big)_*\zeta^\sharp,
        \end{cases} \quad q \not\in \pi(\Sigma),\label{eq:ILP_cont}\\
        \begin{cases}
            \zeta \mapsto \zeta + \Delta\zeta, \\
            q\mapsto q,
        \end{cases}\qquad\qquad\ q \in \pi(\Sigma),\label{eq:LP_impact}
    \end{gather}
\end{theorem}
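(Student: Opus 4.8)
The plan is to split the argument into the continuous and the discrete parts, inheriting the momentum equation $\dot\zeta = \ad^*_{dh}\zeta$ directly from the classical Lie–Poisson result \ref{thm:cont reduction} (already recorded in \eqref{eq:LP cont} via Proposition \ref{prop:eom for Lie algebras}) and concentrating the actual work on (i) descending the configuration equation $\dot g = (\ell_g)_*\zeta^\sharp$ from $G$ to $K\backslash G$ and (ii) rewriting the impact \eqref{eq:LP impact} as the claimed translation $\zeta\mapsto\zeta+\Delta\zeta$ with $\Delta\zeta\in\Ann(\mathfrak K)$. Throughout I would use that, since $K$ is normal, $\pi\colon G\to K\backslash G$ is a Lie group homomorphism satisfying $\pi\circ\ell_k=\pi$ for every $k\in K$, and that $\pi(\Sigma)=\pi(Kg_0)$ collapses to the single point $Kg_0\in K\backslash G$; in particular $g\in\Sigma$ if and only if $\pi(g)\in\pi(\Sigma)$. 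This last equivalence is exactly what is needed to settle the second difficulty of Remark \ref{ref:HLP_issues}: the switching condition between \eqref{eq:ILP_cont} and \eqref{eq:LP_impact} depends only on the reduced variable $q$.

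For the continuous configuration equation I would begin from $\dot q = d\pi_g\,\dot g = d\pi_g(\ell_g)_*\zeta^\sharp$ and eliminate the dependence on the full element $g$ in favour of the section value $\sigma(q)$. Writing $g=k\sigma(q)$ with $k\in K$ (possible since $\pi(g)=\pi(\sigma(q))=q$), I factor $(\ell_g)_*=(\ell_k)_*(\ell_{\sigma(q)})_*$ and differentiate $\pi\circ\ell_k=\pi$ at $\sigma(q)$ to get $d\pi_g\circ(\ell_k)_*=d\pi_{\sigma(q)}$. Composing these yields $d\pi_g(\ell_g)_*\zeta^\sharp=d\pi_{\sigma(q)}(\ell_{\sigma(q)})_*\zeta^\sharp$, which is precisely \eqref{eq:ILP_cont} and at the same time shows the right-hand side is independent of the chosen representative over $q$. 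This is the routine half of the proof.

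The substance lies in the impact. Since $g\mapsto g$, the reduced position obeys $q\mapsto q$ trivially. For the momentum, set $\beta\coloneqq(\ell_g)^*ds_g\in\mathfrak g^*$; using left-invariance of the metric (so that $(\ell_{g^{-1}})_*(ds_g)^\sharp=\beta^\sharp$) the map in \eqref{eq:LP impact} becomes the co-metric reflection
\[
    \Delta\zeta = -2\,\frac{\mTse^{-1}(\beta,\zeta)}{\mTse^{-1}(\beta,\beta)}\,\beta .
\]
I then verify the two defining properties of $\Delta\zeta$. First, $\Delta\zeta\in\Ann(\mathfrak K)$: for $\xi\in\mathfrak K$, normality gives $(\ell_g)_*\xi=(r_g)_*(\mathrm{Ad}_g\xi)$ with $\mathrm{Ad}_g\xi\in\mathfrak K$, and the fundamental vector fields $(r_g)_*(\,\cdot\,)$ of the left $K$-action on $\Sigma$ are tangent to $\Sigma=\ker ds_g$ at $g\in\Sigma$; hence $\beta(\xi)=ds_g((\ell_g)_*\xi)=0$, and since $\Delta\zeta$ is a scalar multiple of $\beta$ it too annihilates $\mathfrak K$. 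Second, energy preservation $h(\zeta+\Delta\zeta)=h(\zeta)$ is the Lie-algebra form of the condition $H^-=H^+$ in \eqref{Weierstrass–Erdmann corner conditions}, and a direct expansion confirms the reflection above leaves the co-metric quadratic form $\mTse^{-1}(\zeta,\zeta)$ invariant (the potential of a left-invariant natural Hamiltonian is constant). Because $\codim K=1$, the space $\Ann(\mathfrak K)$ is one-dimensional, so these two conditions pin down $\Delta\zeta$ up to the trivial solution $0$, and the nonzero solution is the stated impact.

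Finally I would confirm that $\Delta\zeta$ depends only on the reduced data $(q,\zeta)$. By Lemma \ref{lemma:left_invariant_surface}, replacing the representative $g$ over $q$ by another changes $\beta=(\ell_g)^*ds_g$ only by a nonzero scalar, and the reflection formula for $\Delta\zeta$ is homogeneous of degree zero in $\beta$; hence $\Delta\zeta$ is unchanged, which resolves the first issue of Remark \ref{ref:HLP_issues}. The anticipated main obstacle is exactly this combination of $\Ann(\mathfrak K)$-membership and representative-independence: both depend essentially on the normality of $K$ (to keep $\mathrm{Ad}_g\xi$ inside $\mathfrak K$) and on the right-coset/tangent-preserving structure of Lemmas \ref{lemma:cosetProp} and \ref{lemma:left_invariant_surface}, and it is at this point that the full strength of the hypotheses is consumed.
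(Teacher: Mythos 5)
Your proposal is correct, and the continuous half (writing $g=k\sigma(q)$, using $\pi\circ\ell_k=\pi$ to get $d\pi_g\circ(\ell_k)_*=d\pi_{\sigma(q)}$, and concluding representative‑independence) is essentially the paper's argument verbatim. On the impact half you take a genuinely different route. The paper works forward from the Weierstrass--Erdmann corner conditions: it left-translates the variation, writes $\delta g=(r_{g_0})_*\dot k$ using the right-coset structure, and uses normality to show $(\ell_{g^{-1}})_*\delta g=(\mathrm{Ad}_{g_0^{-1}})(\ell_{k^{-1}})_*\dot k$ sweeps out all of $\mathfrak{K}$, so that $\zeta^+-\zeta^-\in\Ann(\mathfrak{K})$ drops out directly; the explicit reflection formula and its $g$-independence are deferred to Corollary \ref{cor:4.10}. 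You instead start from the explicit formula of Proposition \ref{prop:eom for Lie algebras}, recognize it as the co-metric reflection in $\beta=(\ell_g)^*ds_g$, verify $\beta\in\Ann(\mathfrak{K})$ by showing the fundamental vector fields of the left $K$-action are tangent to $\Sigma$ (again via normality), check energy invariance by direct expansion, and then invoke one-dimensionality of $\Ann(\mathfrak{K})$ together with the degree-zero homogeneity in $\beta$ and Lemma \ref{lemma:left_invariant_surface} to get uniqueness and representative-independence. Your version buys a self-contained proof that simultaneously establishes the content of Corollary \ref{cor:4.10} and makes the "two conditions pin down $\Delta\zeta$ up to the root $c=0$" structure explicit; the paper's version is closer to the variational origin of the impact map and generalizes more readily to situations where an explicit closed form for $\Delta\zeta$ is not available. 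Both consume the hypotheses (normality, right coset, $\codim K=1$) at the same points, and I see no gap in your argument.
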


\begin{remark}
    In classical Lie-Poisson reduction, the space can be reduced from $T^*G$ to $\mathfrak{g}^*$. In the hybrid case, the reduction stops at $\mathfrak{g}^* \times \left(K \backslash G\right)$. This extra term is used to determine whether or not an impact occurs.
\end{remark}

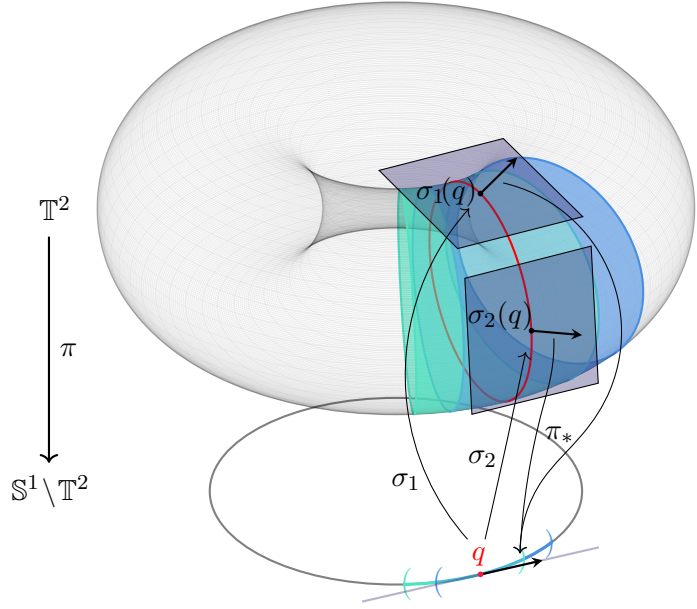
\begin{figure}
    \centering
    \begin{tikzpicture}[xscale = 1.5, yscale = 1.5]
    \def\a{60}
    \draw[thick, opacity = .5] (0,-2.5) ellipse (1.65 and {1.65*cos(\a)});
    \fill[MariaGreen, opacity = .6](.25919,.18986) to[out = 186.65, in = 3.4748] (.03939,.17468) .. controls (-0.03622,0.16876) and (0.0902,-1.8322) .. (.1601,-1.8216) to[out = 3.4748, in = 186.65] (.4908, -1.7967) .. controls (0.2491,-1.8334) and (-0.01162,0.19972) .. cycle;
    \fill[MariaBlue!50!MariaGreen, opacity = .6] (1.5395, -1.5136) to[out = 204.3, in = 6.65] (.4908, -1.7967) .. controls (0.2491,-1.83346) and (-0.01162,0.19972) .. (.25919,.18986) to[out = 6.65, in = 204.3] (.7005,.30197) .. controls (0.024,0.0145) and (0.8,-1.82) .. cycle;
    \fill[MariaBlue, opacity = .6](.7754,.3442) to[in = 24.8, out = 216.2] (.7005,.30197) .. controls (0.015,0.02) and (0.8,-1.82) .. (1.5395, -1.5136) to[out = 24.8, in = 216.2] (1.9566,-1.2697)  .. controls (1.185,-1.7875) and (0.029,-0.3) .. cycle;
    \draw[thick, shift={(1.36600887032, -0.462741765518)}, MariaBlue, rotate = 36.2] (0,1) arc[start angle=90, end angle=270,x radius= 0.7151, y radius = 1];
    \draw[thick, shift={(1.12, -0.605825844221)}, MariaBlue!50!MariaGreen, rotate = 24.8] (0,1) arc[start angle=90,end angle=270,x radius= 0.587895,y radius = 1];
    \draw[thick, shift={(0.74998,-0.734851450533)}, red, rotate = 14.3] (0,1)arc[start angle=90,end angle=270,x radius = 0.392, y radius = 1];
    \draw[thick, shift={(0.37499835136, -0.80341089059)}, MariaBlue!50!MariaGreen, rotate = 6.65] (0,1) arc[start angle=90,end angle=270,x radius = 0.1967, y radius = 1];
    \draw[thick, shift={(0.1,-0.823483280754)}, MariaGreen, rotate = {180+3.4748}] (0,-1) arc[start angle=-90,end angle=90,x radius = 0.052882 ,y radius = 1];
    \foreach \i in {-180, ..., 195}{%
        \def\ya{cos(deg(\i))*sin(\a)}
        \def\xr{1.65-sin(deg(\i))}
        \def\yr{cos(\a)*(1.65-sin(deg(\i)))}
        \draw[opacity = .05] (0,{\ya}) ellipse ({\xr} and {\yr});
        }
    \fill[MariaGreen, opacity = .6](.25919,.18986) to[out = 186.65, in = 3.4748] (.03939,.17468) .. controls (.115,.1806) and (.23,-1.811) .. (.1601,-1.8216) to[out = 3.4748, in = 186.65] (.4908, -1.7967) .. controls (.7325,-1.76) and (.53,.18) .. cycle;
    \fill[MariaBlue!50!MariaGreen, opacity = .6] (1.5395, -1.5136) to[out = 204.3, in = 6.65] (.4908, -1.7967) .. controls (.7325,-1.76) and (.53,.18) .. (.25919,.18986) to[out = 6.65, in = 204.3] (.7005,.30197) .. controls (1.375,.59) and (2.25,-1.1) .. cycle;
    \fill[MariaBlue, opacity = .6](.7754,.3442) to[in = 24.8, out = 216.2] (.7005,.30197) .. controls (1.375,.59) and (2.25,-1.1) .. (1.5395, -1.5136) to[out = 24.8, in = 216.2] (1.9566,-1.2697)  .. controls (2.71,-.62) and (1.54,.85) .. cycle;
    \draw[thick, shift={(1.36600887032, -0.462741765518)}, MariaBlue, rotate = 36.2] (0,-1) arc[start angle=-90, end angle=90,x radius= 0.7151, y radius = 1];
    \draw[thick, shift={(1.12000008299, -0.605825844221)}, MariaBlue!50!MariaGreen, rotate = 24.8] (0,-1) arc[start angle=-90,end angle=90,x radius= 0.587895,y radius = 1];
    
    \draw[thick, shift={(0.749982254857,-0.734851450533)}, red, rotate = 14.3] (0,-1)arc[start angle=-90,end angle=90,x radius = 0.392, y radius = 1];
    \fill[red] (.987,-1.711) -- (1.012,-1.708) -- (.997,-1.698) -- cycle;
    \draw[thick, shift={(0.37499835136, -0.80341089059)}, MariaBlue!50!MariaGreen, rotate = 6.65] (0,-1) arc[start angle=-90,end angle=90,x radius = 0.1967, y radius = 1];
    \draw[thick, shift={(0.1,-0.823483280754)}, MariaGreen, rotate = 3.4748] (0,-1) arc[start angle=-90,end angle=90,x radius = 0.052882, y radius = 1];
    \draw[thick, ->] (-3.075, -.25) -- (-3.075,-2.25);
    \draw (-2.9,-1.25) node [anchor = center] {$\pi$};
    
    \draw (-3.075, 0) node [anchor = center]{{\color{white}${}^2$}$\mathbb{T}^2{}$};
    \draw (-3.075, -2.5) node [anchor = center]{{\color{white}$\mathbb{T}^2$}$\mathbb{S}^1 \smm \mathbb{T}^2${\color{white}$\mathbb{S}^1$}};

    \filldraw[fill = blue2, fill opacity = .375](-.149,.3356) -- (.9517,.6174) -- (1.6562, -.0736) -- (.5469,-.3554) -- cycle;
    \filldraw[fill = blue2, fill opacity = .375](1.7305, -.3335) -- (1.7932, -1.5455) -- (.6735, -1.82) -- (.6108, -.6179) -- cycle;

    \begin{scope}
        \clip (.07,-3.285) rectangle (.35,-3.35);
        \draw[very thick, MariaGreen](0,-2.5) ellipse (1.65 and {1.65*cos(\a)});
    \end{scope}
    \begin{scope}
        \clip(.35,-3.35) rectangle(1.145,-3.075);
        \draw[very thick, MariaBlue!50!MariaGreen](0,-2.5) ellipse (1.65 and {1.65*cos(\a)});
    \end{scope}
    \begin{scope}
        \clip(1.145,-3.1) rectangle (1.39,-2.9);
        \draw[very thick, MariaBlue](0,-2.5) ellipse (1.65 and {1.65*cos(\a)});
    \end{scope}
    
    \draw (0.095, -3.323)  node [MariaGreen, anchor=center]{\resizebox{1.6mm}{3.2mm}{$($}};
    \draw (0.375, -3.303)  node [MariaBlue,  anchor=center]{\resizebox{1.5mm}{3.05mm}{$($}};
    \draw (1.120, -3.106)  node [MariaGreen, anchor=center]{\resizebox{1.475mm}{2.925mm}{$)$}};
    \draw (1.366, -2.9627) node [MariaBlue,  anchor=center]{\resizebox{1.45mm}{2.875mm}{$)$}};

    \draw[shift={(0,.184)},blue2, thick, draw opacity = .375] (1.8, -3.18) -- (-.3, -3.66);

    \draw[shift={(0,.184)},-stealth, thick](.75, -3.42) -- (1.3, -3.295);
    \node at (.75, -3.235) [red1,circle,fill,inner sep=.75pt]{};
    
    \draw[->] (.635, -2.925) to[curve through={(.1,-1.75)}] (.65, 0);
    \draw[->] (.79, -2.925) to[curve through={(1.05,-1.75)}] (1.15, -1.27);
    \draw (-.13, -2.25) node [anchor=north west] {$\sigma_1$};
    \draw (.55, -2.05) node [anchor=north west] {$\sigma_2$};
    

    \node at (.7493, .131) [circle,fill,inner sep=.75pt]{};
    \node at (1.202, -1.0817) [circle,fill,inner sep=.75pt]{};
    
    \draw[thick, -stealth](.7493, .131) -- (1.075, .45);
    \draw[thick, -stealth](1.202, -1.0817) -- (1.65, -1.12);

    \draw (.735, -3.075) node [anchor = center, red] {$q$};

    \draw (.45, .15) node [anchor = center] {$\sigma_{1}\!(q)$};
    \draw (.9275, -.95) node [anchor = center] {$\sigma_{2}(q)$};

    \draw[->] (1.4, -1.15) to[curve through={(1.375, -1.47) .. (1.3, -1.77)}] (1.1, -3.05);
    \draw[->] (.95, .225) to[curve through={(1.9, -1.6) .. (1.65, -2) .. (1.3, -2.4)}] (1.1, -3.05);
    \draw (1.45, -2.) node [anchor = center] {${\scalebox{1}{$\pi_*$}}$};
\end{tikzpicture}
    \caption{Depiction of independence of local section as guaranteed by \ref{thm:Impact_Reduction}. The lift of $q = [g_0]$ in the case of $\mathbb{S}^1 \subset \mathbb{T}^2$ is the circle depicted in red. The tangent vectors to any section project down to the same $\dot{q}$ in the reduced space $\mathbb{S}^1\backslash \mathbb{T}^2.$
    }
    \label{fig:independence of local section}
\end{figure}
\begin{proof}
    What needs to be shown is that 1) the dynamics on $q \in K \backslash G$ are well-defined, i.e. the choice of section does not matter and 2) the reduced hybrid equations \eqref{eq:ILP_cont} and \eqref{eq:LP_impact} do not depend on $g\in G$. 
    
    We begin with the first part. There are two components: the discrete and the continuous equations. For the continuous part, the equations of motion are given as in Proposition \ref{prop:eom for Lie algebras}
    \begin{align*}
        \begin{cases}
            \dot{\zeta} = \ad^*_{dh}\zeta; \\
            \dot{g} = (\ell_{g})_* \zeta^\sharp,
        \end{cases} 
        \text{ if } g \notin \Sigma.
    \end{align*}

    As $K \leq G$ is a subgroup that is closed in the topology of $G$, there exists a unique manifold structure such that $\pi \colon G \to K \sm G$ is a smooth surjective submersion \cite{tu2017differential}. Therefore, for a smooth curve $g(t) \in G$, we have that $q(t) \coloneqq \pi(g(t)) \in K \sm G$ is a smooth curve of equivalence classes. Additionally, as $K \leq G$ has codimension 1, the quotient manifold is a 1-dimensional manifold.

    For a curve in $g(t)\in G$ satisfying $\dot{g} = \left(\ell_g\right)_*\zeta^\sharp$, the projected curve, $q = \pi(g)$, satisfies
    \begin{equation}\label{eq: q_dynamics}
        \frac{d}{dt} q(t) 
        = \frac{d}{dt} \pi\left(g(t)\right) 
        = d\pi_{g(t)} \left( \frac{d}{dt}g(t) \right) 
        = d\pi_{g(t)} \left( \ell_{g(t)}\right)_*\zeta^\sharp.
    \end{equation}
    For all $k\in K$ and $g\in G$, $\pi(kg) = \pi(g)$, so differentiating $\pi(g)$ yields
    \begin{equation*}
        \left(\ell_k\right)^* d\pi_{kg} = d\pi_g.
    \end{equation*}
    Let $\sigma \colon K \backslash G \to G$ be a smooth (local) section. Choosing a local section is equivalent to choosing a representative $g_0(t)$ of the equivalence class $q(t)$. Any curve $g(t)\in G$ can be written as $g(t) = k(t)\sigma(\pi(g(t)))$ for some curve $k(t)\in K$ as in Figure \ref{fig:LP proof}. Calling $g_0(t) \coloneqq \sigma(\pi(g(t)))$, the dynamics on $q = \pi(g)$ in equation \eqref{eq: q_dynamics} are
    \begin{equation*}
        \dot{q} 
        = d\pi_{k(t)g_0(t)}\big( \ell_{k(t)g_0(t)}\big)_*\zeta^\sharp 
        = \left(\ell_{k(t)}\right)^* d\pi_{k(t)g_0(t)} \left(\ell_{g_0(t)}\right)_* \zeta^\sharp 
        = d\pi_{g_0(t)}\left(\ell_{g_0(t)}\right)_*\zeta^\sharp.
    \end{equation*}
    Note that the previous equation does not depend on $g(t)$ anymore, but only on $q(t)$ and on the choice of local section $\sigma$. 
    \begin{figure}[!ht]
        \centering
        \begin{tikzpicture}[x=0.75pt,y=0.75pt,yscale=-1,xscale=1]
    \draw [thick, color=green!80!black] (96,107.1) .. controls (184,86.1) and (267,191.1) .. (308,161.1);

    \draw[thick,blue] (96,85.1) .. controls (163,104.1) and (278,48.1) .. (308,110.1);

    \draw[thick] (97,24) -- (308,24) -- (308,179.1) -- (97,179.1) -- cycle;

    \draw[thick] (97,203) -- (308,203);
    
    \draw [very thick, -stealth, color=green!70!blue] (210, 125) -- (210, 90.1);
    \draw [very thick, -stealth, color=green!70!blue] (250, 140) -- (250, 87.5);
    \draw [very thick, -stealth, color=green!70!blue] (290, 157.5) -- (290, 100);
    
    \draw (310, 203) node (KG) [anchor = west] {$K\! \sm\! G$};
    \draw[->] (332,110) -- (332,190);
    
    \fill[fill=red1] (138, 106) circle (2pt);
    \fill[fill=red1] (138,203) circle (2pt);
    
    \draw [color=red1, dash pattern={on 4.5pt off 4.5pt}] (138,25.1) -- (138,179.1);

    \draw (270,115) node [anchor = center][color=green!70!blue] {$k(t)$};
    \draw (215,55) node [anchor=north west,blue] {$g(t)$};
    \draw (215,150) node [anchor=north west, color=green!80!black] {$g_{0}(t)$};
    \draw (93, 203) node [anchor = east] {$q(t)$};
    
    \draw (332, 100) node [anchor= center] {$G$};
    \draw (150,100) node [anchor = center, red1] {$g_0$};
    \draw (125,140) node [anchor = center, red1] {$\Sigma$};
    \draw (138,208) node [anchor=north,red1] {$[g_0]$};
    \draw (340,150) node [anchor=center] {$\pi$};
    
    \draw[white] (45, 198) -- (45, 208);
\end{tikzpicture}
        \caption{Schematic drawing of Impact Lie-Poisson reduction, where $g(t)$ represents the true dynamics of the system, which differs from $g_0(t)$ by some subgroup element $k(t)$ for every $t$. Since $\Sigma$ projects down to one point $[g_0] \in K\backslash G$,it is sufficient to keep track of $q(t)$ in order to figure out when impacts happen.}
        \label{fig:LP proof}
    \end{figure}
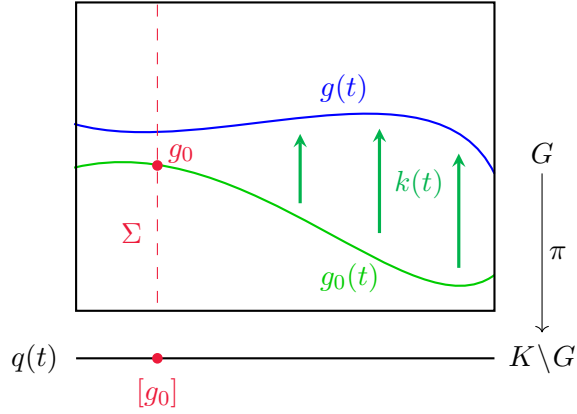
    Now, suppose we choose a different section $\tilde{g}_0(t) = \tilde{\sigma}(\pi(g(t)))$. Then, there exists $\tilde{k}(t)\in K$ such that $\tilde{g}_0(t) = \tilde{k}(t)g_0(t)$ (see Figure \ref{fig:independence of local section}). This is because both $g_0$ and $\tilde{g}_0$ are lifts of $q(t)$. Comparing these two different representatives yields
    \begin{equation*}
        \begin{split}
            d\pi_{\tilde{g}_0(t)}\left(\ell_{\tilde{g}_0(t)}\right)_* \zeta^\sharp 
            = & \, d\pi_{\tilde{k}(t)g_0(t)}\big(\ell_{\tilde{k}(t)g_0(t)}\big)_* \zeta^\sharp \\
            = & \, \big(\ell_{\tilde{k}(t)}\big)^* d\pi_{g_0(t)}\left(\ell_{g_0(t)}\right)_* \zeta^\sharp \\
            = & \, d\pi_{g_0(t)}\left(\ell_{g_0(t)}\right)_*\zeta^\sharp.
        \end{split}
    \end{equation*}
    where the last equality comes from the fact that $\pi(kg) = \pi(g)$ for all $k \in K$ and $g \in G$.
    Therefore, the equations of motion are independent on the choice of section.

    In order to find the impact map we use the corner conditions \eqref{Weierstrass–Erdmann corner conditions} and adapt them for the case of Lie groups. 

    Since the impact surface is time independent $\delta t = \mathbb{R}$ and we get the energy conservation condition: $H^+ = H^-$. In particular, $H^+|_{\mathfrak{g}} = H^-|_{\mathfrak{g}}$ and $h^+|_{\mathfrak{g}} = h^-|_{\mathfrak{g}}$.
     For the momentum equation, we left translate to the identity
    \begin{equation*}
        ((\ell_g)^* (p^+ - p^-))((\ell_{g^{-1}})\delta g) = 0 \implies \left( \zeta^+ - \zeta^- \right)(\ell_{g^{-1}})_*\delta g = 0
    \end{equation*}
    where  $\zeta^\pm := \ell_g^*p^\pm$.

    From the corner conditions we know that $\delta g$ has to lie in $T_g\Sigma$ and since $\Sigma$ is a right coset $\delta g = (r_{g_0})_* \dot{k}$ for $\dot{k} \in T_kK$. Therefore, 
    \begin{equation*}
        (\ell_{g^{-1}})_*\delta g = (\ell_{g_0^{-1}})_*(\ell_{k^{-1}})_*(r_{g_0})_* \dot{k} = (\t{Ad}_{g_0^{-1}})_* (\ell_{k^{-1}})_* \dot{k}.
    \end{equation*}
    Since $K $ is a subgroup $(\ell_{k^{-1}})_*\dot{k} = \widetilde{\delta k} \in \mathfrak{K}$. Additionally, $K$ is normal, so its lie algebra is closed under the adjoint, i.e., $\mathrm{Ad}_g\xi\in \mathfrak{K}$ for any $g\in G$ and $\xi\in\mathfrak{K}$.
    In particular this holds for $g = g_0$ and for $\xi = \widetilde{\delta k}$ so $(\t{Ad}_{g_0^{-1}}) (\ell_{k^{-1}})_* \dot{k} \coloneqq \delta k \in \mathfrak{K}$. Since $\delta g$ can be \textit{any} vector in $T_g\Sigma$, $\delta k $ spans $\mathfrak{K}$. 
    
    For the $q$ variable $g^+ = g^-$ implies that $\pi(g^+) = \pi(g^-)$, so $q^+ = q^-$. Hence we obtain the impact map
    \begin{align*}
        \begin{cases}
            \zeta^+ = \zeta^- + \Delta \zeta\\
            q^+ = q^-
        \end{cases} \text{for all } \Delta\zeta \in \Ann(\mathfrak{K}) \text{ such that }h(\zeta^+) = h(\zeta^-).
    \end{align*}
 
    The impact equation on $K \backslash G$ are identity, independent of the local section chosen for the continuous part. Moreover, the trajectory in $q(t)$ is continuous, as shown in Figure \ref{fig:reduction scheme}. Impacts occur when $g(t)g_0(t)^{-1} \in K$ i.e. when the projection $q(t)$ passes through the equivalence class $[g_0]$. This process is also independent on $g(t)$.
\end{proof}
\begin{remark}
If the Hamiltonian is natural, $\zeta^\sharp$ is defined such that $\mTse(\zeta^\sharp, \, \cdot \, ) = \zeta$. However, $\zeta^\sharp$ is well-defined even if the Hamiltonian is not natural as long as the fiber derivative is invertible. The formula is given by $\zeta^\sharp \coloneqq \mathbb{F}L^{-1}(\zeta)$. 
\end{remark}

\begin{remark}
     The proof requires the existence of a \textit{local} section, and the choice of the section $\sigma:K\backslash G\to G$ does not matter. As such, no global section needs to exist.
\end{remark}

In the following we give an explicit expression for $\Delta \zeta$.

\begin{corollary}\label{cor:4.10}
If $H \colon T^*G \to \mathbb{R}$ is a natural left invariant Hamiltonian of the form $H(g, p) = \frac{1}{2}\mTse^{-1}(p,p)$ where $\mTse$ is a Riemanninan metric, and if $K$ is a connected, closed, and codimension 1 subgroup of $G$, then there exists a smooth function $s \colon G \to \mathbb{R}$ such that $\Sigma = s^{-1}(\set{0})$ and $\Delta \zeta$ is given by
\begin{gather}\label{eq:Delta_zeta}
   \Delta \zeta = - \frac{2ds_{g_0}((\ell_{g_0^{-1}})_*(\zeta^-)^\sharp)}{(\ell_{g_0}^*)ds_{g_0}((\ell_{g_0^{-1}})_* \nabla s_{g_0})}(\ell_{g_0})^*ds_{g_0}.
\end{gather}
\end{corollary}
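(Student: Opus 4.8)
The plan is to derive \eqref{eq:Delta_zeta} by specializing the general impact rule \eqref{eq:LP impact} of Proposition \ref{prop:eom for Lie algebras} to a carefully chosen defining function $s$ and to the representative $g=g_0$, exploiting the representative-independence established in Theorem \ref{thm:Impact_Reduction}. Two things must be produced: (i) a smooth $s$ with $\Sigma=s^{-1}(\{0\})$ and $0$ a regular value, and (ii) the identification of \eqref{eq:LP impact} at $g_0$ with the stated expression.

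First I would construct $s$. Since $K$ is normal (as in Theorem \ref{thm:Impact_Reduction}) of codimension one, its Lie algebra $\mathfrak{K}$ is a codimension-one ideal, so $\mathfrak{g}/\mathfrak{K}\cong\mathbb{R}$ is abelian and hence $[\mathfrak{g},\mathfrak{g}]\subseteq\mathfrak{K}$. Choosing a nonzero $\nu\in\Ann(\mathfrak{K})\subset\mathfrak{g}^*$ and letting $\nu^L$ be the left-invariant $1$-form with $\nu^L_e=\nu$, the structure equation gives $d\nu^L(X^L,Y^L)=-\nu([X,Y])=0$ for left-invariant fields, so $\nu^L$ is closed and nowhere zero. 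Integrating $\nu^L$ produces $s$ with $ds=\nu^L$ left-invariant; its level sets are the leaves of the left-invariant involutive distribution $\ker\nu^L$, whose leaf through $g_0$ is the left coset $g_0K$. Because $K$ is normal this equals $Kg_0=\Sigma$, and because $K$ is connected (and closed) the leaf is exactly the single coset $\Sigma$; after an additive normalization $s^{-1}(\{0\})=\Sigma$, giving (i). When $K\backslash G\cong\mathbb{S}^1$ only a neighborhood of $\Sigma$ carries such an $s$, which suffices since \eqref{eq:Delta_zeta} is pointwise at $g_0$.

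Next, left-invariance of $H=\tfrac12\mTse^{-1}(p,p)$ forces the cometric $\mTse^{-1}$, and hence the metric $\mTse$ and the sharp of Definition \ref{def:musical_isomorphisms}, to be left-invariant; together with the left-invariance of $ds$ this makes $\nabla s=(ds)^\sharp$ a left-invariant vector field. Therefore $(\ell_g)^*ds_g=\nu$ and $(\ell_{g^{-1}})_*\nabla s_g=\nu^\sharp$ for every $g\in\Sigma$, so each factor in \eqref{eq:LP impact} is independent of the chosen representative; this recovers the independence asserted in Theorem \ref{thm:Impact_Reduction} and justifies evaluating at $g=g_0$. Setting $\Delta\zeta=\zeta^+-\zeta^-$ in \eqref{eq:LP impact}, substituting $g=g_0$, and rewriting each pairing through the pullback identity $(\ell_{g_0})^*\alpha(w)=\alpha\big((\ell_{g_0})_*w\big)$ and the left-invariance of $ds$ and $\nabla s$ gives precisely \eqref{eq:Delta_zeta}. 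Finally, since $\Sigma$ fixes position, the (here vanishing) potential is unchanged across the impact, so the energy condition $h(\zeta^-)=h(\zeta^+)$ reduces to conservation of kinetic energy, which the reflection \eqref{eq:Delta_zeta} satisfies, and $\Delta\zeta\in\Ann(\mathfrak{K})$ since it is a scalar multiple of $\nu=(\ell_{g_0})^*ds_{g_0}$.

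The step I expect to be the main obstacle is (i): showing that the closed left-invariant form $\nu^L$ integrates to a function whose zero level set is all of $\Sigma=Kg_0$ and nothing more, which is exactly where normality (to make $\nu^L$ closed and to identify $Kg_0=g_0K$), connectedness, and closedness of $K$ enter, and where the $\mathbb{S}^1$ quotient prevents a global $s$. The remaining index bookkeeping needed to match \eqref{eq:LP impact} at $g_0$ with \eqref{eq:Delta_zeta} is routine once the left-invariance of $ds$ and $\nabla s$ is in place.
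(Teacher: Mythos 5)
Your proof is correct, but it reaches \eqref{eq:Delta_zeta} by a genuinely different mechanism than the paper. The paper builds $s$ from the quotient: since $K$ is closed and connected, $K\backslash G$ is a connected $1$-manifold, so (taking $K\backslash G\cong\mathbb{R}$) it sets $s(g)=j(\pi(g))-j(\pi(g_0))$ for an identification $j$; it then starts from the unreduced impact formula of Proposition \ref{prop:eom for Lie algebras} at an arbitrary $g=kg_0\in\Sigma$ and invokes Lemma \ref{lemma:left_invariant_surface} to write $\ell_k^*ds_{kg_0}=\alpha_{k,g_0}\,ds_{g_0}$ (with the corresponding statement for $\nabla s$), after which the scalars $\alpha_{k,g_0}$ cancel between numerator and denominator because the expression is homogeneous of degree zero in $ds$. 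You instead manufacture a defining function whose differential is globally left-invariant, by integrating the closed left-invariant form $\nu^L$ with $\nu\in\Ann(\mathfrak{K})$ (closedness coming from $[\mathfrak{g},\mathfrak{g}]\subseteq\mathfrak{K}$, i.e., from normality), so that every factor $(\ell_g)^*ds_g$ and $(\ell_{g^{-1}})_*\nabla s_g$ is literally constant along $\Sigma$ and no cancellation is needed. Both arguments rest on the same two pillars --- the coset/normality structure of $\Sigma$ and the left-invariance of the metric, which you state explicitly and the paper uses tacitly when it transports $\nabla s_{kg_0}$ back to $g_0$ --- and both quietly restrict to the case $K\backslash G\cong\mathbb{R}$ to obtain a global $s$ (you flag the $\mathbb{S}^1$ obstruction more honestly than the paper's ``without loss of generality''). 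What your route buys is a manifestly representative-independent formula plus the observation that $\Delta\zeta\in\Ann(\mathfrak{K})$ comes for free; what the paper's route buys is that it works for an arbitrary defining function of $\Sigma$, not just the left-invariant one, since the proportionality constants always cancel.
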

\begin{proof}
    Since $K$ is closed and connected, $K \sm G$ is a 1-dimensional connected manifold, and $\pi \colon G \to K \sm G$ is a surjective submersion, and hence, differentiable. Thus $K\backslash G$ must be isomorphic to either $\mathbb{R}$ or $\mathbb{S}^1$. Without loss of generality, assume $K \sm G \cong \mathbb{R}$ and let $j \colon K \sm G \to \mathbb{R}$ be the isomorphism. Define $s$ to be 
    \begin{equation*}
        s \colon G \to \mathbb{R} : g \mapsto j(\pi(g)) - j(\pi(g_0))
    \end{equation*}
    Note that $g \in \Sigma$ if and only if $[g] = [g_0]$, which implies that $s(g) = j(\pi(g)) - j(\pi(g_0)) = 0$. Moreover, since both $j$ and $\pi$ are differentiable, $s$ is also differentiable.

    From Proposition \ref{prop:eom for Lie algebras} we get 
    \begin{align*}
        \Delta \zeta = -\frac{2\zeta^-((\ell_{g^{-1}})_*\nabla s_g)}{ ds_g (\nabla s_g)}(\ell_g)^*d s_g.
    \end{align*}
    This formula is already close to the desired result, but it depends on $g$. Our goal is to use the right coset property in order to eliminate the $g$ dependency. 
    
    From Lemma \ref{lemma:left_invariant_surface} there exists an $\alpha_{u, g} \in \R$ such that $\ell_g^* ds_{ug} = \alpha_{u, g}ds_u$. Let $g = kg_0 \in \Sigma$, $u = k \in G_{g_0}$ and $g = g_0 \in \Sigma$. Then, $\ell_{k}^*ds_{kg_0} = \alpha_{k, g_0}ds_g$ and similarly $(\ell_{k^{-1}})_*\nabla s_{kg_0} = \alpha_{k, g_0}^{-1}\nabla s_{g_0}$. Moreover, $\ell_{kg_0}^* = (\ell_k \circ \ell_{g_0})^* = \ell_{g_0}^*\ell_k^*$, and analogously $(\ell_{(kg_0)^{-1}})_* =( \ell_{g_0^{-1}})_*(\ell_{k^{-1}})_*$. Putting these two observations together into the formula for $\Delta \zeta$ we obtain
    \begin{align*}
        \Delta \zeta = - \frac{2\zeta^-((\ell_{g_0^{-1}})_* \alpha_{k, g_0}^{-1}\nabla s_g)}{ \ell_{g_0}^* \alpha_{k, g_0} ds_{g_0} (\alpha_{k, g_0}^{-1}(\ell_{g_0^{-1}})_*\nabla s_{g_0})}(\ell_{g_0})^* \alpha_{k, g_0} d s_{g_0}.
    \end{align*}
    Since $\alpha_{k, g_0}$ is a nonzero constant, we can pull it out of the pullback and pushforward, and since it appears in both the numerator and the denominator, we are left with
    \begin{align*}
        \Delta \zeta = -  \frac{  2\zeta^-((\ell_{g_0^{-1}})_* \nabla s_{g_0})}{ \ell_{g_0}^*  ds_{g_0} ((\ell_{g_0^{-1}})_*\nabla s_{g_0})}(\ell_{g_0})^*  d s_{g_0}.
    \end{align*}
    The final result follows from the same derivation as the end of the proof of Proposition \ref{prop:eom for Lie algebras}.
\end{proof}

\begin{remark}
    The normality of the subgroup $K$ is crucial for Hybrid Lie-Poisson reduction. If dropped, $\Delta \zeta$ might not be independent of $g$, and hence, the reset equations cannot be reduced. This is illustrated in the following counterexample: Consider the Lie group $G = \t{Aff}(1)$ and the subgroup $K$ of $G$, which are the following sets closed under matrix multiplication 
    \begin{align*}
        G = \left\{ 
        \begin{pmatrix}
            a & b \\ 0 & 1
        \end{pmatrix}
        : a \in \R\sm\set{0} \t{ and } b \in \R\right\}
        \qquad \t{and} \qquad
        K = \left\{ 
        \begin{pmatrix}
            a & 0\\ 0 & 1
        \end{pmatrix}: a \in \R\right\}.
    \end{align*}
    $K$ a codimension 1 closed subgroup of $G$; however, $K$ is not closed under the adjoint, and hence, not normal. Next, choose $g_0 = \begin{psmallmatrix} a_0  & b_0 \\ 0 & 1 \end{psmallmatrix}$ so then for $g \in \Sigma$ and $\delta g \in T_g\Sigma$, we have
    \begin{align*}
        \delta g = \begin{pmatrix} \dot\alpha a_0 & \dot \alpha b_0 \\ 0  & 0\end{pmatrix}
        \qquad \t{for }
        \begin{pmatrix}
            \dot\alpha & 0 \\ 0 & 0
        \end{pmatrix} \in \mathfrak{K}.
    \end{align*}
    However, 
    \begin{align*}
        (\ell_{g^{-1}})_*\delta g = 
        \frac{\dot\alpha}{\alpha} 
        \begin{pmatrix}
            1 & b_0/a_0 \\
            0 & 0
        \end{pmatrix}
    \notin \mathfrak{K}.
    \end{align*}
    The dual of the Lie algebra $\mathfrak{g}$ consists of vectors $P = (p_a, p_b)$, and the dual pairing $P(\dot A) = p_a \dot a + p_b \dot b$ is the usual dot product between $P$ and $(\dot a, \dot b)$. Left translation of $P$ is $\ell_C^*P = (cp_a, \ cp_b)$ where $C = \begin{psmallmatrix}
        c & d \\ 0 & 1
    \end{psmallmatrix}$. With $\zeta = \ell_A P $, the left translated corner conditions become
    \begin{align*}
        \begin{pmatrix}
        a(p_a^+ - p_a^-) & a(p_b^+ - p_b^-) 
    \end{pmatrix}\begin{pmatrix}
        \dot \alpha/\alpha \\
        \dot{\alpha}b_0/\alpha a_0 
    \end{pmatrix} = a (p_a^+ - p_a^-) \frac{\dot \alpha}{\alpha} + \frac{\dot{\alpha}b_0}{\alpha a_0} a(p_b^+ - p_b^-).
    \end{align*}
    Now use the fact that $a = \alpha a_0$ and that the above has to be $0$ for any $\alpha$ due to the corner conditions to obtain
    \begin{equation*}
        a_0(p_a^+ - p_a^-) + b_0(p_b^+ - p_b^-) = 0
    \end{equation*}
    This equation does not depend on $\alpha $ anymore which is good, but it also implies that $(p_a^+ - p_a^-)/(p_b^+ - p_b^-) = -b_0/a_0$. With $p_a^+ - p_a^- = \beta$, $\Delta \zeta = (\beta, -b_0 a\beta/a_0)$ which depends on $a$. So the reset map is not independent of the group element. 
\end{remark}


\section{Applications}\label{sec:applications}
The theory developed above enables us to speed up the computation of the Frobenius-Perron operator for hybrid systems. First, solving the partial differential equation \eqref{eq:chikos} is faster than computing the Frobenius-Perron operator using the definition \eqref{def:hybrid_FP}. For the latter one would need to sample a large number of initial points from the given distribution, and compute their trajectories for a long period of time. This endeavour suffers greatly under curse of dimensionality. Second, by doing impact reduction on the hybrid system at hand we can reduce the dimensionality, which further decreases the computation memory and time. 

In the following we illustrate the theory, and its applicability on a number of diverse examples. First, we consider the classical examples of the bouncing ball and the nonholonomic Chaplygin sleigh. Next, we provide analytical results in the case where the configuration space is $\t{GL}_n(\R)$ and the impact surface is the normal subgroup $\t{SL}_n(\R)$. Finally, we employ our method to analyse an SIR model of an epidemic, with event-based human intervention.

The numerical simulations are done using a semi-Lagrangian discretization scheme. At each grid point a characteristic is run backward for some fixed $\Delta t$, and the value of the solution is updated by using the nearest grid point to the foot of the characteristic. Although this method is quite crude, the simulations are much faster than the ones that use the definition based approach to computing the Frobenius-Perron operator (see baseline in \cite{hybrid_forms}). All simulations are performed in \texttt{matlab} on a MacBook Pro (10 cores and 32 GB memory). The code can be found at \texttt{https://github.com/Mary199810/Hybrid-transfer-operators.git}.

\subsection{The bouncing ball}
The canonical pedagogical example is the bouncing ball with elastic impacts. The equations of motion in the Hamiltonian framework are
\begin{equation*}
    \dot{x} = \frac{p}{m}, \quad
    \dot{p} = -mg,
\end{equation*}
impacts occur when the ball strikes the ground ($x = 0$), and the reset map is $\Delta(0, p) = (0, -p)$. Thus when the ball hits the ground, the sign momentum is reversed since no dissipation occurs during impact. The hybrid continuity equation, \eqref{eq:chikos}, is
\begin{align}\label{eq:BB}
    \begin{cases}
        \dfrac{\partial u}{\partial t} + \dfrac{p}{m}\dfrac{\partial u}{\partial x} - mg \dfrac{\partial u}{\partial p} = 0, & \t{for } x \ne 0; \\[2pt]
        u(t^+, 0, p) = u(t^-, 0, -p), &  \t{otherwise.} 
    \end{cases}
\end{align}

Consider an initial distribution $\rho(x, p) = e^{ - (x - 1)^2 - p^2}$ and let $m = 1$ and $g = 1$. The results of the simulations are shown in Figure \ref{fig:BB simulations}. The simulation time was \texttt{252.213084 seconds}.
\begin{figure}[!ht]
    \centering
     \begin{subfigure}[t]{0.45\textwidth}
         \centering
         \includegraphics[width=0.8\textwidth]{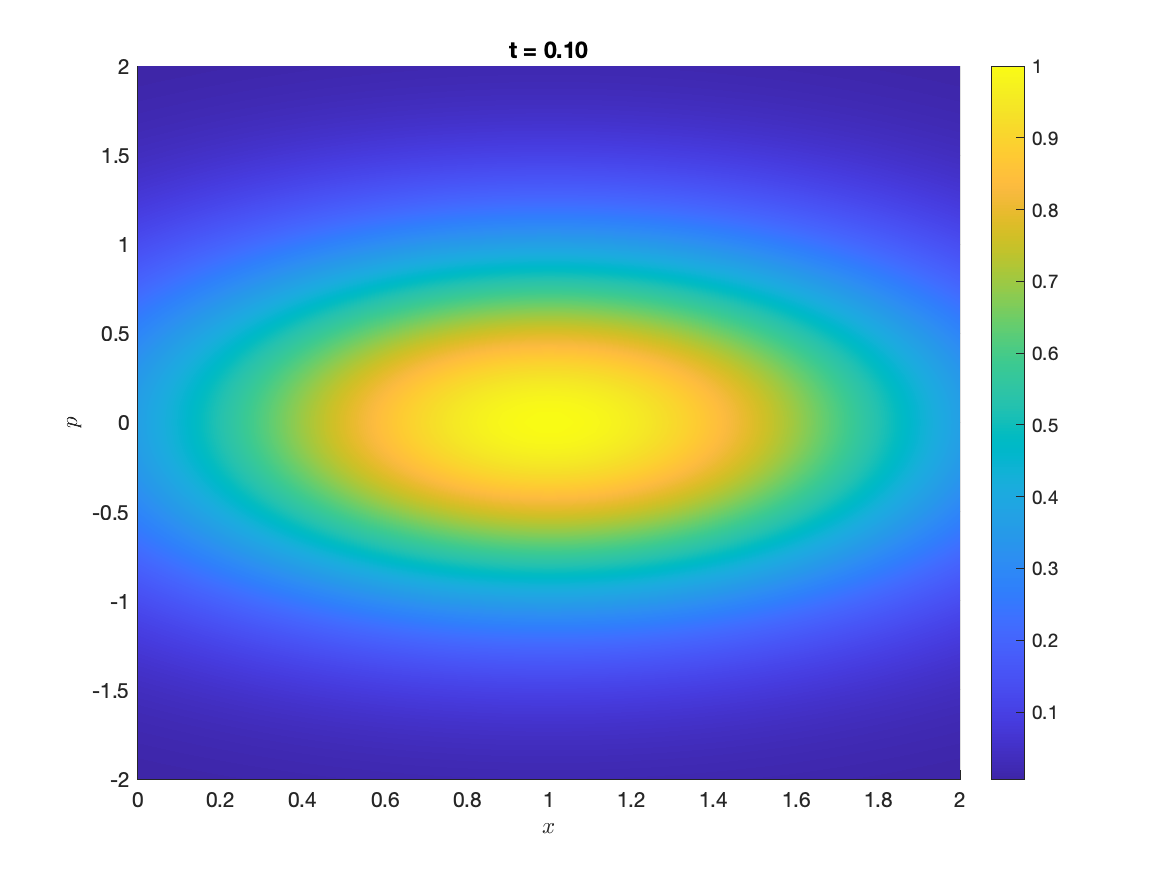}
         \caption{The initial density is centered around $(1, 0)$.}
         \label{subfig:1a}
     \end{subfigure}
     \hfill
     \begin{subfigure}[t]{0.45\textwidth}
         \centering
         \includegraphics[width=0.8\textwidth]{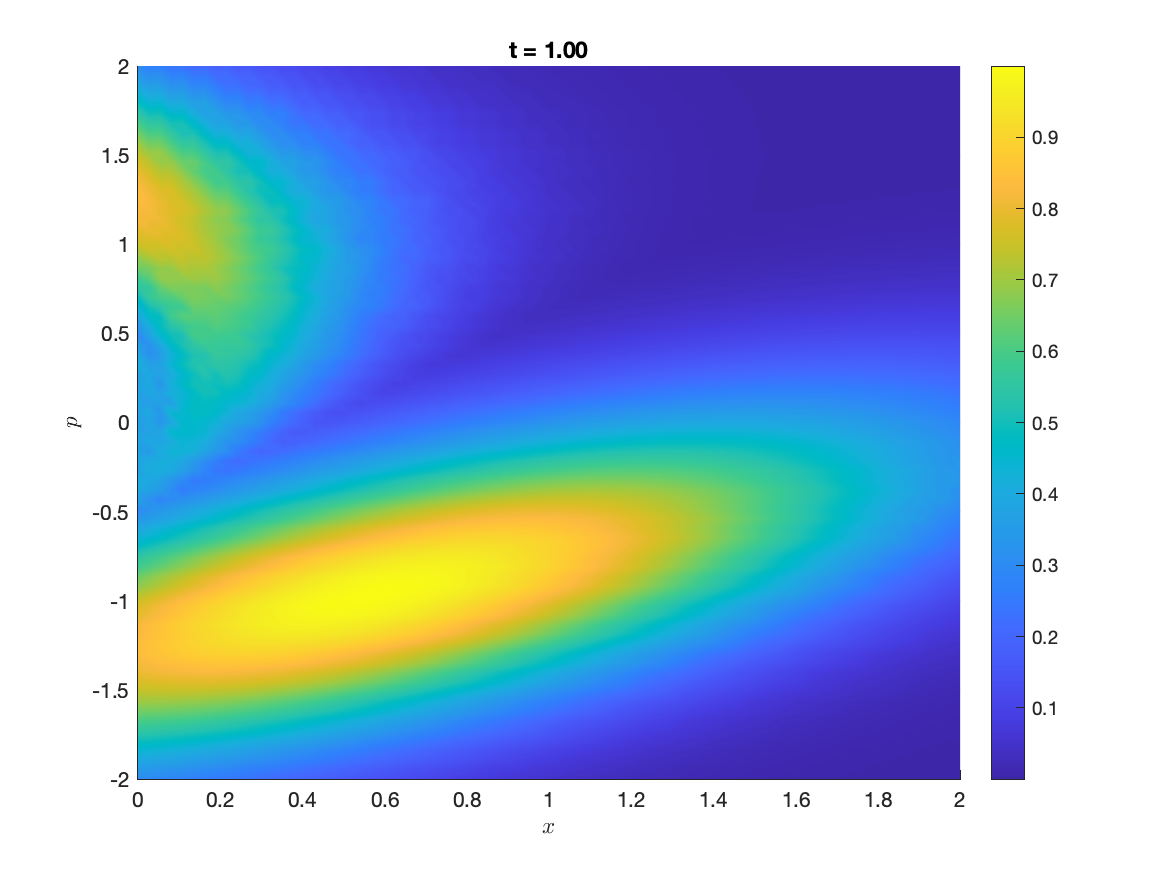}
         \caption{Immediately after the start of the simulation, at $t = 1$, the mass is attracted towards the impact surface $x = 0$.}
         \label{subfig:2a}
     \end{subfigure}
     \hfill
     
      \begin{subfigure}[t]{0.45\textwidth}
         \centering
         \includegraphics[width=0.8\textwidth]{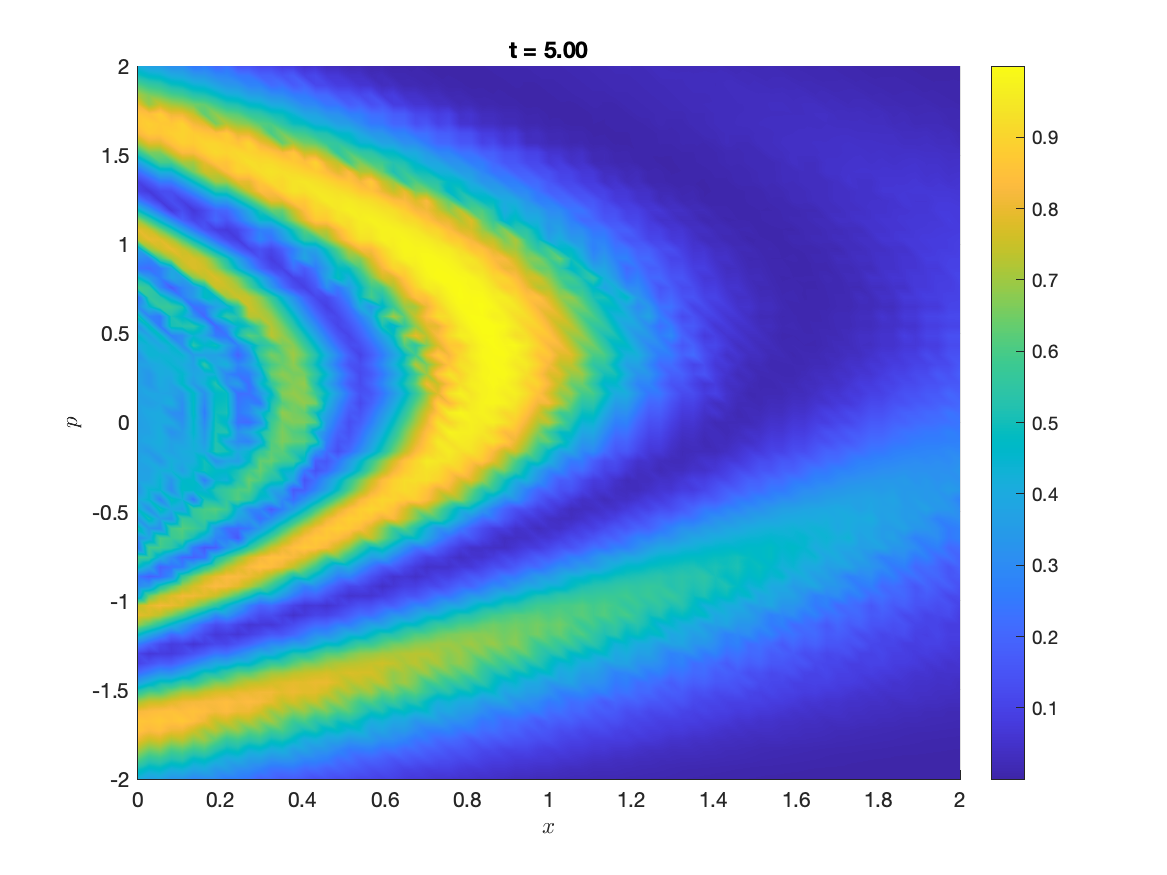}
         \caption{Around $t = 5$  the mass starts to concentrate in stripes around the Zeno point $(0, 0)$.}
         \label{subfig:3a}
     \end{subfigure}
     \hfill
      \begin{subfigure}[t]{0.45\textwidth}
         \centering
         \includegraphics[width=0.8\textwidth]{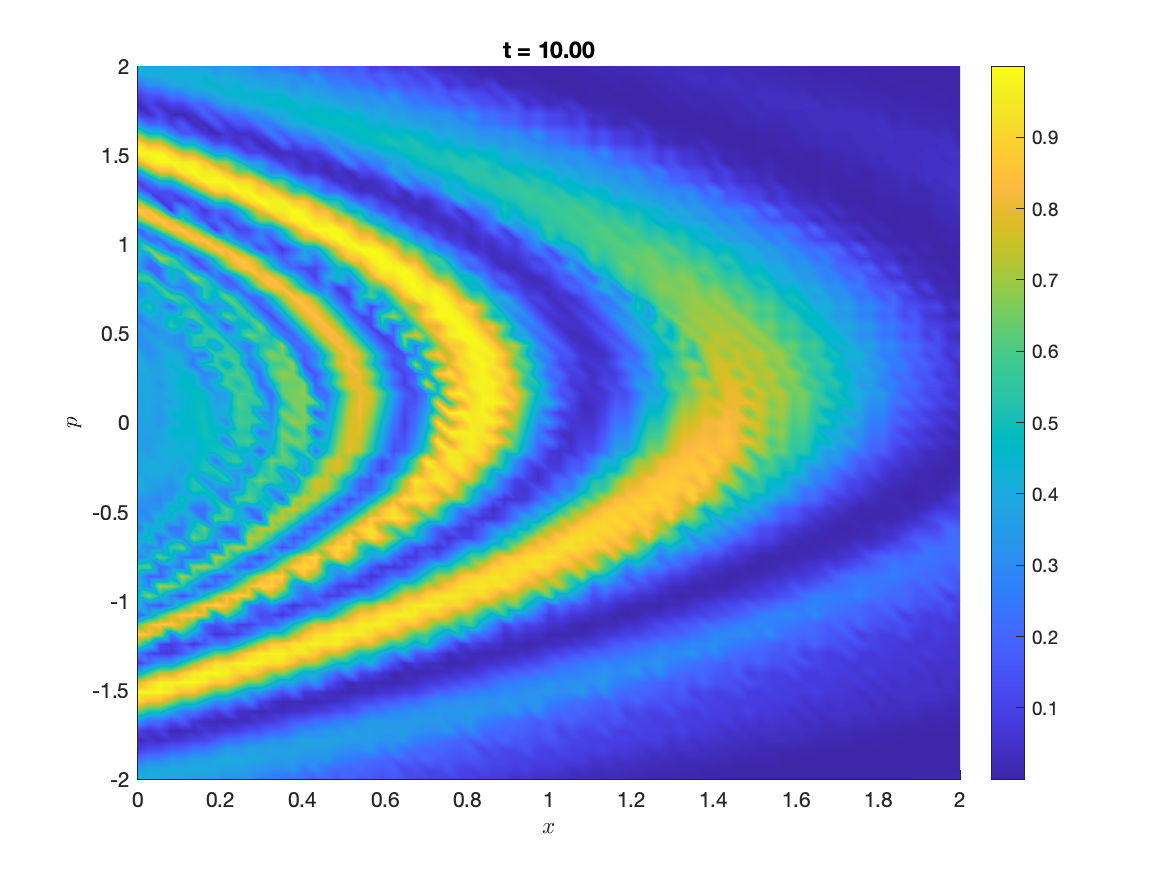}
         \caption{As time becomes large, this effect is stronger.}
         \label{subfig:4a}
     \end{subfigure}
     \hfill
    \caption{Snapshots of the evolution of an initial density under the hybrid transfer operator PDE \eqref{eq:BB} corresponding the the elastic bouncing ball.}
    \label{fig:BB simulations}
\end{figure}

\label{subsec:BB}
    \subsection{The Chaplygin sleigh with angle impacts}\label{subsec:chapsleigh_angle}

The Chaplygin sleigh was introduced in the work of Chaplygin \cite{chaplygin}, and is one of the best studied examples of nonholonomic systems. It describes the motion of a rigid body in the plane supported on a knife edge. The body can move freely without friction in the direction parallel to the edge, and it can rotate around it, but it cannot move perpendicularly. The Chaplygin sleigh model is useful for describing the motion of a car, in particular when discussing parallel parking \cite{car_chaplygin}, for motion planning problems of skaters in the plane \cite{rhodes_skating}, and for more complex robotic systems with wheels \cite{roller_racer}. The natural state space is $\t{SE}_2$ with coordinates $(x, y, \theta)$ where $x$ and $y$ are Cartesian coordinates that represent the position of the sleigh within the plane and $\theta$ is the sleigh's orientation. The Lie algebra is composed of matrices of the form $A$ as stated in \eqref{eqn:ref A}.
For this system, the Hamiltonian is $H = \frac{1}{2}p^TM^{-1}p$, where $p = (p_x, p_y, p_\theta)$ and 
\begin{align}\label{eqn:ref A}
    A = \begin{pmatrix}
    0 & \dot\theta & \dot x \\
    -\dot\theta & 0 & \dot y \\
    0 & 0 & 0
    \end{pmatrix} \qquad \qquad \qquad
    M = \begin{pmatrix}
    m & 0 & -ma\sin\theta \\
    0 & m & ma\cos\theta \\
	-ma\sin\theta & ma\cos\theta & I+ma^2
\end{pmatrix}.
\end{align}
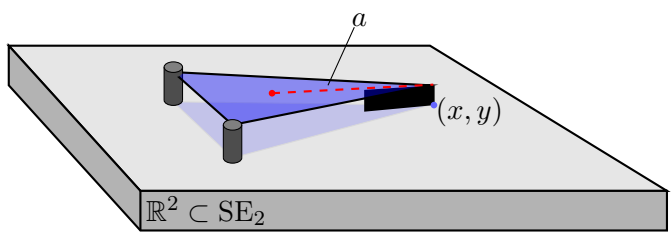
\begin{figure}[ht!]
    \centering
    \begin{tikzpicture}[x=1.75cm, y=1.75cm]
    \newcommand{\viewa}{-30}
    \newcommand{\viewb}{30}
    
    \draw[thick, fill=black!10!white] (1.8,-.4) -- (5,-.4) -- (6.8,-1.5) -- (2.8,-1.5) -- cycle;
    
    \draw[thick, fill = black!30] (1.8,-.7) -- (2.8,-1.8) -- (6.8,-1.8) -- (6.8,-1.5) -- (2.8,-1.5) -- (1.8,-.4) --cycle;
    \draw[thick](2.8,-1.5) -- (2.8,-1.8);
    
    \node(r0) at (3.5,  -1) {}; 
    \node(s0) at (3.05, -0.6) {}; 
    \node(s1) at (5, -0.7) {}; 
    
    \draw[draw=gray, fill=blue, opacity=0.15](3.5, -1.25) -- (3.05, -0.825) -- (5, -0.85) -- cycle;
    
    \fill (4.5,-0.9) -- (5.035,-0.85) -- (5.035,-0.7) -- (4.5,-0.735) -- cycle;

    \fill[fill=black!70] (2.98,-0.5625) -- (2.98,-0.8125) arc (180:360:0.07 and 0.04) -- (3.12,-0.5625) arc (0:180:0.07 and -0.04);
    \draw (3.12,-0.8125) -- (3.12,-0.608);
    \filldraw[thick, fill=blue, fill opacity=0.4] (r0.center)--(s0.center)--(s1.center) -- cycle;
    \fill [fill=black!70] (3.05, -0.6) ellipse (0.07 and 0.04);
    \draw (3.12,-0.61) -- (3.12,-0.5625);
    
    \draw [black,fill=black!50] (3.05, -0.5625) ellipse (0.07 and 0.04);
    \draw (2.98,-0.8125) arc (180:360:0.07 and 0.04);

    \draw (2.98,-0.5625) -- (2.98,-0.8125);

    \fill[fill=black!70] (3.43,0) -- (3.43,-1.25) arc (180:360:0.07 and 0.04) -- (3.57,-1) arc (0:180:0.07 and -0.04);
    \draw[black,fill=black!50] ( 3.5,  -1) ellipse (0.07 and 0.04);
    \draw(3.43,-1) -- (3.43,-1.25);
    \draw(3.43,-1.25) arc (180:360:0.07 and 0.04);
    \draw(3.57,-1.25) -- (3.57,-1);
    
    \draw[red, dashed, thick](3.8,-.76) -- (5.025, -0.695);
    \fill[red] (3.8,-.76) circle (1.25pt);
    \draw (4.6,-.2) node [anchor = east] {$a$};
    \draw (4.425,-.26) -- (4.225,-.7);

    \draw[white](3,-2) -- (3.1, -2);

    \fill[fill = blue!60] (5.03,-0.85) circle (1.25pt);
    \draw (5.3,-.9) node [anchor = center] {$(x,y)$};

    \draw (3.3,-1.65) node [anchor = center] {$\mathbb{R}^2 \subset \text{SE}_2$};

\end{tikzpicture}
    \caption{Schematic drawing of the Chaplygin sleigh, where $a$ is the distance from the center of mass to the knife edge.}
    \label{fig:chaplygin}
\end{figure}
\!\!The Hamiltonian is left invariant, i.e., $H(g, p) = H(e, g^{-1}p)$ for any $g \in \t{SE}_2$, where $e$ is the identity matrix in $\t{SE}_2$. It is easier to work in the Lagrangian formulation with $L = \frac{1}{2}\dot{g}M \dot{g}$. Then left translation to identity is 
\begin{align*}
    \zeta = g^{-1}\dot{g} = 
    \begin{pmatrix}
        \dot{x}\cos\theta + \dot{y}\sin\theta \\
        -\dot{x}\sin\theta + \dot{y}\cos\theta \\
        \dot\theta
    \end{pmatrix}
\end{align*}
The first component is the velocity parallel to the edge, the second is the perpendicular velocity and the third is the angular velocity of the sleigh. 
The dimension can further be reduced by taking the nonholonomic constraint $ -\dot{x}\sin\theta + \dot{y}\cos\theta = 0$ into consideration. This is exactly the second component of $\zeta$. Let $v = \dot{x}\cos\theta + \dot{y}\sin\theta$ and $\omega = \dot\theta$. Then the equations of motion can be written in the $(v, \omega)$ coordinates as
\begin{equation*}
    \begin{cases}
        \dot v = a\omega^2\\
        \dot \omega = -\dfrac{ma}{I + ma^2}v\omega
    \end{cases}
\end{equation*}

Now assume that the impacts are angle dependent, i.e. $\Sigma = \{(x, y, \theta) \in \t{SE}_2 : \theta \in \{ \pm\theta_0\}\} 
$ which is a right coset of $\R^2$, who is in turn a codimension $1$ subgroup of $\t{SE}_2$. In the example of a simplified vehicle, this is equivalent to having the wheel turn suddenly after reaching a critical angle. The reset map in the reduced coordinates is:
\begin{equation*}
    \begin{cases}
        v^+  = v^-\\
        \omega^+  = -\omega^-
    \end{cases}
\end{equation*}

Apart from $v$ and $\omega$ we need another variable to determine where impacts happen. This is the role of $q$ in Theorem \ref{thm:Impact_Reduction}.  $\R\backslash \t{SE}_2\approx \mathbb{S}^1$ so $q = \theta \in \mathbb{S}^1$. For the local section $\sigma(\theta) = (0, 0, \theta)$, the equations of motion \eqref{eq:LP cont} are equivalent to $\dot\theta = \omega$ and the reset map for $\theta$ is identity, so $\theta^+ = \theta^-$. Putting everything together, the reduced hybrid dynamics are given by\\
\begin{center}
    \begin{tabular}{lcl}
        Continuous: & \hspace{5mm} & Discrete: \\[1ex]
        $\ds \dot v = a\omega^2$ & & $\ds v^+ = v^-$ \\[1ex]
        $\ds \dot\omega = -\frac{ma}{I + ma^2}v\omega$ & & $\ds \omega^+ = -\omega^-$ \\[2ex]
        $\ds \dot \theta = \omega $ & & $\ds \theta^+ = \theta^-$. \\[1ex]
    \end{tabular}
\end{center}
Plugging this vector field into \eqref{eq:chikos} and using the usual measure on $\t{SE}_2$: $\mu = dv \wedge d\omega \wedge d\theta$, we obtain the PDE for the transfer operator
\begin{align*}
    & \frac{\partial u}{\partial t} -\frac{\partial u}{\partial v} a\omega^2 - \frac{ma}{I + ma^2} v\omega \frac{\partial v}{\partial \omega } + \omega\frac{\partial v}{\partial \theta} - \frac{ma}{I + ma^2}vu = 0 & \text{if } \theta \neq \theta_0; \\[2pt]
    &  u(t^+, v, -\omega, \theta) = u(t^-, v, \omega, \theta) & \text{if } \theta = \theta_0.
\end{align*}
The dynamics of the Chaplygin sleigh can be further reduced by using the conservation of energy propriety and restricting to an energy surface. Let $E$ be the constant energy of the  sleigh, which can be written in terms of $v$ and $\omega$ as
\begin{equation*}
    E = \frac{1}{2}mv^2 + \frac{1}{2}(I + ma^2)\omega^2,
\end{equation*}
We can express $\omega$ in terms of $v$ as $\omega = \pm\sqrt{C_1 - C_2\omega^2}$ where $C_1 = 2E/( I +ma^2)$ and  $C_2 = m/(I + ma^2)$. There are two branches of the solution to $\omega^2 = C_1 - C_2v^2$. When an impact happens, the branch is changed, leading us to the 2 dimensional system
\begin{align*}
    \begin{cases}
        \dot v = aC_1 - aC_2v^2\\
        \dot\theta = \pm \sqrt{C_1 - C_2v^2} = X(v)
    \end{cases} \t{if } \theta \neq \theta_0,
    \qquad \t{with} \qquad
    \begin{cases}
        v^+ =  v^-\\
        \theta^+ = \theta^-
    \end{cases} \t{ if }\theta = \theta_0;
\end{align*}
  This is a Filippov system where the vector field changes as we cross the lines $\theta = \pm \theta_0$. The state space looks like two copies of $\R\times\mathbb{S}^1$ glued along the lines $\theta = \pm\theta_0$ as in Figure \ref{fig:reduced chaplygin}.
  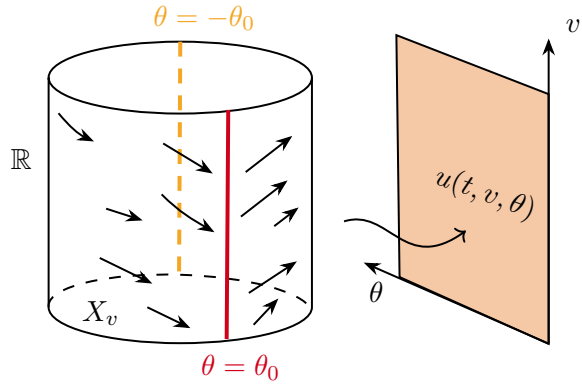
\begin{figure}
      \centering
      \tikzset{every picture/.style={line width=0.75pt}} 

\begin{tikzpicture}[x=0.75pt,y=0.75pt,yscale=-1,xscale=1, scale = 0.8]

\draw (256.76,51.71) -- (256.76,196.08) .. controls (256.76,208.56) and (219.6,218.68) .. (173.76,218.68) .. controls (127.92,218.68) and (90.76,208.56) .. (90.76,196.08) -- (90.76,51.71) .. controls (90.76,39.22) and (127.92,29.1) .. (173.76,29.1) .. controls (219.6,29.1) and (256.76,39.22) .. (256.76,51.71) .. controls (256.76,64.19) and (219.6,74.31) .. (173.76,74.31) .. controls (127.92,74.31) and (90.76,64.19) .. (90.76,51.71);
\draw [draw opacity=0][dash pattern={on 4.5pt off 4.5pt}] (90.76,196.08) .. controls (90.72,195.74) and (90.7,195.4) .. (90.71,195.06) .. controls (91.1,182.89) and (128.61,174.21) .. (174.49,175.69) .. controls (216.16,177.02) and (250.4,186.27) .. (256.04,197.08) -- (173.78,197.73) -- cycle; 
\draw [dash pattern={on 4.5pt off 4.5pt}] (90.76,196.08) .. controls (90.72,195.74) and (90.7,195.4) .. (90.71,195.06) .. controls (91.1,182.89) and (128.61,174.21) .. (174.49,175.69) .. controls (216.16,177.02) and (250.4,186.27) .. (256.04,197.08); 
\draw [color={rgb, 255:red, 208; green, 2; blue, 27 } ,draw opacity=1 ][line width=1.5] (204,72) -- (203,216.1);
\draw [color={rgb, 255:red, 245; green, 166; blue, 35 } ,draw opacity=1 ][line width=1.5] [dash pattern={on 5.63pt off 4.5pt}] (173.76,29.1) -- (173,175.1);

\draw [-Stealth] (97,74.1) to[out = 50,in = 205] (119.43,91.86);%
\draw[-Stealth] (162,125.1) to[out = 45,in = 210] (195.37,148.94);
\draw [-Stealth] (127,134.1) -- (150.1,141.49);
\draw [-Stealth] (123,165.1) -- (156.2,181.23);
\draw [-Stealth] (153,196.1) -- (180.2,209.23);
\draw [-Stealth] (163,93.1) -- (194.29,112.06);

\draw [-Stealth] (215,111.1) -- (244.42,88.32);
\draw[-Stealth] (212,139.1) -- (241.42,116.32);
\draw [-Stealth] (233,145.1) -- (248.41,133.31);
\draw[-Stealth] (217,187.1) -- (247.33,167.2);
\draw[-Stealth] (220,207.1) -- (235.63,190.55);

\filldraw[fill = roseorange, fill opacity = .4] (312,177.1) -- (310,25) -- (406,62.1) -- (406,219.1) -- cycle;
\draw[-Stealth] (406,219) -- (406,27.1);
\draw[-Stealth] (406,219) -- (289.33,168);

\draw[->] (277,142.1) .. controls (305.71,133.19) and (314.82,176.23) .. (353.81,147.98);

\draw (65,95) node [anchor=north west][inner sep=0.75pt] {$\mathbb{R}$};
\draw (185,224.4) node [anchor=north west][inner sep=0.75pt] {$\textcolor[rgb]{0.82,0.01,0.11}{{\theta = \theta_0}}$};
\draw (155,4.4) node [anchor=north west][inner sep=0.75pt] {$\textcolor[rgb]{0.96,0.65,0.14}{{\theta = -\theta_0}}$};
\draw (337.03,102.56) node [anchor=north west][inner sep=0.75pt] [rotate=-20] {$u(t, v, \theta )$};
\draw (291,178.4) node [anchor=north west][inner sep=0.75pt] {$\theta $};
\draw (415,13.4) node [anchor=north west][inner sep=0.75pt] {$v$};
\draw (110,190.4) node [anchor=north west][inner sep=0.75pt] {$X_v$};
\end{tikzpicture}
      \caption{State space for the Chaplygin sleigh restricted to an energy surface. The two halves of the cylinder are stacked on top of each other when displaying $u(t, v, \theta)$, the solution to the Hybrid transfer PDE \eqref{eq:chikos}.}
      \label{fig:reduced chaplygin}
  \end{figure}

Moreover, the PDE for the Frobenius-Perron operator is
\begin{align*}
    &\frac{\partial u}{\partial t} + a(C_1 - C_2v^2)\frac{\partial u}{\partial v} + X(v)\frac{\partial u}{\partial \theta} + 2aC_2vu = 0\\
   & u(t^+, \omega, \pm\theta_0) = u(t^-, -\omega, \pm\theta_0)
\end{align*}
In the plots below (Figure \ref{fig:chaplygin simulations}), the two copies of $\R\times\mathbb{S}^1$ are identified and the value of the solution is the sum of the value on each copy. 

\begin{figure}
    \centering
     \begin{subfigure}[t]{0.48\textwidth}
         \centering
         \includegraphics[width=0.8\textwidth]{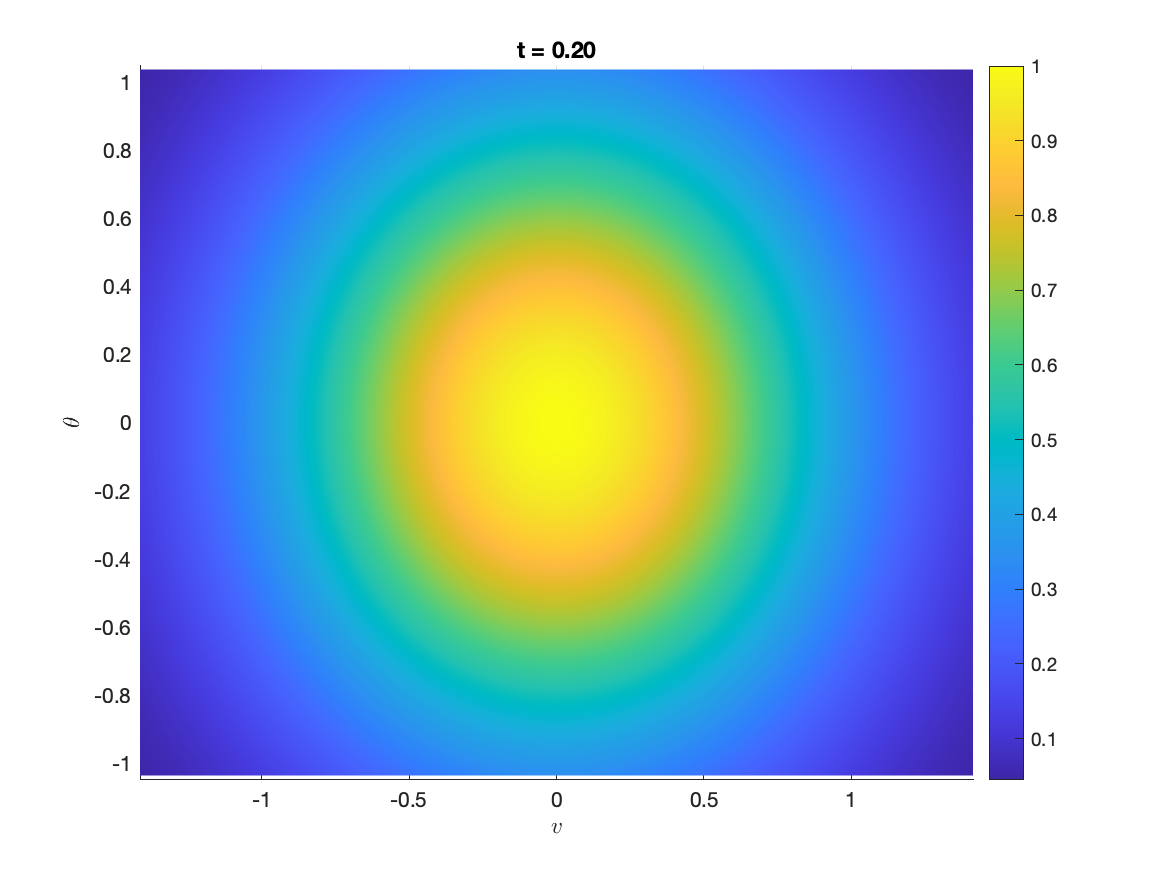}
         \caption{The initial density is $\rho(v, \theta) = e^{-v^2 - \theta^2}$}
         \label{subfig:1}
     \end{subfigure}
     \hfill
      \begin{subfigure}[t]{0.48\textwidth}
         \centering
         \includegraphics[width=0.8\textwidth]{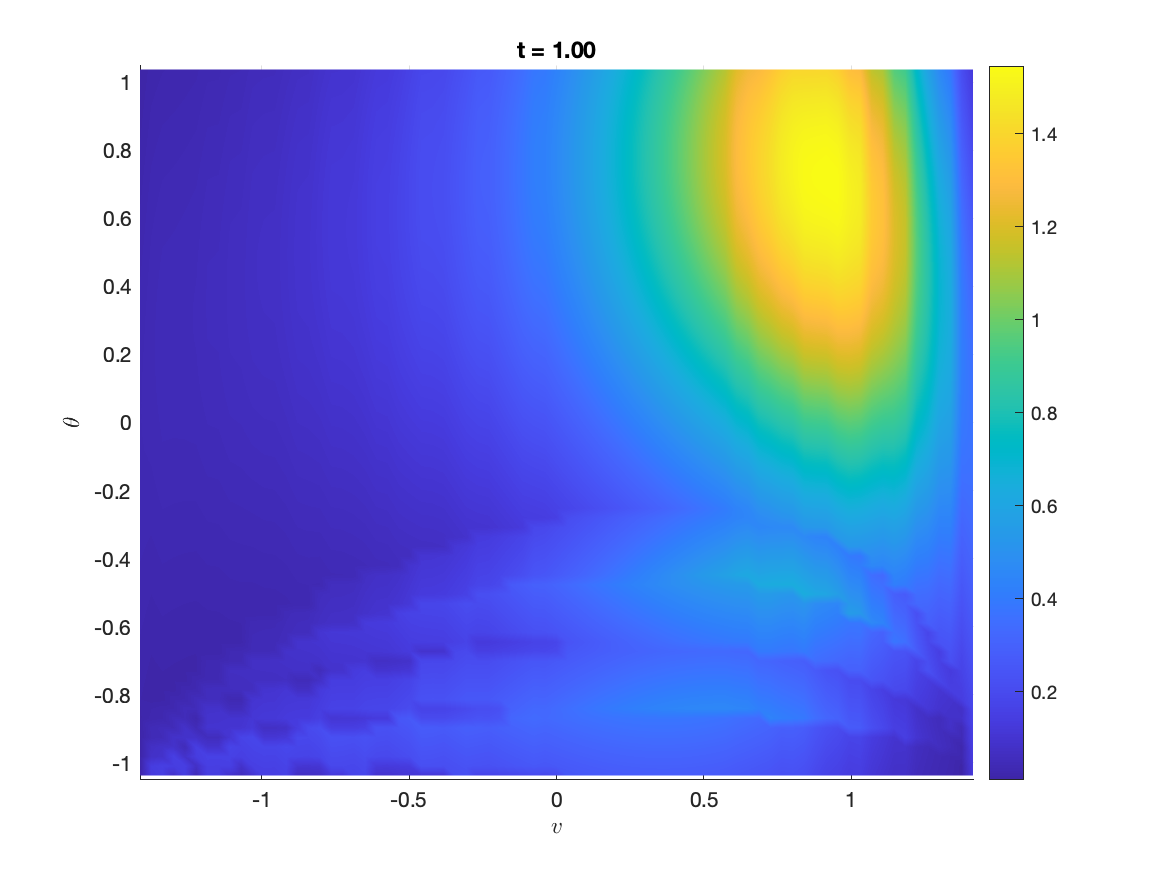}
         \caption{Immediately, the mass is pulled towards the right side, which corresponds to the stable fixed point $v^* = \sqrt{C_1/C_2}$.}
         \label{subfig:2}
     \end{subfigure}
     \hfill
      \begin{subfigure}[t]{0.48\textwidth}
         \centering
         \includegraphics[width=0.8\textwidth]{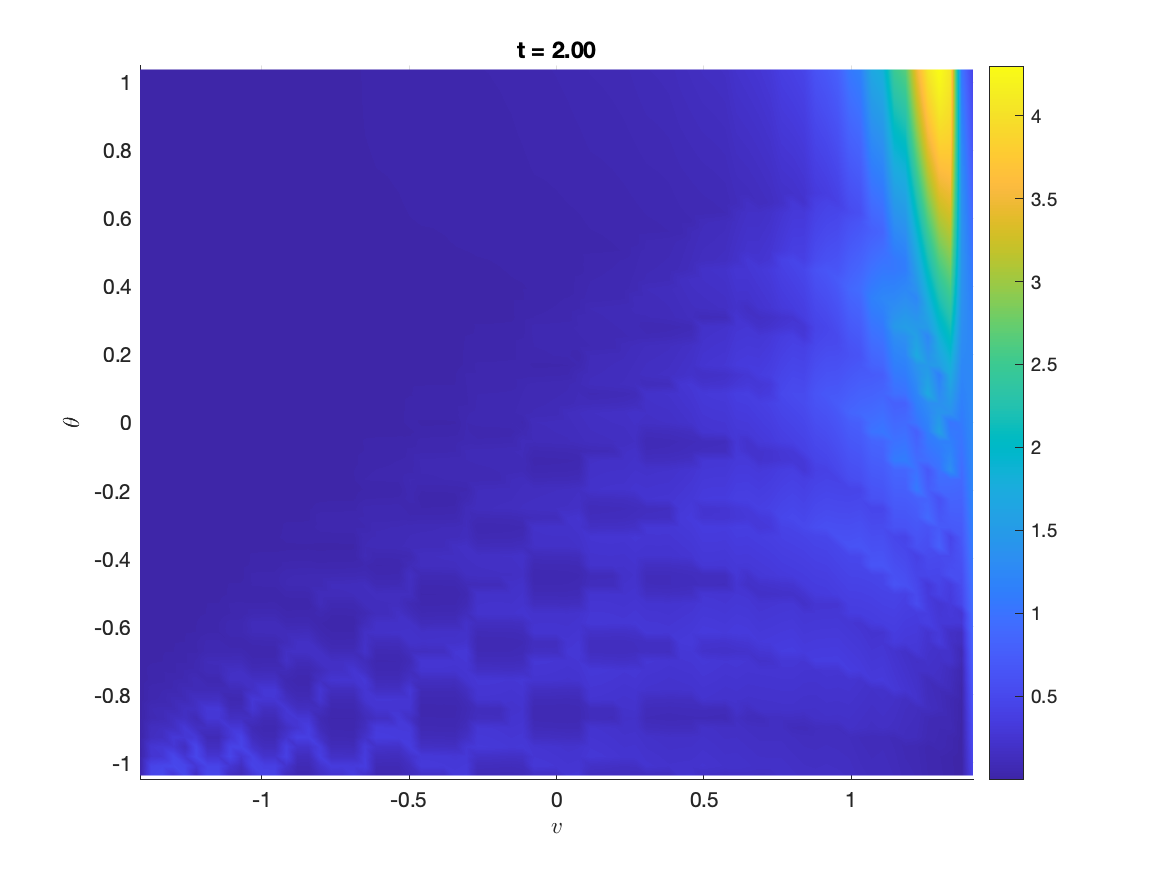}
         \caption{A sharp peak begins to form in the upper right corner. After an impact, $\theta$ rapidly decreases so some of the mass gathers around lower $\theta$ values.}
         \label{fig:y equals x}
     \end{subfigure}
     \hfill
      \begin{subfigure}[t]{0.48\textwidth}
         \centering
         \includegraphics[width=0.8\textwidth]{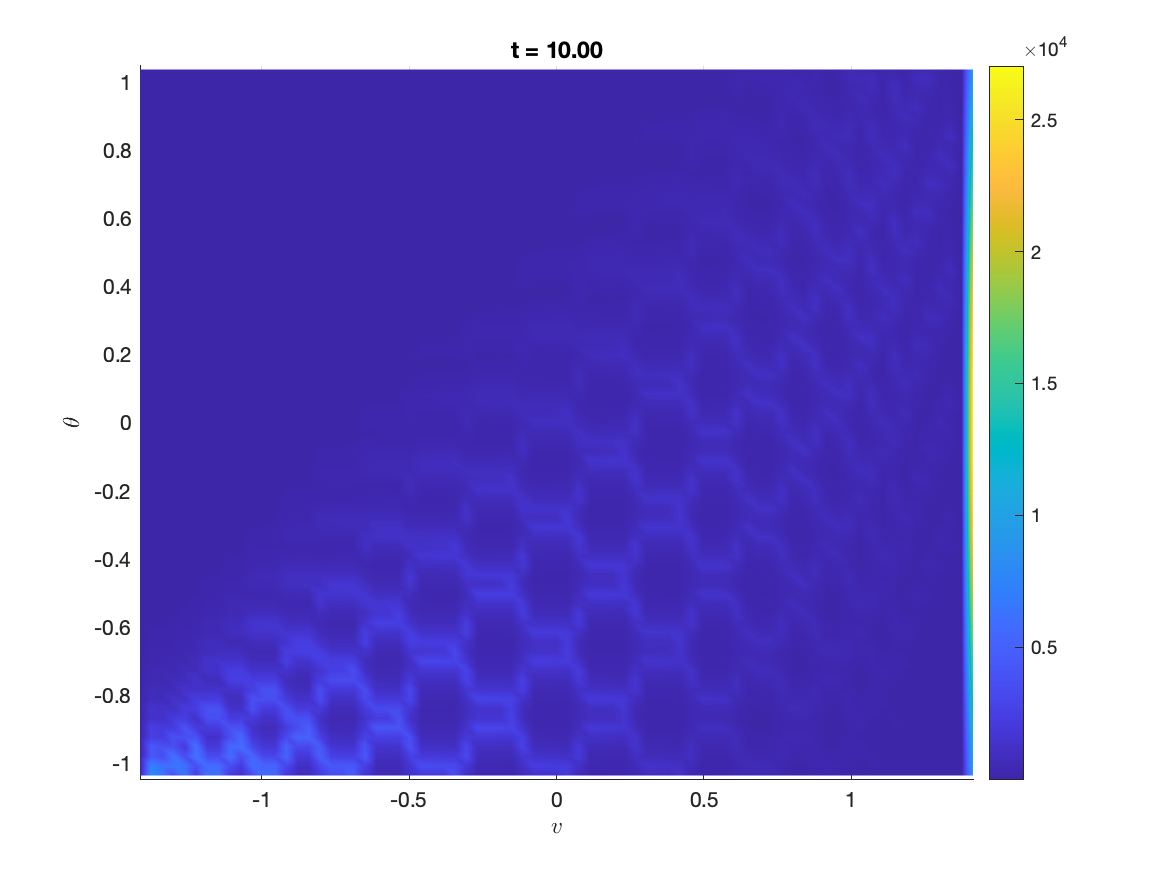}
         \caption{After a longer time, a delta-like peak forms in the upper right corner.}
         \label{subfig:4}
     \end{subfigure}
    \caption{Evolution of an initial density under the reduced hybrid Chaplygin sleigh equations of motion.}
    \label{fig:chaplygin simulations}
\end{figure}


    \subsection{Matrix groups}\label{subsec:matrix_groups}

Consider the case where the configuration space is $Q = G = \t{GL}_n(\mathbb{R}) $, with impact surface $\Sigma = \t{SL}_n(\mathbb{R})$. The Lie algebra of $G$ is $\mf{gl}_n(\mathbb{R}) \cong \mathbb{R}^{n \times n}$ with Lie bracket $[A, B] = BA - AB$ 
and structure coefficients $C_{ij}^k$ for $i, j, k \in \{1, \dots, n^2\}$. The dual of the Lie algebra consists of matrices $p \in \mathbb{R}^{n \times n}$ with the dual pairing given by $\langle p, \xi \rangle = \sum_{ij}p_{ij}\xi_{ij}$. The pullback of left translation on dual vectors can be written in terms of matrix multiplication as $\ell_A^* p = A^T p$.  

Let $H(A, P) = \frac{1}{2}\tr(A^TPP^TA)$. This Hamiltonian is left-invariant as  $H(I_n, A^TP) = H(A, P)$. Throughout this section $I_n$ denotes the identity in $\t{GL}_n(\R)$. Let $\zeta := A^TP\in\mf{gl}_n^*$ and denote by $h \colon \mf{gl}_n^* \to \R$ the Hamiltonian restricted to the dual of the Lie algebra; in coordinates  $h(\zeta) = \frac{1}{2}\tr(\zeta\zeta^T)$. To determine the continuous component of the reduced motion, we view $\zeta\in\mf{gl}_n^*$ as a row vector in $\mathbb{R}^{n^2}$ and $X,Y\in\mf{gl}_n$ as column vectors in $\R^{n^2}$.

The coadjoint representation can be written in terms of the structure coefficients as
\begin{equation*}
        \ad^*_X(\zeta)(Y)  = \zeta([X, Y]) = C_{ij}^kX^iY^j\zeta_k \implies 
        (\ad^*_X(\zeta))_j = C_{ij}^k X^i \zeta_k
\end{equation*}
Therefore the continuous equations of motion are $\dot{\zeta}_i = C_{ij}^k (\zeta^T)^i \zeta_k $.

For the discrete jumps, let $\Sigma = \t{SL}_n(\R)$ be the impact surface, and note that $\t{SL}_n(\R) = s^{-1}(0)$, where $s(A) = \det(A) - 1$. Let $\adj(A)$ denote the coadjoint of $A$, so $ds|_A = \adj(A)^T$. Then, since $\Sigma = \t{SL}_n(\R)\cdot I_n $ and $g_0 = I_n$, the formula \eqref{eq:Delta_zeta} for this case gives
\begin{equation*}
    \Delta \zeta = -\frac{2}{n}\tr(\zeta)I_n.
\end{equation*}

\begin{remark}
    Let $f(\zeta) = \t{tr}(\zeta)$. Then $df = I_n d\zeta$ which commutes with any $dh$ for any Hamiltonian $h$. Thus, the trace is a Casimir for the bracket on $\mathfrak{gl}_n^*$. Moreover, across impacts $\t{tr}(\zeta^+) = -\t{tr}(\zeta^-)$.
\end{remark}
 The extra variable that keeps track of whether impact occur is $q = \det(A) - 1$. Its equations of motion are given by \eqref{eq:ILP_cont}
\begin{align*}
    \begin{cases}
        \dot{q} = (q + 1)\t{tr}(\zeta)\\
        q^+ = q^- 
    \end{cases}
\end{align*}
The $q$ variable system can also be solved exactly, using the fact that $\tr(\zeta) = C$ is always constant; we obtain $q(t) = q_0e^{Ct} -1$. 

Therefore, to determine when impacts occur we need to keep track of two scalars: the trace of the lie algebra element, and the determinant of the group element. Independently on the dimension of the configuration space, impacts are completely determined by
\begin{gather}\label{eq:2Dmatrixgroups}
    \begin{cases}
        q(t) = q_0e^{Ct} - 1 \\
        C(t) = C
    \end{cases}
    \hspace{.5cm}\t{if }q \neq 0; \hspace{3cm}\begin{cases}
        q^+ = q^-\\
        C^+ = -C^- 
    \end{cases} \hspace{.5cm}\t{if } q = 0.
\end{gather}
No matter the initial $\tr(\zeta)$ and $\det(A)$, there is at most one impact on any trajectory (see Figure \ref{fig:matrix_groups}).
\begin{figure}
    \centering
    \includegraphics[scale = 0.15]{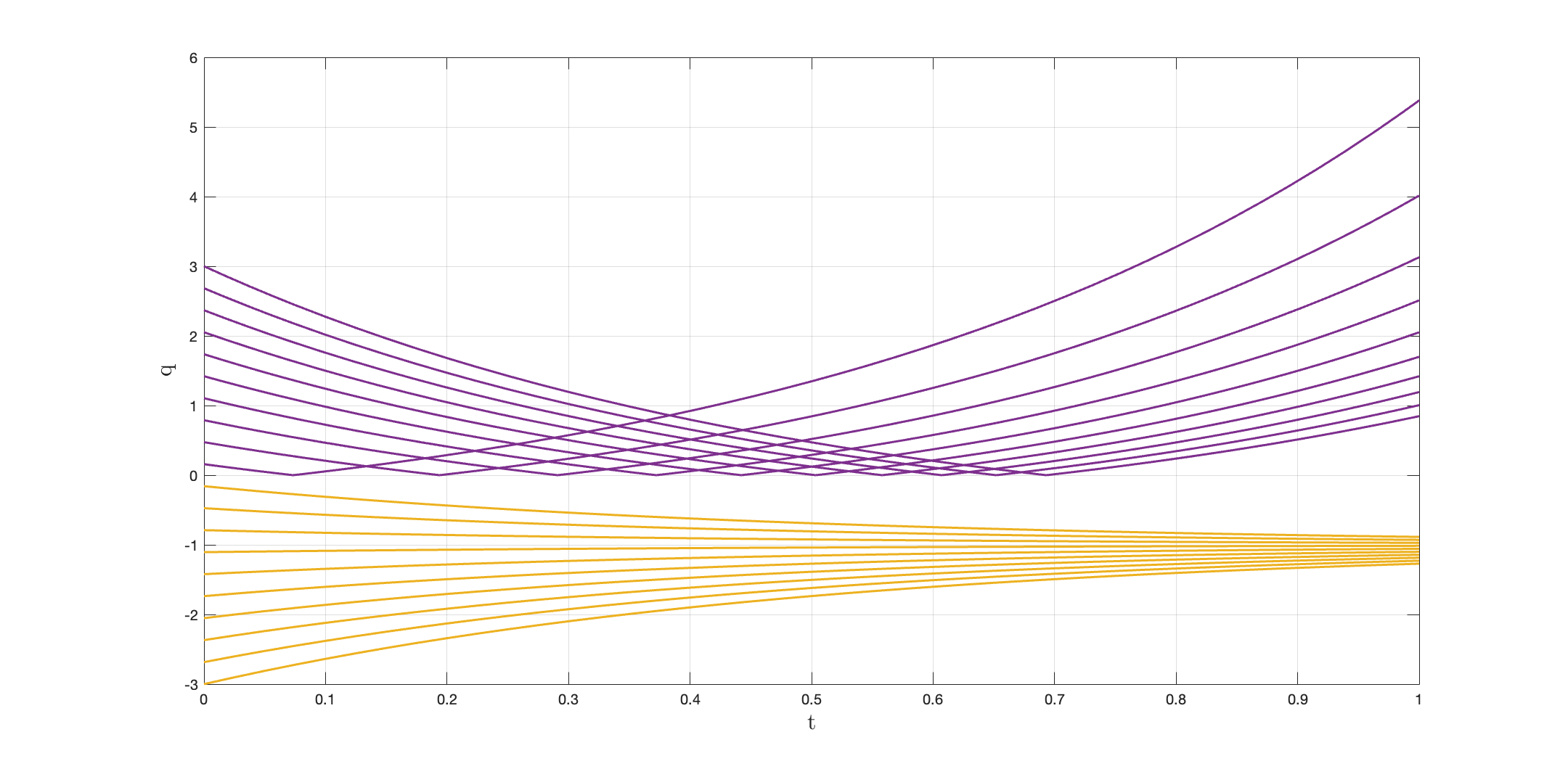}
    \caption{Sample trajectories of the 2-dimensional system \eqref{eq:2Dmatrixgroups} which determines exactly when impacts happen for $n$-dimensional Lie-Poisson impact system with $G = \t{GL}_n$, and $\Sigma = \t{SL}_n$. Initially, $\tr(\zeta) = -2$, and $q = \t{det}(A) - 1$ is sampled uniformly from $[-3, 3]$.}
    \label{fig:matrix_groups}
\end{figure}

Let $n = 2$, and denote the components of $dh$ by $(\alpha^1 \ \alpha^2 \ \alpha^3 \ \alpha^4)$. Then $\ad_{dh}^*$ is a $4 \times 4$ matrix depending on $dh$
\begin{align*}
    \ad_{dh}^* 
    = \begin{pmatrix}
        0 & -\alpha^3 & \alpha^2 & 0 \\
        -\alpha^2 & \alpha^1 - \alpha^4 & 0 & \alpha^2 \\
        \alpha^3 & 0 & \alpha^4 - \alpha^1 & -\alpha^3 \\
        0 & \alpha^3 & -\alpha^2 & 0  
    \end{pmatrix}
\end{align*}

For $h = \frac{1}{2}\tr(\zeta\zeta^T)$, $dh = \sum_i\zeta_\Id\zeta_i$ and the equations of motion are
\begin{equation}\label{eq:cont_eom_matrix_groups}
\begin{split}
    \begin{pmatrix}
        \dot{\zeta}_1 & \dot{\zeta}_2 & \dot{\zeta}_3 & \dot{\zeta}_4
    \end{pmatrix} = &
    \begin{pmatrix}
        \zeta_1 & \zeta_2 & \zeta_3 & \zeta_4
    \end{pmatrix}\begin{pmatrix}
        0 & -\zeta_3 & \zeta_2 & 0 \\
        -\zeta_2 & \zeta_1-\zeta_4 & 0 & \zeta_2 \\
        \zeta_3 & 0 & \zeta_4-\zeta_1 & -\zeta_3 \\
        0 & \zeta_3 & -\zeta_2 & 0
    \end{pmatrix} \\
     = & \begin{pmatrix}
         - \zeta_2^2 + \zeta_3^2 & (\zeta_1 - \zeta_4)(\zeta_2 - \zeta_3) & (\zeta_1 - \zeta_4)(\zeta_2 - \zeta_3) & \zeta_2^2 - \zeta_3^2
     \end{pmatrix}
    \end{split}
\end{equation}
and discrete transitions follow the rule $ \zeta^+ =
    \begin{pmatrix}
        -\zeta_4^- & \zeta_2^- & \zeta_3^- & -\zeta_1
    \end{pmatrix}.
$

The system \eqref{eq:cont_eom_matrix_groups} can be solved exactly! Let $\zeta_1 + \zeta_4 = \t{tr}(\zeta) = C$ and $\zeta_2 - \zeta_3 = D$. Then, $\dot{C} = \dot{D} = 0$ so $C$ and $D$ are constants. Plugging this back into \eqref{eq:cont_eom_matrix_groups}, the equations of motion can be reduced to a 2-dimensional linear system

which can be solved analytically to give the following solution to \eqref{eq:cont_eom_matrix_groups}:
\begin{align*}
    \zeta_1 &=  D^2t + \zeta_1^0\cos(2Dt) + \zeta_3^0\sin(2Dt),
    \hspace{1.91cm} \zeta_3 =  - CDt + \zeta_1^0\sin(2Dt) + \zeta_3^0\cos(2Dt), \\
    \zeta_2 & =D -  CDt +\zeta_1^0\sin(2Dt) + \zeta_3^0\cos(2Dt),
    \hspace{1cm} 
    \zeta_4 = C -  D^2t - \zeta_1^0\cos(2Dt) - \zeta_3^0\sin(2Dt),
\end{align*}
where $D = \zeta_3^0 - \zeta_2^0$ and $C = \zeta_1^0 + \zeta_4^0$. 

Now, consider the problem of finding the flow of some density under the dynamics. For this we need to find \eqref{eq:chikos} for $\t{GL}_2(\R)$. Given the measure $\mu = d\zeta_1 \wedge d\zeta_2 \wedge d\zeta_3 \wedge d\zeta_4\wedge dq$, we obtain that the hybrid Jacobian is $\mathcal{J}_\mu^X = -1$ and the divergence $\t{div}_\mu(X) = \t{tr}(\zeta) = C$. Hence, \eqref{eq:chikos} becomes
\begin{gather*}
    \begin{cases}
        \begin{split}
            \partial_t u + (2D\zeta_3 + D^2)\partial_{\zeta_1}u + (2D \zeta_1 - DC) \partial_{\zeta_2}u + (2D \zeta_1 - DC) \partial_{\zeta_3}u {\color{white}{} = - Cu} \\
            -(2D\zeta_3 + D^2)\partial_{\zeta_4}u + C( q + 1) \partial_qu 
            = - Cu
        \end{split}
        & \t{if }q \neq 0; \\
        u(t^+, \zeta^+, 0) = - u(t^-, \zeta^-, 0) & \t{if }q = 0.\\
    \end{cases}
\end{gather*}



    \subsection{Modeling the spread of disease with human intervention}\label{subsec:SIR}
\label{sec:SIR}
Consider an SIR model as in \cite{SIRS}, governed by the following equations of motion
\begin{equation*}
    \begin{cases}
        \dot{S} = \mu N - \dfrac{\beta S I}{N } - \mu S \\
        \,\dot{I} = \dfrac{\beta S I}{N } - \gamma I - \mu I - \delta I \\
        \dot{R} = \gamma I - \mu R
    \end{cases}
\end{equation*}
where $\mu$ represents the natural birth and death rate, $\gamma$ is the recovery rate, $\beta$ is the rate of infection, and $\delta$ is the death rate due to the disease. Now, consider the following scenario: after the number of infected individuals reaches a specific threshold fraction of the population $\alpha$, the government decides to intervene and treats a fraction $f$ of the infected individuals all at once. After this, the government lets the disease evolve again with its usual dynamics. Mathematically, the intervention is represented by the impact map
\begin{equation*}
    \begin{cases}
        S^+ = S^-\\
        I^+ = (1 -f)I^-\\
        R^+ = R^- + fI^- 
    \end{cases}
\end{equation*}
and impact surface $\Sigma = \R_+ \times \{\alpha N\}\times \R_+ \cong \R^2 \subset \R^3 $.

We are interested in the long term behaviour of this population. What percent of the individuals survive this disease? What are the best values for $\alpha$ and $f$ under which $\lim_{t \to \infty} N(t)$ is maximized?  

One interesting aspect about this example is that the hybrid Jacobian is no longer $1$, which is not surprising since the equations do not come from a Hamiltonian. 
\begin{align*}
    \Delta^* i_X \mu = - (1 - f) I \left(\frac{\beta S}{N} - \gamma - \mu - \delta\right)dS \wedge dR = (1 - f)\iota^*_S i_X \mu
\end{align*}
Hence, the hybrid Jacobian $\mathcal{J}_\mu^X(\Delta) = 1 - f$, so the hybrid Frobenius Perron-PDE \eqref{eq:chikos} becomes
\begin{gather*}
    \frac{\partial u}{\partial t} + \left(\mu N - \frac{\beta SI}{N} - \mu S \right)\frac{\partial u}{\partial S} + \left(\frac{\beta SI}{N}  - (\gamma + \mu + \delta) I\right)\frac{\partial u}{\partial I} \hspace{5cm} {} \\ {} \hspace{4cm}
    +(\gamma I - \mu R)\frac{\partial u}{\partial R} - \left(\beta \frac{I - S}{N } + 3\mu + \delta + \gamma\right)u = 0, \qquad I \neq \alpha N;\\
    {\color{white}I = \alpha N.,}\hspace{1.775cm} u(t^+,S, \alpha N ( 1- f), R) = (1 - f)u(t^-, S, \alpha N, R), \hspace{1.775cm} I = \alpha N.
\end{gather*}Assume we are only interested in the proportion of susceptible, infected and recovered individuals, and not in the total size of the population. Under this assumption we can further reduce the dimension by normalization. Let $i = I/N$, $s = S/N$, and $r = R/N$. Then $i + r + s = 1$,  the hybrid equations of motion become
\begin{align*}
    \begin{split}
        \dot{s} = & \ \mu - \mu s - (\beta - \delta) si\\
        \dot{i} = & \ \beta  si + \delta i^2 - (\gamma + \mu + \delta )i
    \end{split}
    \qquad \t{if } i \neq\alpha; \qquad \qquad 
    \begin{split}
        s^+ = & \ s^-\\
        i^+ = & \ (1 -f)i^-
    \end{split}
     \qquad \t{if } i  = \alpha,
\end{align*}
and the hybrid Frobenius-Perron PDE is
\begin{gather}\label{eq:SIR}
    \begin{split}
        \frac{\partial u}{\partial t} + (\mu - \mu s - (\beta - \delta) si )\frac{\partial u}{\partial s } + (\beta  si + \delta i^2 - (\gamma + \mu + \delta )i )\frac{\partial u}{\partial i }\qquad \qquad  \\ 
        = (2\mu - \beta(s - i) + \gamma + \delta - 3\delta i)\, u,
    \end{split}\\
    u(t^+, s, \alpha(1 - f)) =  \frac{- \beta\alpha s + \delta \alpha^2 - (g + \mu + \delta)\alpha}{- \beta\alpha s ( 1-f)+ \delta \alpha^2( 1-f)^2 - (g + \mu + \delta)\alpha( 1-f)}u(t^-, s, \alpha).\nonumber
\end{gather}
Consider an initial density $\rho(s, i) = e^{-(s - 0.5)^2 - 10(i - 1 + s)^2}$. Figure \ref{fig:simulations SIR} shows the evolution of this density under the PDE given in \eqref{eq:SIR}. Table \ref{table:SIR_param} shows the values of the parameters used during the simulation. Note that since $ i + s = 1 - r \leq 1$ the configuration space for this model is the lower triangle of $[0, 1]\times[0, 1]$.
\begin{figure}[!ht]
    \centering
     \begin{subfigure}[t]{0.32\textwidth}
         \centering
         \includegraphics[width=\textwidth]{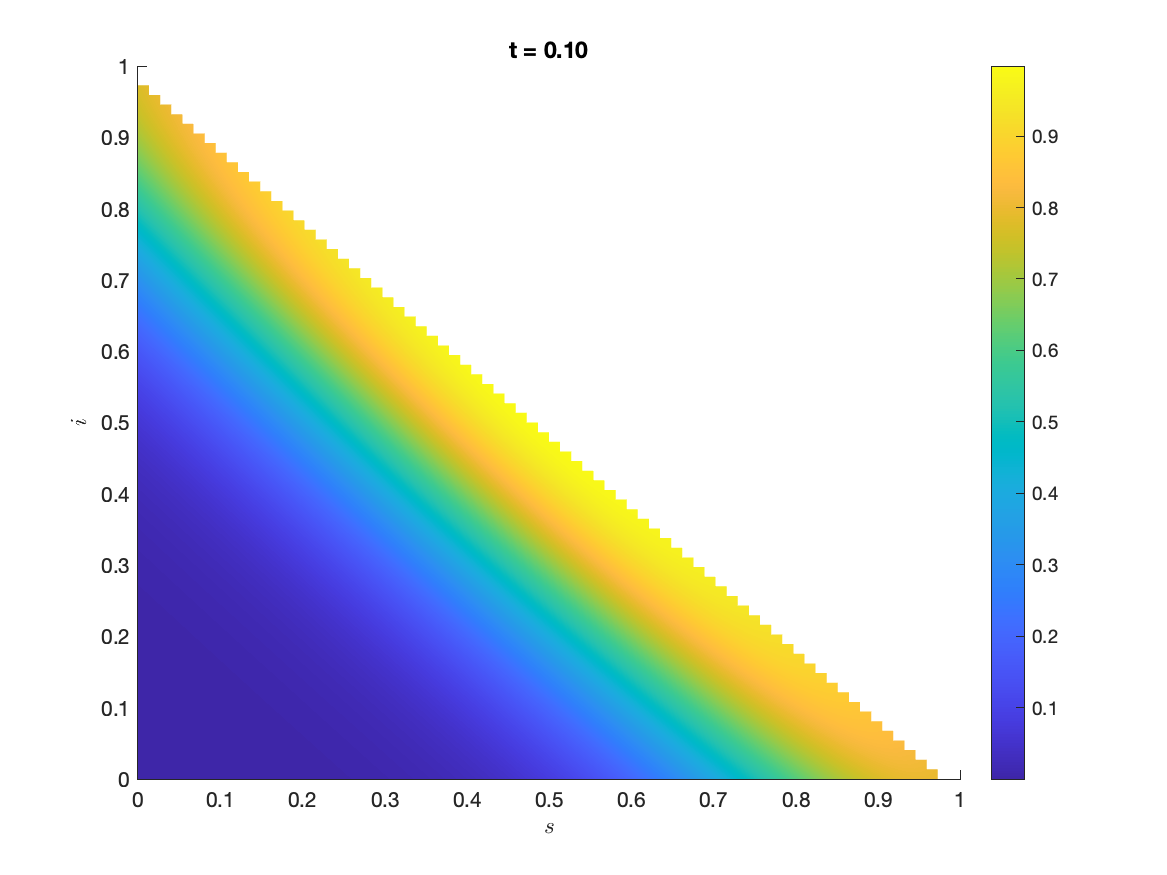}
         \caption{The initial density of the population, which is concentrated along the line $ s + i = 1$. }
         \label{fig:SIR_init_density}
     \end{subfigure}
     \hfill
      \begin{subfigure}[t]{0.32\textwidth}
         \centering
         \includegraphics[width=\textwidth]{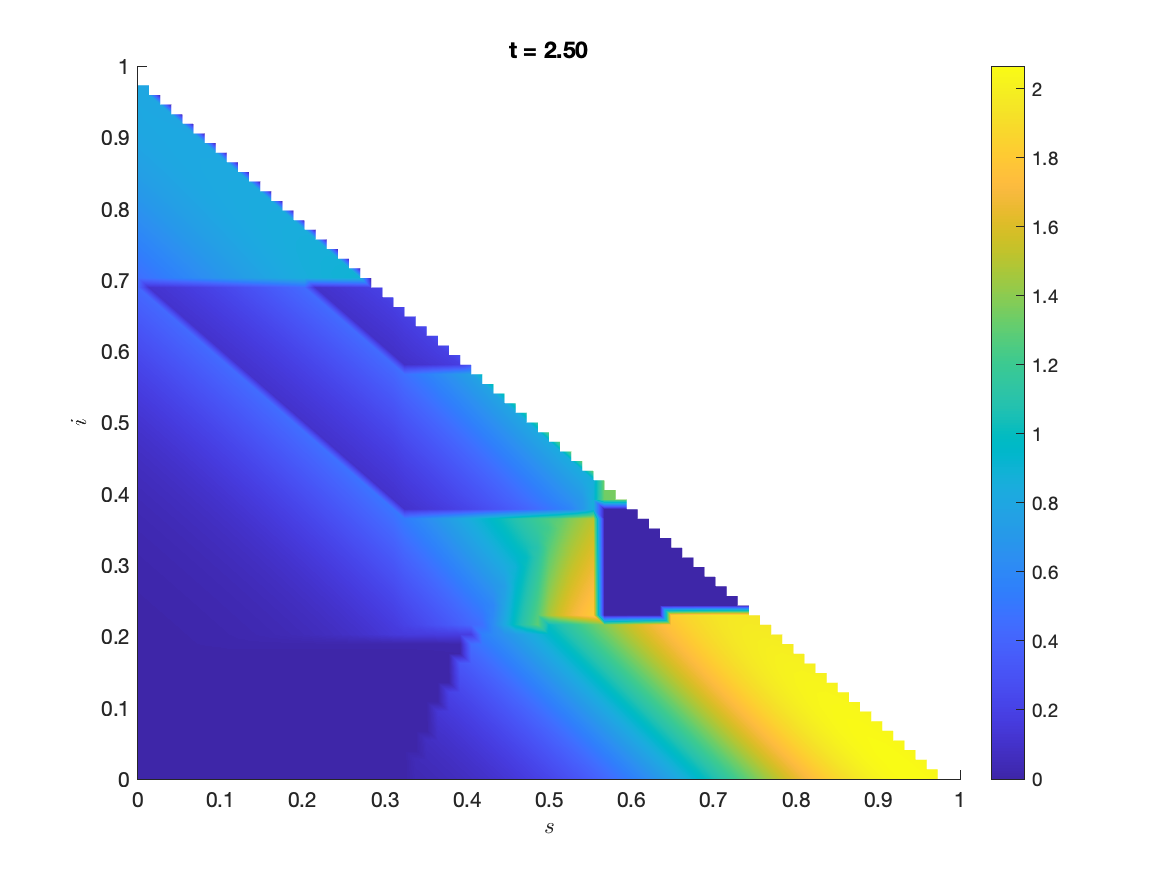}
         \caption{The intervention decreases the number of infected individuals by $30\%$ when it reaches $70\%$. If $I$ is initially greater than $70\%$, then the population becomes infected and eventually dies.}
         \label{fig:SIR 2.5}
     \end{subfigure}
     \hfill
     \begin{subfigure}[t]{0.32\textwidth}
         \centering
         \includegraphics[width=\textwidth]{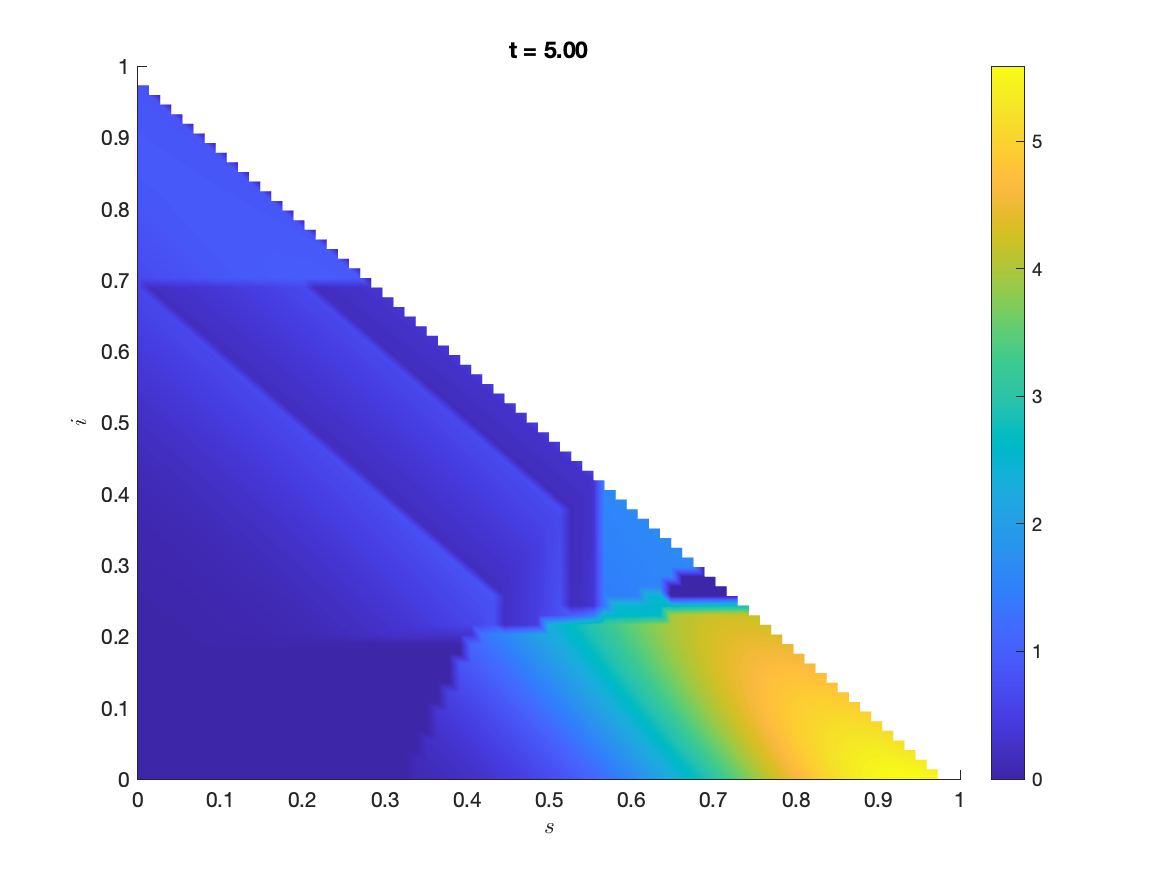}
         \caption{ At $t = 5$ these effects are exacerbated and we obtain a population where the percentage of infected individuals is likely to be less than $20\%$. }
         \label{fig:SIR 5}
     \end{subfigure}
     \hfill
    \caption{Evolution of initial density $\rho = e^{-(s - 0.5)^2 - 10(i - 1 + s)^2}$ under the hybrid transfer PDE \eqref{eq:SIR} corresponding to the SIR model with human intervention.}
    \label{fig:simulations SIR}
\end{figure}

\begin{table}[!ht]
    \centering
    \begin{tabular}{c|c|c|c|c|c|c}
        Parameter & $\beta$ &  $\gamma$ & $\delta$ &   $\mu $ & $f$ & $\alpha$ \\
        \hline
        Value & 0.1 & 0.1 & 0.2 & 0.1 & 0.3 & 0.7
    \end{tabular}
    \caption{Values of the parameters of the SIR model used in the simulations shown in Figure \ref{fig:simulations SIR}.}
    \label{table:SIR_param}
\end{table}

Although the simulations show that the percentage of infected population is likely to be low, this does not imply that the intervention was successful. It might be the case that the size of the  population decreases so much, that the individuals are mostly recovered.  In particular for initial percentage of infected individuals greater than 0.7 the population quickly becomes totally infected and the number of individuals $N$ decreases drastically.
In the future we would like to study in detail the entire 3 dimensional dynamics and the effect of the disease on the population count. 

\section{Conclusions}\label{sec:conclusions}
This paper contributes to the theoretical understanding of hybrid systems in two ways. First, it presents a continuity equation for the Frobenius-Perron operator (Theorem \ref{thm:general chikos}). This simplifies the problem of computing the time evolution of a given density under the underlying dynamics, to the quest of finding the solution of a partial differential equation. Second, it develops a theory of reduction for hybrid systems where the state space is a Lie group (Theorem \ref{thm:Impact_Reduction}). This is particularly useful in high dimensional systems, where even solving equation \eqref{eq:chikos} becomes difficult. 

We identify several future research directions.  Although the Hybrid Frobenius-Perron PDE offers great theoretical insight into the analysis of the transfer operator, the numerical methods employed for solving it should be further improved. A particular issue comes into play due to the fact that characteristics do not always stay within the predefined grid. In our current approach, when that happens, we integrate over the entire characteristic up to time $0$. However, this increases the computation time and becomes problematic when a large number of the characteristics suffer from this issue. Therefore, better methods to deal with this situation are needed. Moreover, different approaches for solving \eqref{eq:chikos} such as finite differences and finite volumes should be studied.

In order for Hybrid Lie-Poisson reduction to be valid, the impact surface needs to be the right coset of a codimension 1 normal Lie subgroup of the configuration space. These proprieties restrict the types of hybrid systems we can consider for reduction. In the future, we wish to study the instances where the conditions mentioned above are not satisfied. Even if reduction to $n + 1$ dimensions might not be plausible without these conditions, there might still be some $k<n$ such that the system can be reduced to $n + k$ dimensions. 

Finally, we are interested in the inverse problem, where we would like to find the underlying hybrid dynamics, using the evolution of a given density under the Frobenius-Perron operator. Suppose we can sample points from the flow of some initial distribution at any given time. These points will give an approximation to the solution of the \eqref{eq:chikos}. Can we tell whether the dynamics is hybrid just by looking at these points? Can we find an analytical expression for the ODEs that generate this dynamics?

\noindent\appendix
\label{appendix}

\section{Proof of the transverse property}\label{appendix:A}
\begin{lemma}[Transverse property]
    \label{lemma:Transverse Property Proof}
    Let $\Delta_*^X \colon TM|_\mathcal{S}\to TM|_{\Delta(\mathcal{S})}$ be the augmented differential and let $\iota_\mathcal{S} \colon \mathcal{S}\hookrightarrow M$ and $\iota_{\Delta(\mathcal{S})} \colon \Delta(\mathcal{S})\hookrightarrow M$ denote the respective inclusion maps. Then, 
    \begin{equation}\label{eqn:Transverse Property Equality in Proof}
        \det_{\mu} \Delta^X_* = \ \ \ \ \det_{\mathclap{\ \ \ \ \ \ \iota^{\!*}_\mathcal{S} i_X \mu \to \iota^{\!*}_{\Delta(\mathcal{S})} i_X \mu }} (\Delta_*) \ \ \ \ \, .
    \end{equation}
\end{lemma}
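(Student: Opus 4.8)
The plan is to verify the identity pointwise at an arbitrary $x \in \mathcal{S}$ by evaluating both form determinants on a single basis adapted to the transversal splitting. Since $X$ is transverse to $\mathcal{S}$, at each $x \in \mathcal{S}$ we have $X_x \notin T_x\mathcal{S}$, so $T_xM = T_x\mathcal{S} \oplus \mathrm{Span}(X_x)$. First I would fix a basis $e_1, \ldots, e_{n-1}$ of $T_x\mathcal{S}$ and complete it to the basis $e_1, \ldots, e_{n-1}, X_x$ of $T_xM$. The same transversality guarantees that $\alpha \coloneqq \iota^*_\mathcal{S} i_X\mu$ is a genuine volume-form on $\mathcal{S}$, since
\[
    \alpha_x(e_1, \ldots, e_{n-1}) = (i_X\mu)_x(e_1, \ldots, e_{n-1}) = \mu_x(X_x, e_1, \ldots, e_{n-1}) \neq 0 ;
\]
this nonvanishing is exactly what makes the form determinant $\det_{\alpha \to \beta}(\Delta_*)$ well-defined and is the only place where transversality is essential. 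I would also note at the outset that $\Delta_*$ maps $T_x\mathcal{S}$ into $T_{\Delta(x)}\Delta(\mathcal{S})$, since $\Delta(\mathcal{S})$ is by definition the image, so the pullback $\Delta_*^*\beta$ and the determinant $\det_{\alpha \to \beta}(\Delta_*)$ are meaningful, where $\beta \coloneqq \iota^*_{\Delta(\mathcal{S})} i_X\mu$.

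Next I would evaluate the left-hand side. By the defining property of the form determinant, $\det_\mu(\Delta_*^X)\cdot\mu = (\Delta_*^X)^*\mu$, so
\[
    \det\nolimits_\mu(\Delta_*^X)(x) \cdot \mu_x(e_1, \ldots, e_{n-1}, X_x) = \mu_{\Delta(x)}\big(\Delta_*^X e_1, \ldots, \Delta_*^X e_{n-1}, \Delta_*^X X_x\big) .
\]
Here I would invoke Definition \ref{def:Extended differential}: since each $e_i \in T_x\mathcal{S}$ we have $\Delta_*^X e_i = \Delta_* e_i$, and since $X_x \in \mathrm{Span}(X_x)$ we have $\Delta_*^X X_x = X_{\Delta(x)}$. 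Thus the right-hand side equals $\mu_{\Delta(x)}(\Delta_* e_1, \ldots, \Delta_* e_{n-1}, X_{\Delta(x)})$.

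The key bookkeeping step is to move the flow direction back into the first slot of $\mu$ on both the domain and the target, each move costing the same sign. Using $i_X\mu(w_1, \ldots, w_{n-1}) = \mu(X, w_1, \ldots, w_{n-1})$ and cycling $X$ past $n-1$ transpositions, I get $\mu_x(e_1, \ldots, e_{n-1}, X_x) = (-1)^{n-1}\alpha_x(e_1, \ldots, e_{n-1})$, and likewise, since $\Delta_* e_i \in T_{\Delta(x)}\Delta(\mathcal{S})$,
\[
    \mu_{\Delta(x)}(\Delta_* e_1, \ldots, \Delta_* e_{n-1}, X_{\Delta(x)}) = (-1)^{n-1}\beta_{\Delta(x)}(\Delta_* e_1, \ldots, \Delta_* e_{n-1}) = (-1)^{n-1}(\Delta_*^*\beta)_x(e_1, \ldots, e_{n-1}) .
\]
The two factors of $(-1)^{n-1}$ cancel, leaving $\det_\mu(\Delta_*^X)(x)\cdot\alpha_x(e_1, \ldots, e_{n-1}) = (\Delta_*^*\beta)_x(e_1, \ldots, e_{n-1})$. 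Comparing this with the defining relation $\Delta_*^*\beta = \det_{\alpha\to\beta}(\Delta_*)\cdot\alpha$ and dividing by the nonzero scalar $\alpha_x(e_1, \ldots, e_{n-1})$ yields $\det_\mu(\Delta_*^X)(x) = \det_{\alpha\to\beta}(\Delta_*)(x)$ at the arbitrary point $x$, which is the claimed equality.

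I expect the only genuine subtlety — rather than a real obstacle — to be the consistent tracking of the orientation sign $(-1)^{n-1}$; the whole content is that it appears identically on both sides and cancels, so no orientation or transversality hypothesis on $\Delta(\mathcal{S})$ is required and the argument goes through in every dimension. The remaining care is purely in confirming, via the direct-sum decomposition, that the chosen basis simultaneously diagonalizes the two defining relations, which is precisely why evaluating against it reduces the whole statement to the elementary sign computation above.
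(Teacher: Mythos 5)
Your proof is correct and follows essentially the same route as the paper's: both rest on the transversal splitting $T_xM = T_x\mathcal{S}\oplus\mathrm{Span}(X_x)$, the definition of the augmented differential, and the identity $i_X\mu(\,\cdot\,) = \mu(X,\cdot\,)$ relating $\mu$ to the induced forms $\alpha$ and $\beta$. The only difference is that you evaluate both form determinants on a single adapted basis $e_1,\ldots,e_{n-1},X_x$ (legitimate, since the determinant is a scalar independent of the test vectors, and $\alpha_x(e_1,\ldots,e_{n-1})\neq 0$ by transversality), which collapses the paper's alternating sums over $k$ into a one-term computation with a cancelling sign $(-1)^{n-1}$.
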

\begin{proof}
    Fix $x \in \mathcal{S}$ and let $v^i \in T_x M$ for $i = 1, \ldots, n$ where $n = \dim M$.  Then, we can decompose $v^i$ into $v^i = u^i +\alpha^i X_x$ where $u^i \in T_x\mathcal{S}$ and $\alpha^i \in \R$. By the definition of $\det_\mu$, as stated in Definiton (3.4), we know that 
    \begin{align}
       \big(\!\det_\mu \Delta^X_*\big)\mu( v^1, \ldots, v^n)
       = & \, \mu_{\Delta(x)}\big( \Delta^X_* v^1, \ldots, \Delta^X_*v^n\big) \label{eqn:A.1 Proof start}
    \intertext{Expanding $v^i$ through the mentioned decomposition: $v^i = u^i +\alpha^i X_x$}
        = & \, \mu_{\Delta(x)}\left(\Delta^X_*\big(u^1 + \alpha^1 X_x\big), \ldots, \Delta^X_*\big(u^n + \alpha^n X_x\big)\right) \nonumber
    \intertext{Since is $\Delta^X_*$ a linear map and $\Delta^X_*(X_x) = X_{\Delta(x)}$ by definition}
        = & \, \mu_{\Delta(x)}\big(\Delta_*\left(u^1\right) + \alpha^1 X_{\Delta(x)}, \ldots, \Delta_*\left(u^n\right) + \alpha^n X_{\Delta(x)}\big) \nonumber
    \intertext{By the linearity of $k$-forms, we can expand the previous expression as follows}
        = & \, \mu_{\Delta(x)}\big(\Delta_* u^1, \Delta_* u^2_x, \ldots, \Delta_* u^n\big) \label{eqn:vanishes}\\
        + & \,  \sum_{k = 1}^n (-1)^{k - 1} \alpha^k \mu_{\Delta(x)}\big( \Delta_* u^1_x, \ldots, \widehat{\Delta_* u^k}, X_{\Delta(x)}, \ldots, \Delta_* u^n\big) \nonumber
    \intertext{Since $T_x\mathcal{S}$ is $n - 1$ dimensional and each $u^i \in T_x\mathcal{S}$, the set $\{u^i\}_{i=1}^n$ is linearly dependent. Then, since $k$-forms with linearly dependent inputs are equal to zero, we know that \eqref{eqn:vanishes} vanishes, so
    }
        = & \, \sum_{k = 1}^n (-1)^{k - 1}\alpha^k\mu_{\Delta(x)}\big(\Delta_* u^1, \ldots, \widehat{\Delta_* u^k}, X_{\Delta(x)}, \ldots, \Delta_* u^n\big)\nonumber
    \intertext{Rewriting using the fact that $\mu_{\Delta(x)}$ is an alternating product}
        = & \, \sum_{k = 1}^n (-1)^{k - 1} \alpha^k \mu_{\Delta(x)}\big(X_{\Delta(x)}, \Delta_* u^1_x, \ldots, \widehat{\Delta_* u^k}, \ldots, \Delta_* u^n\big) \nonumber \\
        = & \, \sum_{k = 1}^n (-1)^{k - 1} \alpha^k i_X \mu_{\Delta(x)}\big(\Delta_* u^1, \ldots, \widehat{\Delta_* u^k}, \ldots, \Delta_* u^n \big) \nonumber
    \end{align} 
    Before we solve for $\det_\mu \Delta^X_*$, we will rewrite $\mu(v^1, \ldots, v^{n - 1})$, the term that $\det_\mu \Delta^X_*$ is multiplied by in \eqref{eqn:A.1 Proof start}. Yet again using the decomposition $v^i = u^i + \alpha^i X_x$ and a process similar to that of rewriting the right-hand side of \eqref{eqn:A.1 Proof start}, yields
    \begin{align*} 
        \mu(v^1, \ldots, v^{n - 1}) 
         = & \sum_{k = 1}^n \mu_{x}(u^1, \ldots\widehat{, u^k}, \alpha^k X_x, \ldots, u^n) \\
         = & \sum_{k = 1}^n (-1)^{k - 1} \alpha^k i_X \mu(u^1, \ldots \widehat{, u^k}, \ldots, u^n).
    \end{align*}
    Now, we can isolate $\det_\mu \Delta^X_*$ from all other terms in \eqref{eqn:A.1 Proof start},
    \begin{gather}\label{smiley}
        \det_\mu \Delta^X_* = \frac{\sum_{k = 1}^n (-1)^{k - 1} \alpha^k  i_X \mu_{\Delta(x)}(\Delta_* u^1\ldots, \widehat{\Delta_* u^k}, \ldots, \Delta_* u^n)}{\sum_{k = 1}^n (-1)^{k - 1} \alpha^k i_X \mu(u^1, \ldots \widehat{, u^k}, \ldots, u^{n - 1})}.
    \end{gather}
    \noindent We have finished deriving the left-hand side of \eqref{eqn:Transverse Property Equality in Proof}. For computing the right-hand side, we must carefully choose which vectors we plug into the form determinant definition. With a later part of the proof in mind, we modify the earlier set of $u^k$ vectors and choose 
    \begin{align*}
        \begin{cases}
            \{\alpha^k u^1, \ldots, \widehat{u^k}, \ldots, u^n\} & \t{for } k \in \set{2, 3, \ldots, n}; \\
            \set{\alpha^1 u^2_x, u^3_x, \ldots, u^n} & \t{for } k = 1,
        \end{cases}
    \end{align*}
    both of which we will denote by the top case since the proofs are identical. The reasoning behind this choice becomes apparent in \eqref{eqn:summing}. Now computing the the right-hand side of \eqref{eqn:Transverse Property Equality in Proof},
    \begin{align*}
        & \ \ \ \ \det_{\mathclap{\ \ \ \ \iota^{\!*}_\mathcal{S} i_X \mu \to \iota^{\!*}_{\Delta(\mathcal{S})} i_X \mu }} (\Delta_*) \ \ \ \alpha^k i_X\mu\big( u^1, \ldots\widehat{, u^k}, \ldots, u^n\big) 
        && \t{} \\ 
        & \qquad = \quad \ \det_{\mathclap{\ \ \ \ \iota^{\!*}_\mathcal{S} i_X \mu \to \iota^{\!*}_{\Delta(\mathcal{S})} i_X \mu }} (\Delta_*) \ \ \ \iota^{\!*}_\mathcal{S}(i_X\mu)\big(\alpha^k u^1, \ldots\widehat{, u^k}, \ldots, u^n\big) 
        && \t{Since }u^k \in T_x \mathcal{S} \subset T_x M \\ 
        & \qquad = \Delta^* \iota^{\!*}_{\Delta(\mathcal{S})}(i_X \mu)\big(\alpha^ku^1, \ldots, \widehat{\Delta_* u^k}, \ldots, u^n\big) 
        && \t{Definition of the form determinant} \\ 
        & \qquad = \iota^{\!*}_{\Delta(\mathcal{S})}(i_X \mu_{\Delta(x)}) \big(\alpha^k \Delta_* u^1, \ldots, \widehat{\Delta_* u^k}, \ldots, \Delta_* u^n\big) 
        && \t{Applying the pullback of $\Delta$} \\ 
        & \qquad = i_X \mu_{\Delta(x)} \big(\alpha^k \Delta_* u^1, \ldots, \widehat{\Delta_* u^k}, \ldots, \Delta_* u^n\big) 
        && \t{Since $\Delta_* u^k \in T_{\Delta(x)} (\Delta(\mathcal{S})) \subset T_{\Delta(x)} M$}
    \end{align*}
    Since equality still holds if we multiply both sides by $(-1)^{k-1}$, we can then sum over $k$ to get
    \begin{align}
        & \quad\det_{\mathclap{\ \ \ \ \iota^{\!*}_\mathcal{S} \alpha^k i_X \mu \to \iota^{\!*}_{\Delta(\mathcal{S})} i_X \mu }} (\Delta_*) \ \ \ \ \sum_{k = 1}^n (-1)^{k - 1} i_X \mu_{\Delta(x)} \big(u^1, \ldots\widehat{, u^k}, \ldots, u^n\big) \nonumber \\
        & \hspace{4cm} = \sum_{k = 1}^n (-1)^{k - 1} \alpha^k i_X \mu_{\Delta(x)}\big(\Delta_* u^1, \ldots, \widehat{\Delta_* u^k}, \ldots, \Delta_* u^n\big). \label{eqn:summing}
    \end{align}
    Finally, isolating the determinant and looking back at previous equations, we get that
    \begin{align*}
        \det_{\mathclap{\ \ \ \ \iota^{\!*}_\mathcal{S} i_X \mu \to \iota^{\!*}_{\Delta(\mathcal{S})} i_X \mu }} (\Delta_*) \ \ \ 
        = \frac{\sum_{k = 1}^n (-1)^{k - 1} i_X \mu_{\Delta(x)}\big(\Delta_* u^1, \ldots, \widehat{\Delta_*(u^k)}, \ldots, \Delta_* u^n)}{\sum_{k = 1}^n (-1)^{k - 1} i_X \mu \big(u^1, \ldots\widehat{, u^k}, \ldots, u^n\big)} 
        = \eqref{smiley} 
        = \det_{\mu}(\Delta_*^X).
    \end{align*}
\end{proof}

\section{Proof of the form determinant and inverse form determinant relation}\label{appendix:B}
\begin{lemma}[Relation between form determinant and inverse form determinant]
\label{lemma:Relation Between Form Determinant and Inverse Form Determinant Proof}
Given a diffeomorphism $f \colon M \to N$ and volume forms $\mu \in \Omega^n(M)$ and $\eta \in \Omega^n(N)$, the relation between $\ds \det_{\mu \to \eta} f_*$ and $\ds \det_{\eta \to \mu } f_*^{-1}$ is given by
\begin{align*}
    \det_{\eta \to \mu}(f_*) = 
    \frac{1}{\ds\det_{\mu \to \eta}(f_*^{-1}) \circ f } \qquad \text{or} \qquad
    \det_{\mu \to \eta}(f_*^{-1}) = \frac{1}{\ds\det_{\eta \to \mu}(f_*)} \,\circ\, f^{-1}.
\end{align*}
\end{lemma}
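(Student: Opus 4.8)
The plan is to read the two form determinants straight off their defining relations in Definition~\ref{def:Form Determinant} and then convert one relation into the other by pulling back along $f^{-1}$, using functoriality of the pullback and the fact that a volume-form is nowhere zero and hence can be cancelled. Since $f_*\colon TM\to TN$ and $f_*^{-1}=(f^{-1})_*\colon TN\to TM$ are the bundle maps of the diffeomorphisms $f$ and $f^{-1}$, their form pullbacks coincide with the ordinary pullbacks $f^*$ and $(f^{-1})^*$ of differential forms. Thus by Definition~\ref{def:Form Determinant} the function $\det_{\mu\to\eta}(f_*)$ lives on $M$ and satisfies $\det_{\mu\to\eta}(f_*)\cdot\mu=f^*\eta$, while $\det_{\eta\to\mu}(f_*^{-1})$ lives on $N$ and satisfies $\det_{\eta\to\mu}(f_*^{-1})\cdot\eta=(f^{-1})^*\mu$.

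The core step is to apply $(f^{-1})^*$ to the first of these equations and simplify. On the right, functoriality gives $(f^{-1})^*f^*\eta=(f\circ f^{-1})^*\eta=\eta$. On the left, the pullback splits over the product of a function and a form: $(f^{-1})^*\big(\det_{\mu\to\eta}(f_*)\cdot\mu\big)=\big(\det_{\mu\to\eta}(f_*)\circ f^{-1}\big)\cdot(f^{-1})^*\mu$, and substituting the defining relation for $\det_{\eta\to\mu}(f_*^{-1})$ turns $(f^{-1})^*\mu$ into $\det_{\eta\to\mu}(f_*^{-1})\cdot\eta$. Equating the two sides yields
\[
\big(\det\nolimits_{\mu\to\eta}(f_*)\circ f^{-1}\big)\cdot\det\nolimits_{\eta\to\mu}(f_*^{-1})\cdot\eta=\eta .
\]
Because $\eta$ is a volume-form it is nowhere vanishing, so it may be cancelled pointwise on $N$, leaving the product of the two Jacobian factors equal to $1$ and hence
\[
\det\nolimits_{\eta\to\mu}(f_*^{-1})=\frac{1}{\det\nolimits_{\mu\to\eta}(f_*)\circ f^{-1}} ,
\]
which is exactly the relation of Lemma~\ref{lemma:Relation Between Form Determinant and Inverse Form Determinant}.

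The two displayed forms in the statement are then the same identity rewritten: interchanging the roles of $f\leftrightarrow f^{-1}$ and $\mu\leftrightarrow\eta$ (equivalently, running the argument with the pullback $f^*$ applied to the defining relation for $f_*^{-1}$) gives the companion equation, and passing between $\tfrac{1}{(\,\cdot\,)\circ f^{-1}}$ and $\tfrac{1}{(\,\cdot\,)}\circ f^{-1}$ is immediate since precomposition with a fixed map commutes with taking reciprocals. I expect the only real obstacle to be bookkeeping rather than ideas: one must keep straight on which base manifold each determinant function is defined, apply the product rule for pullbacks correctly (the pullback of a scalar is precomposition, which produces the $\circ f^{-1}$ factor), and invoke nonvanishing of the top form to justify the final cancellation. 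No nondegeneracy hypothesis beyond $f$ being a diffeomorphism is needed, since that already guarantees $f_*$ and $f_*^{-1}$ are fiberwise isomorphisms and both determinant functions are nowhere zero.
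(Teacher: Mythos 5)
Your proposal is correct and follows essentially the same route as the paper's Appendix B proof: both read off the two defining relations from Definition 3.4, compose the pullbacks of $f$ and $f^{-1}$ into the identity pullback, use that the pullback of a function times a form is the precomposed function times the pulled-back form, and cancel the nowhere-vanishing volume form. The only (immaterial) difference is that you carry out the cancellation on $N$ against $\eta$, whereas the paper works on $M$ and cancels against $\mu$.
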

\begin{proof} 
Let $\mu \in \Omega^n(M)$ and $\eta \in \Omega^{n}(N)$ be volume forms. Recall that, by definition (3.4), the determinant of $f_*^{-1} \colon TN \to TM$ is the $C^{\infty}(N)$ function $\det_{\eta \to \mu }(\Delta^{-1}_*)$ such that 
\begin{align}
    \det_{ \eta \to \mu}(f_*^{-1})\eta = (f^{-1})^*\mu. 
\end{align}
Since $\mu$ and $\eta$ are non-singular, their form determinants are nonzero, so we can divide by the determinants to get
\begin{align*}
    \mu 
    = \big(\!\det_{\mu \to \eta} (f_*) \big)^{-1} f^*\eta 
    \quad \text{ and } \quad \eta 
    = \big(\!\det_{\mu \to \eta} (f_*^{-1}) \big)^{-1}( f^{-1})^*\mu.
\end{align*} 
As the above functions are inverses of each other, they have the property that $f^{-1} \circ f = \text{Id}_M$. 
Taking the pullback of both sides of this property gives us
\begin{equation}\label{eqn:Pushforward Identity}
    (f^{-1} \circ f)^* = f^* \,\circ\, (f^{-1})^* = \text{Id}^*_M,
\end{equation}
Now that we have the pullbacks written out, we get
\begin{align*}
    \mu 
    = & \, f^* \circ (f^{-1})^*\mu 
    && \t{By \eqref{eqn:Pushforward Identity}} \\
    = & \, f^*\big( \det_{\eta \to \mu}(f^{-1}_*)\eta \big) 
    && \t{By the defintion of the form determinant of } (f^{-1})^* \\
    = & \, \big(\det_{\eta \to \mu}(f^{-1}_*) \circ f \big)(f^*\eta)
    && \t{Property of pullback on top forms \cite[Proposition~14.20]{Lee2003}} \\
    = & \, \big(\det_{\eta \to \mu}(f^{-1}_*) \circ f \big) \det_{\mu \to \eta }(f_*) \mu
    && \t{By the defintion of } \det_{\mu \to \eta}f_*
\end{align*}
Finally, dividing gives us
\begin{align*}
    \det_{\mu \to \eta}(f_*) = \frac{1}{\displaystyle{\det_{\mu \to \eta}}(f_*^{-1}) \circ f } 
    \qquad \text{and} \qquad 
    \det_{\eta \to \mu }(f_*^{-1}) = \frac{1}{\displaystyle{\det_{\mu \to \eta}}(f_*)} \circ f^{-1}.
\end{align*}
\end{proof}

 For unconstrained or holonomically constrained systems with a stationary impact surface, $\mathcal{J}^X_\mu(\Delta) = c^4$. We show this by explicitly computing $\Delta^* i_X \mu = \mathcal{J}^X_\mu(\Delta)\iota^* i_X\mu$ which turns out to be a relatively simple computation since all but one 1-form is equal to zero. 
\begin{theorem}\label{thm:inelastic_Jacobian}
    Let $H:T^*Q\to\mathbb{R}$ be a natural Hamiltonian and $\mathcal{S}\subset Q$ the impact surface. Let $\Delta$ be the inelastic impact map given by
    \begin{equation*}
        \Delta(x,p) = \left( x, p - (1+c^2)\frac{p(\nabla h)}{dh(\nabla h)}dh \right).
    \end{equation*}
    Then, the hybrid Jacobian is simply $\mathcal{J}_{\omega^n}^{X_H}(\Delta) = c^4$.
\end{theorem}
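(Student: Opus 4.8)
The plan is to compute both sides of the defining relation $\Delta^* i_{X_H}\mu = \mathcal{J}^{X_H}_{\mu}(\Delta)\,\iota^* i_{X_H}\mu$ directly in adapted coordinates and read off the ratio. Since only this ratio matters, I may replace $\mu=\omega^n$ by the coordinate volume $dq^1\wedge\cdots\wedge dq^n\wedge dp_1\wedge\cdots\wedge dp_n$ (they agree up to a nonzero constant and sign). I would choose coordinates on $Q$ near a point of $\mathcal{S}$ so that $h=q^n$; then locally $\mathcal{S}=\{q^n=0\}$, $dh=dq^n$, and the guard in $T^*Q$ has coordinates $(q^1,\dots,q^{n-1},p_1,\dots,p_n)$. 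Writing $X_H=\dot q^i\,\partial_{q^i}+\dot p_i\,\partial_{p_i}$ with $\dot q^i=(\mTse^{-1})^{ij}p_j$, the interior product $i_{X_H}\mu$ is a sum of $(2n-1)$-forms, each omitting exactly one of the one-forms $dq^i,dp_i$. Pulling back along $\iota_{\mathcal{S}}$ kills $dq^n$, so every summand still containing $dq^n$ vanishes; the unique survivor is the term omitting $dq^n$, namely the $\dot q^n$ term. This is the ``all but one form vanishes'' phenomenon, and it gives
\[
\iota^* i_{X_H}\mu=(-1)^{n-1}\big(\mTse^{-1}(dh,p)\big)\,dq^1\wedge\cdots\wedge dq^{n-1}\wedge dp_1\wedge\cdots\wedge dp_n,
\]
where $\dot q^n=(\mTse^{-1})^{nj}p_j=\mTse^{-1}(dh,p)=p(\nabla h)$. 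In particular all metric-derivative ($\dot p_i$) terms drop out.

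Next I would pull back along $\widetilde\Delta(q,p)=(q,R(p))$ with $R(p)=p-(1+c^2)\frac{p(\nabla h)}{dh(\nabla h)}dh$. Because position is unchanged and $q^n$ is set to $0$, again $\widetilde\Delta^* dq^n=0$, so the same single term survives. Its coefficient is now $\dot q^n$ evaluated at the reflected momentum, i.e.\ $\mTse^{-1}(dh,R(p))$, while the fiber volume pulls back as $d(R(p)_1)\wedge\cdots\wedge d(R(p)_n)$. This is the one delicate step: $R$ depends on $q$ through the metric, so each $d(R(p)_i)$ carries $dq^a$ corrections; however these wedge to zero against the $dq^1\wedge\cdots\wedge dq^{n-1}$ already present (and no $dq^n$ is produced on the guard), so only the fiber-linear part contributes and $d(R(p)_1)\wedge\cdots\wedge d(R(p)_n)\equiv(\det R)\,dp_1\wedge\cdots\wedge dp_n$ modulo $dq^1,\dots,dq^{n-1}$.

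Two short algebraic computations then close the argument. Decomposing $R=P_\perp-c^2P_\parallel$, where $P_\parallel$ is the $\mTse^{-1}$-orthogonal projection onto $\mathrm{span}(dh)$ and $P_\perp=I-P_\parallel$, I read off eigenvalues $1$ (multiplicity $n-1$) and $-c^2$ (multiplicity $1$), so $\det R=-c^2$ (equivalently, the matrix determinant lemma applied to the rank-one update $I-(1+c^2)\,dh\,(\nabla h)^{T}/|dh|^2$). The same decomposition gives $\mTse^{-1}(dh,R(p))=\mTse^{-1}(dh,p)-(1+c^2)\mTse^{-1}(dh,p)=-c^2\,\mTse^{-1}(dh,p)$. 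Substituting both into
\[
\mathcal{J}^{X_H}_{\mu}(\Delta)=\frac{\mTse^{-1}(dh,R(p))\,\det R}{\mTse^{-1}(dh,p)},
\]
the factor $\mTse^{-1}(dh,p)$ cancels and I am left with $(-c^2)\cdot(-c^2)=c^4$, independent of $n$. The main obstacle is purely the bookkeeping in the pullback step---confirming that the $\dot p_i$ terms and all $q$-dependent corrections to $R$ are annihilated by the wedge---after which the two factors of $-c^2$ (one from $\det R$ on the fiber, one from the normal velocity $\dot q^n$ at the reflected momentum) combine to give the claimed $c^4$.
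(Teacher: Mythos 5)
Your proposal is correct, and it reaches the same answer by the same overall strategy as the paper --- adapted coordinates with $s=q^n$, the observation that restriction to the guard kills every summand of $i_{X_H}\mu$ except the one carrying the normal velocity $\dot q^n$, and then a direct comparison of $\Delta^*i_{X_H}\mu$ with $\iota^*i_{X_H}\mu$. Where you genuinely diverge is in evaluating the pulled-back side. The paper exploits the Hamiltonian structure, writing $i_{X_H}\omega^n\propto dH\wedge\omega^{n-1}$ and reducing everything to the single partial derivative $\partial(H\circ\Delta)/\partial p_n$ of the post-impact kinetic energy; the identity $(1\pm c^2)^2-2(1\pm c^2)=c^4-1$ then does all the work in one opaque algebraic step (the paper's own proof actually carries this out with a coefficient $1-c^2$ rather than the $1+c^2$ of the statement --- immaterial, since both give $c^4$). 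You instead factor the coefficient as $\bigl(\det R\bigr)\cdot\mTse^{-1}(dh,R(p))/\mTse^{-1}(dh,p)$ and compute each factor from the spectral decomposition $R=P_\perp-c^2P_\parallel$, getting $(-c^2)\cdot(-c^2)$. This buys two things: it makes visible \emph{why} the answer is $c^4$ (it is $\lambda^2$ for $\lambda=-c^2$ the normal eigenvalue of $R$, one factor from the fiber volume and one from the reflected normal velocity), and it explicitly disposes of the $q$-dependence of $R$ through the metric via the wedge with $dq^1\wedge\cdots\wedge dq^{n-1}$ --- a bookkeeping point the paper's proof glosses over entirely, since it never computes $\Delta^*dp_i$. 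Your route is also slightly more robust: it uses only that $X$ is a vector field transverse to the guard with $n$-th component $\mTse^{-1}(dh,p)$ and that $\Delta$ is fiberwise linear, not the full $dH\wedge\omega^{n-1}$ identity.
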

\begin{proof}  To calculate the hybrid Jacobian for inelastic impact systems without constraints, we must isolate $\mathcal{J}^X_\mu(\Delta)$ from $\Delta^* i_X \mu = \mathcal{J}^X_\mu(\Delta)\iota^* i_X\mu$, where $\mu = \omega^n$ and $\omega = dx^i\wedge dp_i$ is our symplectic form on $T^*Q$. First, calculating the exterior derivative of the Hamiltonian for later use
\begin{align*}
    dH = & \ \frac{\partial H}{\partial x^k}dx^k + \frac{\partial H}{\partial p_k}dx^ k\\
       = & \ \frac{\partial}{\partial x^k}\left( \frac{1}{2}g^{ij}p_ip_j + V(x)\right)dx^k + \frac{\partial }{\partial p_k}\left( \frac{1}{2}g^{ij}p_ip_j + V(x)\right)dp_k \\
       = & \ \left( \frac{1}{2}\frac{\partial g^{ij}}{\partial x^k}p_ip_j + \frac{\partial V(x)}{\partial x^k}\right)dx^k + \left( \frac{1}{2}g^{ij}\frac{\partial p_i}{\partial p_k} p_j + \frac{1}{2}g^{ij}p_i\frac{\partial p_j}{\partial p_k}\right)dp_k  \\
       = & \ \left( \frac{1}{2}\frac{\partial g^{ij}}{\partial x^k}p_ip_j + \frac{\partial V(x)}{\partial x^k}\right)dx^k + \left( \frac{1}{2}g^{ij}{\delta_i}^k p_j + \frac{1}{2}g^{ij}p_i{\delta_j}^k\right)dp_k \\
       = & \ \left( \frac{1}{2}\frac{\partial g^{ij}}{\partial x^k}p_ip_j + \frac{\partial V(x)}{\partial x^k}\right)dx^k + g^{kj} p_j dp_k 
\end{align*}
Now to calculate $i_X \omega$ for the equation for the hybrid Jacobian. Since $i_X \omega = dH$, we know that 
\begin{align*}
    i_X\omega^n = & \ i_X(\omega \wedge \ldots \wedge \omega )\\
    = & \ dH \wedge \omega^{n-1}\\
    = & \ \left(\left( \frac{1}{2}\frac{\partial g^{ij}}{\partial x^k}p_ip_j + \frac{\partial V(x)}{\partial x^k}\right)dx^k + g^{kj} p_j dp_k  \right)\\
    & \quad \quad \quad  \quad \quad \quad \quad \quad \wedge \left(\sum_{k=1}^{n}dx^1\wedge dp_1 \wedge \ldots \wedge \widehat{dx^k \wedge dp_k} \wedge \ldots \wedge dx^n \wedge dp_n \right) \\
    = & \ \left( \frac{1}{2}\frac{\partial g^{ij}}{\partial x^k}p_ip_j + \frac{\partial V(x)}{\partial x^k}\right)\sum_{k=1}^{n}dx^1\wedge dp_1 \wedge \ldots \wedge dx^k \wedge \widehat{dp_k} \wedge \ldots \wedge dx^n \wedge dp_n  \\
    + & \ g^{kj} p_j \sum_{k=1}^{n}dx^1\wedge dp_1 \wedge \ldots \wedge \widehat{dx^k} \wedge dp_k \wedge \ldots \wedge dx^n \wedge dp_n
\end{align*} 
Since we have the quasi-smooth dependence property, we can choose the local coordinates about any point in the gurad so that $\mathcal{S} = \{(x^1,\ldots, x^n) \in \mathbb{R}^n : s(x^1,\ldots, x^n) =x^n \}$, which implies $ds = dx^n$. Consequentially, all terms in $i_X \omega$ that contain $dx^n$ are equal to zero on $\mathcal{S}$, so  
\begin{align}
    \iota^* i_X \omega^n = g^{nj} p_j dx^1\wedge dp_1 \wedge \ldots \wedge \widehat{dx^n} \wedge dp_n \label{eqn:iomegaapp}
\end{align}
and from this, we can see that the only term that we will need to calculate is $\partial H/\partial p_n$ since it is the only term that remains in \eqref{eqn:iomegaapp}. Writing the Hamiltonian, we have that
\begin{align*}
    H^+ - V(x) = & \ \frac{1}{2}g\left(p-(1-c^2) \frac{p(\nabla s)}{ds(\nabla s)}ds,p-(1-c^2) \frac{p(\nabla s)}{ds(\nabla s)}ds\right) \\
    = & \ \frac{1}{2}g\left(p, p\right) - (1-c^2) \frac{p(\nabla s)}{ds(\nabla s)}g\left(p,ds\right) + \frac{1}{2} \left((1-c^2) \frac{p(\nabla s)}{ds(\nabla s)}\right)^2g\left(ds,ds\right) 
\end{align*}
and taking the derivative of $H$ as needed,
\begin{align}
    \frac{\partial }{\partial p_n}(H^+ - V(x))= & \ \frac{1}{2}g\left(\frac{\partial p}{\partial p_n}, p\right) +  \frac{1}{2}g\left(p, \frac{\partial p }{\partial p_n}\right) - \frac{1}{2} \frac{\partial p(\nabla s) }{\partial p_n} \frac{1-c^2}{ds(\nabla s)}g\left(p,ds\right) \nonumber \\
    - & \ \frac{1}{2}(1-c^2) \frac{p(\nabla s)}{ds(\nabla s)}g\left(\frac{\partial p}{\partial p_n},ds\right) + \frac{1}{2} \left(1-c^2\right)^2\frac{\partial }{\partial p_n}\left( \frac{p(\nabla s)}{ds(\nabla s)}\right)^2g\left(ds,ds\right) \nonumber \\
    = & \ \frac{1}{2}g\left(\frac{\partial p}{\partial p_n}, p\right) + \frac{1}{2}g\left(p, \frac{\partial p }{\partial p_n}\right) - \frac{1}{2} \frac{\partial p(\nabla s) }{\partial p_n} \frac{1-c^2}{ds(\nabla s)}g\left(p,ds\right) \nonumber \\
    - & \ \frac{1}{2} (1-c^2) \frac{p(\nabla s)}{ds(\nabla s)}g\left(\frac{\partial p}{\partial p_n},ds\right) + \frac{1}{2} \left(1-c^2\right)^2\frac{\partial }{\partial p_n}\left( \frac{p(\nabla s)}{ds(\nabla s)}\right)^2ds(\nabla s) \nonumber \\
    = & \ g\left(\frac{\partial p}{\partial p_n}, p\right)  -  (1-c^2) \frac{p(\nabla s)}{ds(\nabla s)}g\left(\frac{\partial p}{\partial p_n},ds\right) + \frac{(1-c^2)^2}{ds(\nabla s)}\frac{\partial \left( p(\nabla s)\right)}{\partial p_n}. \label{eqn:terms}
\end{align}
Explicitly writing out the derivatives of \eqref{eqn:terms},
\begin{align*}
    g\left(\frac{\partial p}{\partial p_n}, p\right) = & \ g^{ij}\frac{\partial p_i}{\partial p_n}p_j = g^{ij} {\delta_i}^np_j = g^{nj}p_j = v^n \\
    \frac{\partial \left( p(\nabla s)\right)}{\partial p_n} = & \  \frac{\partial \left( g^{ij}p_i\partial_j h\right)}{\partial p_n} = g^{ij}\frac{\partial p_i}{\partial p_n}\partial_j h = g^{ij}{\delta_i}^n\partial_j h = g^{nj}\partial_j h = (\nabla s)^n.
\end{align*}
Now plugging those derivatives in,
\begin{align*}
    \implies \frac{\partial H^+}{\partial p_n} = & \ v^n - ( \nabla s )^n \frac{1-c^2}{ds(\nabla s)}g\left(p,ds\right)+  \frac{(1-c^2)^2}{ds(\nabla s)}(\nabla s)^np(\nabla s) \\
     = & \ v^n - ( \nabla s )^n \frac{1-c^2}{ds(\nabla s)}g\left(p,ds\right) + \frac{1-2c^2+c^4}{ds( \nabla s)}(\nabla s )^n p(\nabla s)\\
     = & \ v^n - ( \nabla s )^n \frac{1-c^2}{ds(\nabla s)}p(\nabla s) + \frac{1-2c^2+c^4}{ds (\nabla s)}(\nabla s )^np(\nabla s) \\ 
     = & \ v^n + \frac{c^4-1}{ds (\nabla s)}(\nabla s )^np(\nabla s)
\end{align*}
Relating this back to our original equation, we find that 
\begin{align*}
    \Delta^* i_X \mu = & \ \mathcal{J}^X_\mu(\Delta)\iota^* i_X\mu \\
    \left( v^n + \frac{c^4-1}{ds (\nabla s)}(\nabla s )^np(\nabla s) \right)dx^1\wedge dp_1 \wedge \ldots \wedge \widehat{dx^n} \wedge dp_n = &\\ 
     \ \mathcal{J}^X_\mu(\Delta) &g^{nj}p_j dx^1\wedge dp_1 \wedge \ldots \wedge \widehat{dx^n} \wedge dp_n \\
    \implies \left( v^n + \frac{c^4-1}{ds (\nabla s)}(\nabla s )^np(\nabla s) \right)= & \ \mathcal{J}^X_\mu(\Delta) v^n  \\
    \implies \mathcal{J}^X_\mu(\Delta) = &  \ \frac{ v^n + \frac{c^4-1}{ds (\nabla s)}(\nabla s )^np(\nabla s)}{v^n} \\
    = & \ 1 + \frac{c^4-1}{v^n ds (\nabla s)}(\nabla s )^np(\nabla s)
\end{align*}
As previously mentioned, $ds = dx^n$ so then $ds(\nabla s) = dx^n(\nabla s) = (\nabla s)^n$. Furthermore, $p(\nabla s) = g^{ij}p_i \partial_j s = ds(v) = dx^n(v) = v^n$. Thus, we can say that 
\begin{align*}
    \mathcal{J}^X_\mu(\Delta)     = & \ 1 + \frac{c^4-1}{v^n (\nabla s)^n}(\nabla s )^n v^n \\
    = & \ 1 + c^4 - 1 \\
    = & \ c^4
\end{align*}\end{proof}
This result is the simplest of inelastic collisions, but we believe that it is likely to carry over to nonholonmic systems with inelastic collisions so that $\mathcal{J}^X_\mu(\Delta) = c^4$ in general. However, this has yet to be shown and is reserved for future works. 

\section*{Acknowledgments}
We would like to thank Mark Walth and Dr. Richard Rand for insightful conversations and inspiration for this work. We would additionally like to thank Mallory Gaspard for assistance on the numerical aspects of this project. Finally, we would like to thank Cornell University for hosting this REU and making this research possible.

\bibliographystyle{siamplain}
\bibliography{references}

\end{document}